\theoremstyle{plain}
	\newtheorem{theorem}{Theorem}[chapter]
	\newtheorem{teorema}{Teorema}[chapter]
	\newtheorem{lemma}[theorem]{Lemma}
	\newtheorem{lema}[teorema]{Lema}
	\newtheorem{proposition}[theorem]{Proposition}
	\newtheorem{proposicion}[teorema]{Proposición}
\theoremstyle{definition}
	\newtheorem{objective}[theorem]{Objective}
	\newtheorem{objetivo}[teorema]{Objetivo}
	\newtheorem{example}[theorem]{Example}
	\newtheorem{ejemplo}[teorema]{Ejemplo}
	\newtheorem{remark}[theorem]{Remark}
	\newtheorem{observacion}[teorema]{Observación}
\DeclareMathOperator{\rad}{rad}
\DeclareMathOperator{\Arf}{Arf}
\DeclareMathOperator{\supp}{supp}
\DeclareMathOperator{\Skew}{Skew}
\DeclareMathOperator{\Cl}{Cl}
\DeclareMathOperator{\sign}{sign}
\begin{document}

\selectlanguage{english}
\begin{large}
	\noindent
	Government of Aragon
	\hfill
	European Union
	
	\bigskip
	\begin{center}
		\textit{Building Europe from Aragon}
	\end{center}
	
	\vspace{\stretch{2}}
	\begin{center}
		\begin{huge}
			\phantomsection
			\textbf{Gradings~on simple~real~Lie~algebras}
			\label{title}
			
		\end{huge}
		
		\bigskip \bigskip \bigskip
		\begin{Large}
			Adri\'an Rodrigo-Escudero
			
		\end{Large}
		
		\bigskip \bigskip
		14 june 2018
	\end{center}
	
	\vspace{\stretch{3}}
	\noindent
	Doctoral thesis in mathematics at the University of Zaragoza
	\\
	Supervised by Alberto Elduque
	
\end{large}
\newpage

\selectlanguage{english}
\section*{Abstract in english}
\addcontentsline{toc}{section}{Abstract in english}

This work is a doctoral thesis in mathematics
by compendium of four articles.
Here we explain,
using a language as simple as possible,
the results achieved in those articles.
The general objective is
the classification of
gradings on simple real Lie algebras.
The text is written in both english and spanish.
This document can be downloaded,
with the identifier that appears
on the left margin of page \pageref{title},
from the following web address:
\url{https://arxiv.org}

\selectlanguage{spanish}
\section*{Resumen en español}
\addcontentsline{toc}{section}{Resumen en español}

Esta obra es una tesis doctoral en matemáticas
por compendio de cuatro artículos.
Aquí explicamos,
utilizando un lenguaje lo más sencillo posible,
los resultados alcanzados en esos artículos.
El objetivo general es
la clasificación de
las graduaciones en las álgebras de Lie reales simples.
El texto está escrito tanto en inglés como en español.
Este documento se puede descargar,
con el identificador que aparece
en el margen izquierdo de la página \pageref{title},
desde la siguiente dirección web:
\url{https://arxiv.org}

\vfill

\selectlanguage{english}
\section*{Information}
\addcontentsline{toc}{section}{Information}

2010 Mathematics Subject Classification:
primary 17B70, 16W50;
secondary 16W10, 16S35, 15A66.

Keywords:
graded algebra;
simple real Lie algebra;
division grading;
classification;
Clifford algebra.

\selectlanguage{spanish}

Adrián Rodrigo-Escudero;
Departamento de Matemáticas e
Instituto Universitario de Matemáticas y Aplicaciones;
Universidad de Zaragoza;
50009 Zaragoza;
Spain.

\texttt{adrian.rodrigo.escudero@gmail.com}

\url{https://arxiv.org/a/0000-0001-9408-6535}

\url{https://orcid.org/0000-0001-9408-6535}

\selectlanguage{english}
\chapter*{Articles}
\addcontentsline{toc}{chapter}{Articles}
\label{ch:articles}

\begin{enumerate}

\item[\cite{Rod2016}]
A. Rodrigo-Escudero,
Classification of division gradings on fi\-nite-di\-men\-sional simple real algebras,
Linear Algebra Appl. {\bf 493} (2016), 164--182.
\\
\url{https://arxiv.org/abs/1506.01552}
\\
\url{https://doi.org/10.1016/j.laa.2015.11.025}

\item[\cite{BKR2018a}]
Y. Bahturin, M. Kochetov\ and\ A. Rodrigo-Escudero,
Classification of involutions on graded-division simple real algebras,
Linear Algebra Appl. {\bf 546} (2018), 1--36.
\\
\url{https://arxiv.org/abs/1707.05526}
\\
\url{https://doi.org/10.1016/j.laa.2018.01.040}

\item[\cite{BKR2018b}]
Y. Bahturin, M. Kochetov\ and\ A. Rodrigo-Escudero,
Gradings on classical central simple real Lie algebras,
J. Algebra {\bf 506} (2018), 1--42.
\\
\url{https://arxiv.org/abs/1707.07909}
\\
\url{https://doi.org/10.1016/j.jalgebra.2018.02.036}

\item[\cite{ER2018}]
A. Elduque\ and\ A. Rodrigo-Escudero,
Clifford algebras as twisted gro\-up algebras and the Arf invariant,
Adv. Appl. Clifford Algebr. {\bf 28} (2018), no.~2, 28:41.
\\
\url{https://arxiv.org/abs/1801.07002}
\\
\url{https://doi.org/10.1007/s00006-018-0862-y}

\end{enumerate}

\selectlanguage{english}
\clearpage
\phantomsection
\addcontentsline{toc}{chapter}{Contents}
\tableofcontents

\selectlanguage{english}
\chapter*{Introduction}
\addcontentsline{toc}{chapter}{Introduction}

A lot has changed since my supervisor,
Alberto Elduque,
defended his thesis in 1984.
He did not publish any of his results
while he was a doctoral student,
since it was not clear whether
a thesis that had previously appeared in journals
was an original work or not.
So the first thing that he had to do
after the defense
was to adapt the chapters of his thesis to the article format.

Today the situation is totally different,
and publishing is fundamental in a doctorate.
The research that we have carried out
since I started the doctorate in september 2013
has already appeared in the four articles
listed on page \pageref{ch:articles}.
Now what is not clear to us is
what the text of a thesis should contain.
Naturally when we wrote the articles
we did it in the best way that we knew,
without omitting any detail and including
key points, motivation, historical references,
necessary background, et cetera.
We hope that the introduction that appears in each article serves as a guide
to know what its objectives are
and where to find the main theorems.
My intention is not to repeat in this thesis
what we have already exposed in other places,
and in order to do so my friend Eva gave me
an alternative idea of what I could write.

The objective of this text is to explain,
using a language as simple as possible,
the results achieved during these five years.
I think that at least chapter \ref{ch:art_1},
corresponding to the first article \cite{Rod2016},
can be understood even by someone
who has not studied a degree in mathematics.
This is something exceptional in the mathematical research,
in fact both the second article \cite{BKR2018a}
and specially the third \cite{BKR2018b}
are much more technical.
On the other hand chapter \ref{ch:art_4},
corresponding to the fourth article \cite{ER2018},
addresses a mathematical audience,
but not necessarily specialized in algebra.

The general purpose of my thesis is to classify
gradings on simple real Lie algebras.
A grading is simply
a decomposition of an algebra
compatible with its operations
of sum, product and multiplication by scalars.
For a precise statement of the results achieved,
we refer to either the objectives \ref{obj_en:art_1},
\ref{obj_en:art_2}, \ref{obj_en:art_3} and \ref{obj_en:art_4},
or,
for a more detailed summary,
to the sections that contain them
(sections \ref{sect:art_1_mot_obj}, \ref{sect:art_2_obj},
\ref{sect:art_3_obj} and \ref{sect:art_4_intr}).
Besides,
section \ref{sect:state_art} complements this summary.
In the first and second article
we classify gradings on associative algebras,
with the intention of applying these results
to Lie algebras in the third article,
as we explain in section \ref{sect:alg_inv}.
In any case remark that each of these classifications
is of independent interest.
Finally in the fourth article we change to a more relaxed topic,
since we use results obtained in the previous articles
to give alternative proofs to already known theorems.

\selectlanguage{english}
\chapter{Background}

\section{Algebras and groups}

An algebra is a set endowed with
a structure of vector space and a bilinear product.
For example,
the set $M_2(\mathbb{R})$ of
the square matrices of size $ 2 \times 2 $
whose entries are real numbers,
with the usual operations of sum and product of matrices
and of multiplication of a scalar by a matrix,
is an algebra,
which also satisfies the following properties.
It is associative,
since $ (XY)Z = X(YZ) $ for all matrices
$X,Y,Z$ in $M_2(\mathbb{R})$;
it is real,
because in this case the scalar field
is the real numbers $\mathbb{R}$;
it has dimension $4$;
and it is unital,
since there exists a neutral element for the product,
the identity matrix $I$.

Another example is the algebra of quaternions $\mathbb{H}$
\cite{Ham1844},
let us define it step by step.
First we fix a real vector space $\mathbb{H}$ of dimension $4$,
and we call $ \{ 1,i,j,k \} $ the elements of one of its basis.
This can be expressed by the following formula:
\begin{equation}\label{eq:quaternions}
\mathbb{H} = \mathbb{R} 1 \oplus \mathbb{R} i
\oplus \mathbb{R} j \oplus \mathbb{R} k
\end{equation}
$\mathbb{R}1$, $\mathbb{R}i$, $\mathbb{R}j$ and $\mathbb{R}k$
are vector subspaces of dimension $1$,
and with equation \eqref{eq:quaternions} we indicate that
their sum is direct and that it is the whole $\mathbb{H}$.
The product is defined in the elements of the basis
by means of the multiplication table of figure \ref{fig_en:mult_table_H},
and is extended to all quaternions by bilinearity.
For instance,
$ (1+2i)(3i+j) = 3i+j+6i^2+2ij = -6+3i+j+2k $.

\begin{figure}
\begin{equation*}
\begin{array}{c|cccc}
	\cdot & 1 & i & j & k \\
	\hline
	1 & 1 & i & j & k \\
	i & i & -1 & k & -j \\
	j & j & -k & -1 & i \\
	k & k & j & -i & -1
\end{array}
\end{equation*}
\caption{Multiplication table of quaternions $\mathbb{H}$.}
\label{fig_en:mult_table_H}
\end{figure}

We could check,
by analyzing $ 4 \cdot 4 \cdot 4 = 64 $ cases,
that the product of quaternions is associative,
but fortunately there is a shorter way to prove it.
We consider the following matrices in $M_2(\mathbb{C})$:
\begin{equation}\label{eq:quatern_in_M2C}
I = \begin{pmatrix} 1 & 0 \\ 0 & 1 \end{pmatrix}
\quad
A_i = \begin{pmatrix} i & 0 \\ 0 & -i \end{pmatrix}
\quad
A_j = \begin{pmatrix} 0 & 1 \\ -1 & 0 \end{pmatrix}
\quad
A_k = \begin{pmatrix} 0 & i \\ i & 0 \end{pmatrix}
\end{equation}
We observe that the multiplication table of these four matrices
is the same as that of figure \ref{fig_en:mult_table_H},
for instance $ A_i A_j = A_k $ and $ A_i^2 = -I $.
Since the product of matrices is associative,
so it has to be that of quaternions.

Remark that the algebra of quaternions $\mathbb{H}$
is not only unital,
but it is also a division algebra,
because given a nonzero quaternion $ X = a+bi+cj+dk $,
there exists a quaternion $X^{-1}$ such that $ XX^{-1} = 1 = X^{-1}X $.
Specifically,
$X^{-1}$ is determined by the following formula:
\begin{equation}
(a+bi+cj+dk)^{-1} = \frac{1}{a^2+b^2+c^2+d^2} \, (a-bi-cj-dk)
\end{equation}

An algebra is said to be simple if
it does not contain any proper two-sided ideal and its product is nontrivial.
Later on we will give characterizations of simple algebras
that will be more manageable for us.
Just keep in mind that these algebras are very important,
because in some sense they are the indivisible atoms
from which a great amount of algebras are built.

Finally let us recall that
a group is a set endowed with a binary operation such that:
it is associative,
there exists a neutral element $e$,
and every element has an inverse.
If besides the operation is commutative,
the group is said to be abelian.
For example,
figure \ref{fig_en:mult_table_Z4_Z22} shows
the multiplication tables of two abelian groups of $4$ elements.
The first one is
$ \mathbb{Z}_4 = \langle a \mid a^4 = e \rangle $,
and the second one is
$ \mathbb{Z}_2^2 = \langle a,b \mid a^2=e=b^2 , \allowbreak \, ab=ba \rangle $.

\begin{figure}
\begin{equation*}
\begin{array}{c|cccc}
	\cdot & e & a & a^2 & a^3 \\
	\hline
	e & e & a & a^2 & a^3 \\
	a & a & a^2 & a^3 & e \\
	a^2 & a^2 & a^3 & e & a \\
	a^3 & a^3 & e & a & a^2
\end{array}
\qquad \qquad
\begin{array}{c|cccc}
	\cdot & e & a & b & ab \\
	\hline
	e & e & a & b & ab \\
	a & a & e & ab & b \\
	b & b & ab & e & a \\
	ab & ab & b & a & e
\end{array}
\end{equation*}
\caption{On the left the multiplication table of the group $\mathbb{Z}_4$,
on the right that of the group $\mathbb{Z}_2^2$.}
\label{fig_en:mult_table_Z4_Z22}
\end{figure}

\section{Classifications}

As we have mentioned in the introduction,
the objective of my thesis is to classify gradings,
but in order to make a classification
the first thing that has to be clear is
when we understand that two objects are equal.
This question is not at all trivial.

For instance,
let us consider the complex algebra $\mathbb{H}_{\mathbb{C}}$,
obtained by extending scalars
from the field $\mathbb{R}$ to the field $\mathbb{C}$
in the real algebra of quaternions $\mathbb{H}$.
That is,
the elements $X=a+bi+cj+dk$ of $\mathbb{H}_{\mathbb{C}}$
continue to satisfy the multiplication table
of figure \ref{fig_en:mult_table_H},
but in this case the coefficients $a$, $b$, $c$, $d$
are complex numbers instead of real numbers.
Now we compare the algebras $\mathbb{H}_{\mathbb{C}}$ and $M_2(\mathbb{C})$.
Although both of them are complex associative algebras of dimension $4$,
in principle they seem quite different.
However let us define a map
$ f : \mathbb{H}_{\mathbb{C}} \to M_2(\mathbb{C}) $
as $ f(a+bi+cj+dk) = a I + b A_i + c A_j + d A_k $,
where $I$, $A_i$, $A_j$ and $A_k$ are the complex matrices
of equation \eqref{eq:quatern_in_M2C}.
We observe that this map is bijective
and commutes with the operations of both complex algebras:
$ f(X+Y) = f(X)+f(Y) $,
$ f( \lambda X ) = \lambda f(X) $,
and $ f(XY) = f(X)f(Y) $
for all elements $X,Y$ in $\mathbb{H}_{\mathbb{C}}$
and all scalar $\lambda$ in $\mathbb{C}$.
Therefore we can state that
$\mathbb{H}_{\mathbb{C}}$ and $M_2(\mathbb{C})$
are the same algebra,
but written with different alphabets,
and that the map $f$ is a dictionary
that allows us to translate from one to the other.
Formally it is said that the complex algebras
$\mathbb{H}_{\mathbb{C}}$ and $M_2(\mathbb{C})$ are isomorphic,
written $ \mathbb{H}_{\mathbb{C}} \cong M_2(\mathbb{C}) $,
and that $f$ is an isomorphism.

On the other hand,
the real algebras $\mathbb{H}$ and $M_2(\mathbb{R})$ are not isomorphic,
since the first one is a division algebra but the second one is not.
Indeed,
the matrices whose determinant is $0$ are not invertible.

Let us come back to the groups $\mathbb{Z}_4$ and $\mathbb{Z}_2^2$
of figure \ref{fig_en:mult_table_Z4_Z22}.
Although both of them are abelian groups of $4$ elements,
they are not isomorphic,
because in $\mathbb{Z}_2^2$
the square of any element is the neutral element $e$,
whereas in $\mathbb{Z}_4$
there are elements of order $4$ ($a$ and $a^3$).
With some patience it can be checked that
any group of $4$ elements is isomorphic to one of these two groups.

A very illustrative example of classification is that of the
simple finite-dimensional real associative algebras.
It consists of two parts.
On the one hand Frobenius classifies in \cite{Fro1878} the
division finite-dimensional real associative algebras.
It turns out that, up to isomorphism, there are only three:
the field of real numbers $\mathbb{R}$,
the field of complex numbers $\mathbb{C}$,
and the algebra of quaternions $\mathbb{H}$.
The easiest part of the classification is to check that indeed
these three algebras satisfy the required properties of
associativity, divisibility, et cetera.
The true problem is to prove that
there are no more algebras with these characteristics,
hence this result is known as Frobenius theorem.
Finally it has to be checked that there are no redundancies;
in this case we can say that the three algebras are not isomorphic
since they have different dimension:
$1$, $2$ and $4$ respectively.

On the other hand,
the theorem of Wedderburn \cite{Wed1908} and Artin \cite{Art1927}
says that any
simple finite-dimensional associative algebra
can be written as an algebra of square matrices
with entries in a division algebra.
Besides this result is really a classification,
since it also states
when two of these matrix algebras are isomorphic.
This happens if and only if the size of the matrices coincide
and the corresponding division algebras are isomorphic.
Putting together the two theorems we can give the list,
exhaustive and without repetitions,
of the simple finite-dimensional real associative algebras:
\begin{equation}\label{eq:list_simple_alg}
\begin{array}{cccc}
	\mathbb{R} , & M_2(\mathbb{R}) , & M_3(\mathbb{R}) , & \dots \\
	\mathbb{C} , & M_2(\mathbb{C}) , & M_3(\mathbb{C}) , & \dots \\
	\mathbb{H} , & M_2(\mathbb{H}) , & M_3(\mathbb{H}) , & \dots
\end{array}
\end{equation}

\section{Gradings}

Algebras by themselves have been studied thoroughly for a long time.
The purpose of gradings is to go one step further
and investigate in which ways algebras can be decomposed.
For a decomposition to be valid,
we are going to require it to be compatible with the operations of the algebra.

The precise definition of grading
on an algebra $\mathcal{D}$ by a group $G$
consists of two parts.
First,
the grading as such is simply a decomposition
of the underlying vector space of $\mathcal{D}$
into a direct sum of vector subspaces,
so that every vector subspace
is indexed by an element of the group $G$.
This is expressed with mathematical notation in the following way:
\begin{equation}\label{eq:grad_cond_sum}
\mathcal{D} = \bigoplus_{ g \in G } \mathcal{D}_g
\end{equation}
The vector subspace $\mathcal{D}_g$ is called
the homogeneous component of degree $g$,
and its vectors $ X \in \mathcal{D}_g $ are said to be
homogeneous elements of degree $g$,
written $ \deg X = g $.
Equation \eqref{eq:grad_cond_sum} means that the decomposition
is compatible with the sum and with the multiplication by scalars;
but if $\mathcal{D}$ is a graded algebra,
the product also has to be respected.
Thus,
as a second condition,
the product
of a homogeneous element of degree $g$
by a homogeneous element of degree $h$
is required to be a homogeneous element of degree $gh$.
If we write this with symbols,
we would say that for all $g,h$ in $G$
the following formula has to be satisfied:
\begin{equation}\label{eq:grad_cond_prod}
\mathcal{D}_g \mathcal{D}_h \subseteq \mathcal{D}_{gh}
\end{equation}

\begin{example}\label{exam:grad_M2R_dim1}
We can decompose the aforementioned algebra $M_2(\mathbb{R})$
into a direct sum of four vector subspaces of dimension $1$
by means of the following equation:
\begin{equation}\label{eq:grad_M2R_dim1}
M_2(\mathbb{R}) =
	\mathbb{R} \begin{pmatrix} 1 & 0 \\ 0 & 1 \end{pmatrix}
	\oplus
	\mathbb{R} \begin{pmatrix} -1 & 0 \\ 0 & 1 \end{pmatrix}
	\oplus
	\mathbb{R} \begin{pmatrix} 0 & 1 \\ 1 & 0 \end{pmatrix}
	\oplus
	\mathbb{R} \begin{pmatrix} 0 & -1 \\ 1 & 0 \end{pmatrix}
\end{equation}
If we assign to these four subspaces
the degrees $e$, $a$, $b$, $ab$ in the group
$ \mathbb{Z}_2^2 = \langle a,b \mid a^2=e=b^2 , \allowbreak \, ab=ba \rangle $,
we obtain a grading.
Indeed,
analyzing the $ 4 \cdot 4 = 16 $ possible cases,
we see that equation \eqref{eq:grad_cond_prod} is satisfied.
\end{example}

\begin{example}\label{exam:grad_H_dim1}
The own definition of the quaternions $\mathbb{H}$
suggests a grading.
Again we assign degrees in the group $\mathbb{Z}_2^2$,
this time to the four subspaces of equation \eqref{eq:quaternions},
$ \mathbb{H} = \mathbb{R} 1 \oplus \mathbb{R} i
\oplus \mathbb{R} j \oplus \mathbb{R} k $.
If we compare the multiplication table of $\mathbb{H}$
(figure \ref{fig_en:mult_table_H})
with that of $\mathbb{Z}_2^2$
(figure \ref{fig_en:mult_table_Z4_Z22}),
we check that the product is respected.
\end{example}

\begin{example}\label{exam:grad_C_dim1}
Another natural grading is the one obtained on the complex numbers
by separating into real part and imaginary part:
\begin{equation}\label{eq:grad_C_dim1}
\mathbb{C} = \mathbb{R} 1 \oplus \mathbb{R} i
\end{equation}
In this case the grading group is
$ \mathbb{Z}_2 = \langle a \mid a^2=e \rangle $.
\end{example}

Let us point out a mathematical nuance.
Formally,
the definition of grading allows
some of the homogeneous components to have dimension $0$.
For this reason we define the support of a grading,
which is the set of elements of the grading group
whose homogeneous components are nonzero.

Given two gradings on the same algebra,
it may occur that the first one is a refinement of the second one,
or in other words,
that the second one is a coarsening of the first one.
This happens when every homogeneous component of the first grading
is contained in a homogeneous component of the second grading.

\begin{example}\label{exam:grad_M2R_dim2}
A coarsening of the grading
on the algebra $M_2(\mathbb{R})$
of example \ref{exam:grad_M2R_dim1}
is the grading by the group $\mathbb{Z}_2$
defined by the following equation:
\begin{equation}\label{eq:grad_M2R_dim2}
M_2(\mathbb{R}) =
\left[
	\mathbb{R} \begin{pmatrix} 1 & 0 \\ 0 & 1 \end{pmatrix}
	\oplus
	\mathbb{R} \begin{pmatrix} 0 & -1 \\ 1 & 0 \end{pmatrix}
\right]
	\oplus
\left[
	\mathbb{R} \begin{pmatrix} 0 & 1 \\ 1 & 0 \end{pmatrix}
	\oplus
	\mathbb{R} \begin{pmatrix} -1 & 0 \\ 0 & 1 \end{pmatrix}
\right]
\end{equation}
We observe that in this case there are
two homogeneous components of dimension $2$ each one.
\end{example}

The gradings of examples
\ref{exam:grad_M2R_dim1}, \ref{exam:grad_H_dim1} and \ref{exam:grad_C_dim1}
have all their homogeneous components of dimension $1$,
hence they cannot be refined any more.
Because of this last reason they are said to be fine.
Reciprocally the trivial grading,
which includes the whole algebra in a single homogeneous component,
is always the coarsest.

A graded unital associative algebra
is said to be a graded division algebra
if every nonzero homogeneous element has an inverse.
For instance,
the gradings of examples
\ref{exam:grad_M2R_dim1}, \ref{exam:grad_H_dim1},
\ref{exam:grad_C_dim1} and \ref{exam:grad_M2R_dim2}
are division gradings.

Recall that our objective is going to be to classify gradings.
Therefore,
given two graded algebras
$ \mathcal{D} = \bigoplus_{ g \in G } \mathcal{D}_g $ and
$ \mathcal{E} = \bigoplus_{ h \in H } \mathcal{E}_h $,
it has to be clear if we consider that they are equal or not.
There are two natural ways to define this equality,
depending on whether the grading group plays a secondary role
or it is a part of the definition,
so in order to prevent confusions
we will call one equivalence and the other isomorphism.
We say that
the graded algebras $\mathcal{D}$ and $\mathcal{E}$ are equivalent
if there exists an isomorphism of algebras
$ \psi : \mathcal{D} \to \mathcal{E} $
satisfying the following condition:
for all $g$ in the support of $\mathcal{D}$
there exists an $ h \in H $ such that
$ \psi(\mathcal{D}_g) = \mathcal{E}_h $.
If $G=H$,
we can strengthen the required condition to
$ \psi(\mathcal{D}_g) = \mathcal{E}_g $
for all $ g \in G $,
and then we say that
the graded algebras $\mathcal{D}$ and $\mathcal{E}$ are isomorphic.

\section{Tensor products and quadratic forms}

Finally let us review some more concepts that are going to appear.
We start with the graded tensor product.
We recall that,
if $ \{ X_1 , \allowbreak X_2 , \allowbreak \dots , \allowbreak X_r \} $
is a basis of a vector space $\mathcal{D}$
and $ \{ Y_1 , \allowbreak Y_2 , \allowbreak \dots , \allowbreak Y_s \} $
is a basis of another vector space $\mathcal{E}$,
then the tensor product $ \mathcal{D} \otimes \mathcal{E} $
is a vector space of which a basis is
$ \{ X_i \otimes Y_j \mid 1 \leq i \leq r , \allowbreak \, 1 \leq j \leq s \} $.
Note that the scalar field
plays an important role in this construction,
so we will indicate it with a subscript of the symbol $\otimes$
when it is different from the real numbers $\mathbb{R}$.
For instance,
the real vector space $ \mathbb{C} \otimes M_2(\mathbb{C}) $
has dimension $ 2 \cdot 8 = 16 $,
whereas the complex vector space
$ \mathbb{C} \otimes_{\mathbb{C}} M_2(\mathbb{C}) $
has dimension $ 1 \cdot 4 = 4 $.

If $\mathcal{D}$ and $\mathcal{E}$
are not only vector spaces
but algebras,
then $ \mathcal{D} \otimes \mathcal{E} $
also has an algebra structure,
with the componentwise product:
$ ( X \otimes Y ) ( X' \otimes Y' ) = (XX') \otimes (YY') $.
Therefore,
if the algebra $\mathcal{D}$ is graded by the group $G$
and the algebra $\mathcal{E}$ is graded by the group $H$,
it is natural that we define a grading
on the algebra $ \mathcal{D} \otimes \mathcal{E} $
by the group $ G \times H $
by saying that the homogeneous component of degree $(g,h)$
is simply $ \mathcal{D}_g \otimes \mathcal{E}_h $.

Let us now move to the topic of quadratic forms.
An ($\mathbb{R}$-valued) alternating bicharacter on a group $T$
is a map $ \beta : T \times T \to \mathbb{R} \setminus \{ 0 \} $
that satisfies that
$ \beta(uv,w) = \beta(u,w) \beta(v,w) $,
$ \beta(u,vw) = \beta(u,v) \beta(u,w) $,
and $ \beta(u,u) = 1 $
for all $ u,v,w \in T $.
If the group $T$ is finite,
then $\beta$ can only take the values $+1$ and $-1$.

The radical of $\beta$ is the set
$ \rad(\beta) =
\{ t \in T \mid \beta(u,t) = 1
\allowbreak \: \text{for} \allowbreak \: \text{all} \allowbreak \:
u \in T \} $.
We will say that $\beta$ is of type I
if the only element of its radical is the neutral element of the group,
$e$.
Analogously,
we will say that $\beta$ is of type II
if $\rad(\beta)$ has two elements;
in this case the element of the radical
that is not the neutral element
will be denoted $f_{\beta}$.

A quadratic form on $T$
is a map $ \mu : T \to \{ \pm 1 \} $
such that $\beta_{\mu}$ is an alternating bicharacter,
where $ \beta_{\mu} : T \times T \to \{ \pm 1 \} $ is the map,
called polarization of $\mu$,
defined by the following formula:
\begin{equation}\label{eq:polarization}
\beta_{\mu}(u,v) = \mu(uv) \mu(u)^{-1} \mu(v)^{-1}
\end{equation}

Let us remark that quadratic forms
are well-known objects in mathematics.
They are usually written with additive notation,
as in equation \eqref{eq:bil_form_quadr_form},
but in this text we have preferred to use multiplicative notation,
because it will be more convenient for our purposes.
Note also that the inverses that appear in
equation \eqref{eq:polarization}
have no effect,
but this is how this formula is usually written.

The Arf invariant of a map
$ \mu : T \to \{ \pm 1 \} $
defined on a finite set $T$
is the value which is assumed most often by the map.
That is,
if there are more elements in $T$ of value $+1$ than $-1$,
then the Arf invariant of $\mu$ is $+1$.
But if there are less,
then it is $-1$.
It can also occur that there is the same number,
and then we say that the Arf invariant is $0$.

\selectlanguage{english}
\chapter{First article}\label{ch:art_1}

\begin{enumerate}

\item[\cite{Rod2016}]
A. Rodrigo-Escudero,
Classification of division gradings on fi\-nite-di\-men\-sional simple real algebras,
Linear Algebra Appl. {\bf 493} (2016), 164--182.
\\
\url{https://arxiv.org/abs/1506.01552}
\\
\url{https://doi.org/10.1016/j.laa.2015.11.025}

\end{enumerate}

\section{State of the art}\label{sect:state_art}

Just before I started my thesis my supervisor,
Alberto Elduque,
and one of his main collaborators,
Mikhail Kochetov,
published the monograph \cite{EK2013}.
This has been very fortunate for me.
On the one hand because it accelerated considerably
my bibliographic search task,
which is the previous step to the actual research work.
In fact this book,
and the references that it contains,
are the recommendable starting point for anyone
who is going to begin working with gradings.
It summarizes and improves the works of many authors,
as Patera, Zassenhaus, Bahturin, Zaicev, Draper,
or Mart\'in Gonz\'alez
(\cite{PZ1989}, \cite{HPP1998},
\cite{BSZ2001}, \cite{BZ2002}, \cite{BSZ2005}, \cite{BZ2006}, \cite{BZ2007},
\cite{DM2006}, \cite{DM2009},
\cite{BK2010}, \cite{Eld2010},
et cetera).

And on the other hand because one of the themes of the monograph
is gradings on complex algebras,
so the classification problems that are tackled in
my first three articles \cite{Rod2016}, \cite{BKR2018a} and \cite{BKR2018b}
were already solved in \cite{EK2013}
for the case in which the scalar field
is the complex numbers instead of the real numbers.
The first approach to attack those classifications of gradings
has been to try to adapt the techniques from the complex case to the real case.

One of the results that was already proved
and on which my thesis relies
is the graded analogue of the Artin--Wedderburn theorem.
It appears in \cite[corollary 2.12]{EK2013}
in the most general version that is possible,
although it had already been studied in
\cite{BSZ2001}, \cite{BZ2002}, and even \cite{NO1982}.
Now we are going to state it in the particular case
of simple algebras of finite dimension.

Suppose that we have a graded division algebra $\mathcal{D}$
by an abelian group $G$,
and that $ g_1 , g_2 , \dots , g_k $ are $k$ elements
(not necessarily different) of $G$.
With these ingredients we can construct in a natural way
a $G$-graded algebra;
we take as underlying algebra the algebra $M_k(\mathcal{D})$
of $ k \times k $ matrices with entries in $\mathcal{D}$,
and we define the grading by saying that
the matrix whose entries are all zero,
except the entry $(i,j)$ which is $ d \in \mathcal{D}_t $,
has degree $ g_i t g_j^{-1} $.
The theorem tells us that any
$G$-graded simple finite-dimensional associative algebra
is isomorphic to one of the form $M_k(\mathcal{D})$.

Besides,
this result is really a classification,
because it also tells us
when two of these $G$-graded algebras are isomorphic.
In this text we are not going to state the isomorphism condition,
because it requires a more formal notation.
On the other hand,
the classification up to equivalence is a difficult problem,
which is known only in the case of fine gradings.

What matters to us at this moment is that,
in order to complete the classification up to isomorphism of the gradings
on the simple finite-dimensional associative algebras,
a graded analogue of the theorem of Frobenius has to be proved.
That is, the graded division algebras have to be classified.
This question is solved in \cite[theorem 2.15]{EK2013}
(see also \cite{BSZ2001} and \cite{BK2010})
in the case in which the scalar field is the complex numbers;
but the real case was an open problem at the beginning of my thesis.

\section{Motivation and objective}\label{sect:art_1_mot_obj}

In autumn of 2014 my supervisor poses to me
my first research problem.
The question is to find all the gradings
(by abelian groups)
on the real algebra $M_2(\mathbb{C})$.
We recall that a grading is simply
a decomposition of the algebra
compatible with its operations
of sum, product and multiplication by scalars.
It turns out that as a complex algebra of dimension $4$
the gradings of $M_2(\mathbb{C})$ were known,
but not as a real algebra of dimension $8$.
As we have already seen,
thanks to the graded analogue of the theorem of Artin--Wedderburn,
the question is reduced to searching those gradings that are division gradings.
The first discovery was the grading of the following example.

\begin{example}\label{exam:grad_M2C_dim1}
Figure \ref{fig_en:grad_M2C_dim1} shows us
a division grading on the real algebra $M_2(\mathbb{C})$,
in which the grading group is
$ \mathbb{Z}_2 \times \mathbb{Z}_4 =
\langle a,b \mid a^2=e=b^4 , \allowbreak \, ab=ba \rangle $
and the degrees are assigned in the following order:
$e$,~$a$; $b$,~$ab$; $b^2$,~$ab^2$; $b^3$,~$ab^3$.
\begin{figure}
\begin{align*}
M_2(\mathbb{C}) = {} &
	\mathbb{R} \begin{pmatrix} 1 & 0 \\ 0 & 1 \end{pmatrix}
	\oplus
	\mathbb{R} \begin{pmatrix} 0 & 1 \\ 1 & 0 \end{pmatrix}
	\oplus
\\ &
	\mathbb{R} \begin{pmatrix} 1+i & 0 \\ 0 & -1-i \end{pmatrix}
	\oplus
	\mathbb{R} \begin{pmatrix} 0 & -1-i \\ 1+i & 0 \end{pmatrix}
	\oplus
\\ &
	\mathbb{R} \begin{pmatrix} i & 0 \\ 0 & i \end{pmatrix}
	\oplus
	\mathbb{R} \begin{pmatrix} 0 & i \\ i & 0 \end{pmatrix}
	\oplus
\\ &
	\mathbb{R} \begin{pmatrix} -1+i & 0 \\ 0 & 1-i \end{pmatrix}
	\oplus
	\mathbb{R} \begin{pmatrix} 0 & 1-i \\ -1+i & 0 \end{pmatrix}
\end{align*}
\caption{Division grading of example \ref{exam:grad_M2C_dim1}.}
\label{fig_en:grad_M2C_dim1}
\end{figure}
\end{example}

Remark that the presence of a $\mathbb{Z}_4$ factor
in the grading group of example \ref{exam:grad_M2C_dim1}
is something exceptional.
Using similar arguments
to those that had already been employed to solve the complex case,
and which later on we will explain with some detail,
we found all the possible division gradings on $M_2(\mathbb{C})$
whose homogeneous components have dimension $1$.

The problem was the gradings
with components of dimension greater than $1$.
After some computations,
we obtained a list that we expected it contained all of them.
And indeed,
we proved that there are no more gradings,
by means of an argument that relies on the theorem of Cayley--Hamilton.
The bad news is that that proof
could not be generalized to the case of matrices
of size greater than $ 2 \times 2 $.
However,
knowing what happens in the cases of low dimension is always useful,
and in this occasion it allowed us to observe that
all these gradings were coarsenings of
those whose homogeneous components have dimension $1$.
This observation became a conjecture and,
with some ingenuity,
the following result.

\begin{theorem}\label{th:coars}
If a division grading by an abelian group
on a fi\-nite-di\-men\-sional real associative algebra is fine,
then either all the homogeneous components have dimension $1$,
or it is a grading as a complex algebra.
\end{theorem}

This theorem was fundamental,
because thanks to it
we thought that we could solve
not only the case of $ 2 \times 2 $ matrices,
but also that of matrices of any size.
It is at this moment that
we proposed as feasible the following objective,
which would be the purpose of
my first article \cite{Rod2016}.

\begin{objective}\label{obj_en:art_1}
We classify,
up to isomorphism and up to equivalence,
division gradings
(by abelian groups)
on simple finite-dimensional real associative algebras.
\end{objective}

As we have mentioned previously,
this is a graded analogue of the theorem of Frobenius,
and when we add it to the graded version of the theorem of Artin--Wedderburn,
we obtain the classification up to isomorphism
of (not necessarily division) gradings
on simple finite-dimensional real associative algebras,
$M_n(\mathbb{R})$, $M_n(\mathbb{C})$ and $M_n(\mathbb{H})$.
On the other hand,
another reason that we kept in mind
to classify gradings on associative algebras
was to use these results to do the same classification on Lie algebras.
But we will return to this question
when we speak about the third article \cite{BKR2018b}
in chapter \ref{ch:art_3}.

Our strategy in order to carry out the classification
will be to separate the problem into three cases,
according to the following remark,
which is a consequence of the theorem of Frobenius.

\begin{remark}\label{rem:dim_comp}
The homogeneous components
of a division grading
on a finite-dimensional real associative algebra
have all the same dimension,
which is equal to $1$, $2$ or $4$,
depending on whether the neutral component is isomorphic to
$\mathbb{R}$, $\mathbb{C}$ or $\mathbb{H}$,
respectively.
\end{remark}

\section{Dimension one}

Let us tackle the classification
of division gradings
on simple fi\-nite-di\-men\-sional real associative algebras.
First we consider the case in which
the homogeneous components have dimension $1$,
according to remark \ref{rem:dim_comp}.

We start by analyzing the graded division algebra $\mathcal{D}$
of example \ref{exam:grad_M2R_dim1}.
The underlying algebra is $M_2(\mathbb{R})$
and the support is $ T = \mathbb{Z}_2^2 = \{ e,a,b,ab \} $.
Call $X_a$ the second matrix that appears
in equation \eqref{eq:grad_M2R_dim1},
and $X_b$ the third one.
Thus,
that equation is written in the following way:
\begin{equation}\label{eq:exam_grad_class_dim1}
M_2(\mathbb{R}) = \mathbb{R} I \oplus \mathbb{R} X_a
\oplus \mathbb{R} X_b \oplus \mathbb{R} X_a X_b
\end{equation}
We also note that $X_a$ and $X_b$ satisfy the following relations:
\begin{equation}\label{eq:exam_rel_class_dim1}
X_a^2 = + I \qquad X_b^2 = + I \qquad X_a X_b = - X_b X_a
\end{equation}
The idea is that equations
\eqref{eq:exam_grad_class_dim1} and \eqref{eq:exam_rel_class_dim1}
completely determine the graded algebra $\mathcal{D}$.
For instance,
$ ( X_a + 3 X_a X_b ) ( X_a + 2 X_b ) =
X_a^2 + 2 X_a X_b + 3 X_a X_b X_a + 6 X_a X_b^2 =
I + 6 X_a - 3 X_b + 2 X_a X_b $.

The three relations
of equation \eqref{eq:exam_rel_class_dim1}
are enough,
but a problem arises.
Suppose that we had chosen the fourth matrix,
$X_{ab}$,
instead of the third one,
then the second relation would be different:
$ X_a^2 = + I $,
$ X_{ab}^2 = - I $,
$ X_a X_{ab} = - X_{ab} X_a $.
However it is clear that these new relations
define the same graded algebra.
The solution in order to avoid redundancies
will be to keep all relations.

We define the map
$ \beta : T \times T \to \mathbb{R} \setminus \{ 0 \} $
by means of the commutation relations of the homogeneous elements
($ X_e = I $):
\begin{equation}\label{eq:comm_rel}
X_u X_v = \beta(u,v) X_v X_u
\end{equation}
We also define the map $ \mu : T \to \{ \pm 1 \} $
by means of the signs of the squares of the homogeneous elements:
\begin{equation}\label{eq:sign_squar}
X_t^2 = \mu(t) I
\end{equation}
With a small computation we check that
$\beta$ is an alternating bicharacter of type I,
and that $\mu$ is a quadratic form
whose polarization $\beta_{\mu}$ is precisely $\beta$.
Besides,
these maps $\beta$ and $\mu$ are the invariants that characterize
the graded algebra $\mathcal{D}$ up to isomorphism.
This means that,
first,
$\beta$ and $\mu$ do not depend on the choice of the $X_t$,
and second,
if we repeat this construction
starting with another graded algebra $\mathcal{D}'$,
obtaining $\beta'$ and $\mu'$,
then $ \beta = \beta' $ and $ \mu = \mu' $ if and only if
$\mathcal{D}$ is isomorphic to $\mathcal{D}'$.
In fact,
since $ \beta_{\mu} = \beta $,
all the information is contained in the quadratic form $\mu$.

These arguments are still valid in the general case
in which the underlying algebra of $\mathcal{D}$
is $M_n(\mathbb{R})$ or $M_{n/2}(\mathbb{H})$.
Since $\beta$ is of type I,
the theory of alternating bicharacters tells us that
the group $T$ is isomorphic to $\mathbb{Z}_2^{2m}$.
Therefore there are $2^{2m}$ homogeneous components of dimension $1$,
and we already know that the dimension of $\mathcal{D}$ is $n^2$,
hence necessarily $ n = 2^m $.
That is,
these division gradings exist only if
the size of the matrices is a power of $2$.

Now let us consider the classification up to equivalence.
Two of these graded algebras are equivalent if and only if
their corresponding quadratic forms are equivalent,
that is,
if after renaming the elements of $T$,
both quadratic forms are equal.
The classification of quadratic forms with polarization of type I
is well known (the idea is to take a symplectic basis);
they are divided into two equivalence classes,
one corresponding to the quadratic forms whose Arf invariant is $+1$,
and another corresponding to those of Arf invariant $-1$.

On the other hand we have to prove,
in the general case $ n = 2^m $,
the existence of these division gradings.
We consider the following graded tensor product,
where the grading of each factor
is that of example \ref{exam:grad_M2R_dim1}:
\begin{equation}
M_n(\mathbb{R}) \cong M_2(\mathbb{R}) \otimes
\dots \otimes M_2(\mathbb{R})
\end{equation}
It is,
indeed,
a graded division algebra
with homogeneous components of dimension $1$,
and its corresponding quadratic form has Arf invariant $+1$.
Therefore all other quadratic forms
on $ (\mathbb{Z}_2^2)^m \cong \mathbb{Z}_2^{2m} $
with polarization of type I and Arf invariant $+1$
are also obtained from division gradings on $M_n(\mathbb{R})$,
because they are equivalent to the one that we have just constructed.
Analogously,
thanks to this other tensor product
whose factor $\mathbb{H}$ is graded
as in example \ref{exam:grad_H_dim1},
we see that those with Arf invariant $-1$
come from division gradings on $M_{n/2}(\mathbb{H})$:
\begin{equation}
M_{n/2}(\mathbb{H}) \cong M_2(\mathbb{R}) \otimes
\dots \otimes M_2(\mathbb{R}) \otimes \mathbb{H}
\end{equation}

We can already state the classification up to equivalence,
and the classification up to isomorphism,
for instance in the case of quaternion matrices.
If the underlying algebra is $M_n(\mathbb{C})$
the reasoning is similar,
but the alternating bicharacters that are obtained have type II;
this causes a second equivalence class to appear,
in which the arguments are a bit more complicated
because the grading group has elements of order $4$.

\begin{theorem}\label{th:art_one_dim1_equiv}
Any division grading,
by an abelian group,
on a simple finite-dimensional real associative algebra,
whose homogeneous components have dimension $1$,
is equivalent to one, and only one,
of the graded tensor products of the following list,
where the gradings of each factor are those of examples
\ref{exam:grad_M2R_dim1}, \ref{exam:grad_H_dim1},
\ref{exam:grad_C_dim1} and \ref{exam:grad_M2C_dim1}.
\begin{enumerate}
\item[(1-a)]
	$ M_n(\mathbb{R}) \cong M_2(\mathbb{R}) \otimes
	\dots \otimes M_2(\mathbb{R}) $,
	$ n = 2^m \geq 1 $.\\
	The grading group is
	$ (\mathbb{Z}_2^2)^m \cong \mathbb{Z}_2^{2m} $.
\item[(1-b)]
	$ M_{n/2}(\mathbb{H}) \cong M_2(\mathbb{R}) \otimes
	\dots \otimes M_2(\mathbb{R}) \otimes \mathbb{H} $,
	$ n = 2^m \geq 2 $.\\
	The grading group is
	$ (\mathbb{Z}_2^2)^m \cong \mathbb{Z}_2^{2m} $.
\item[(1-c)]
	$ M_n(\mathbb{C}) \cong M_2(\mathbb{R}) \otimes \dots
	\otimes M_2(\mathbb{R}) \otimes \mathbb{C} $,
	$ n = 2^m \geq 1 $.\\
	The grading group is
	$ (\mathbb{Z}_2^2)^m \times \mathbb{Z}_2 \cong \mathbb{Z}_2^{2m+1} $.
\item[(1-d)]
	$ M_n(\mathbb{C}) \cong M_2(\mathbb{R}) \otimes \dots
	\otimes M_2(\mathbb{R}) \otimes M_2(\mathbb{C}) $,
	$ n = 2^m \geq 2 $.\\
	The grading group is
	$ (\mathbb{Z}_2^2)^{m-1} \times ( \mathbb{Z}_2 \times \mathbb{Z}_4 )
	\cong \mathbb{Z}_2^{2m-1} \times \mathbb{Z}_4 $.
\end{enumerate}
\end{theorem}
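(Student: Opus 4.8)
The plan is to classify these gradings via the pair of invariants $(\beta,\mu)$ constructed in the text from the commutation relations \eqref{eq:comm_rel} and the sign-of-squares relations \eqref{eq:sign_squar}, exactly as in the worked example. By the graded Artin--Wedderburn theorem and remark \ref{rem:dim_comp}, a division grading with one-dimensional components on a simple real associative algebra has support a finite abelian group $T$, and since all homogeneous elements are invertible and $\mathcal{D}_e= \mathbb{R} I$, each nonzero homogeneous element $X_t$ satisfies $X_t^2 = \mu(t) I$ and commutes up to a sign $\beta(u,v)$. The first step is to verify, as claimed in the text, that $\beta$ is an alternating bicharacter, that $\mu$ is a quadratic form with polarization $\beta_\mu = \beta$, and that the pair $(T,\mu)$ is a complete invariant up to isomorphism. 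Independence from the choice of the $X_t$ follows because any other homogeneous generator of $\mathcal{D}_t$ is a nonzero scalar multiple of $X_t$, and the values $\beta(u,v)\in\{\pm1\}$ and $\mu(t)\in\{\pm1\}$ are unchanged under rescaling.

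The second step is to separate into the type I and type II cases for $\beta$, following remark \ref{rem:dim_comp} and the dimension count in the text. When $\beta$ is of type I, the theory of alternating bicharacters gives $T \cong \mathbb{Z}_2^{2m}$, forcing $n = 2^m$, and the classification of quadratic forms with nondegenerate polarization reduces to the Arf invariant: Arf $+1$ yields case (1-a), realized by the iterated tensor product of copies of $M_2(\mathbb{R})$ from example \ref{exam:grad_M2R_dim1}, and Arf $-1$ yields case (1-b), realized by inserting one tensor factor $\mathbb{H}$ graded as in example \ref{exam:grad_H_dim1}. The identification of which underlying algebra ($M_n(\mathbb{R})$ versus $M_{n/2}(\mathbb{H})$) arises is forced by the Arf invariant, since over $\mathbb{R}$ the isomorphism type of the algebra is determined by the real Brauer class, and this is precisely what the Arf invariant of $\mu$ records.

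The third step handles the type II case, where $\rad(\beta) = \{e, f_\beta\}$ has two elements. The distinguished element $f_\beta$ spans a central homogeneous component, and the value $\mu(f_\beta)$ together with the order of a preimage of $f_\beta$ distinguishes cases (1-c) and (1-d): if $\mu(f_\beta)=-1$ and $f_\beta$ can be taken of order $2$, the neutral-extended factor is $\mathbb{C}$ graded by $\mathbb{Z}_2$ as in example \ref{exam:grad_C_dim1}, giving (1-c) with group $\mathbb{Z}_2^{2m+1}$; whereas the appearance of an element of order $4$ in $T$ (as highlighted in example \ref{exam:grad_M2C_dim1}) forces the factor $M_2(\mathbb{C})$ and the group $\mathbb{Z}_2^{2m-1}\times\mathbb{Z}_4$, giving (1-d). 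Existence in every case is settled by exhibiting the stated tensor products and checking, via the multiplicativity of $(\beta,\mu)$ under graded tensor products, that they realize all admissible invariants; exhaustiveness follows because every admissible $(T,\mu)$ is equivalent to one on the list.

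I expect the main obstacle to be the type II analysis, and specifically the bookkeeping needed to show that the order-$4$ phenomenon is genuinely unavoidable and correctly separates (1-d) from (1-c). Over $\mathbb{R}$ one must track how the central element $f_\beta$ interacts with the quadratic form and with complex conjugation on the neutral-adjacent component; the argument that a $\mathbb{Z}_4$ factor must occur, rather than being absorbable into a $\mathbb{Z}_2^{\,k}$ after renaming, is the delicate point, since it has no analogue in the type I case and is precisely where the real field (as opposed to the complex one) introduces a genuinely new grading group. Once the invariant $(T,\mu)$ together with the pair $\bigl(\mu(f_\beta),\,\text{order of }f_\beta\text{'s preimage}\bigr)$ is shown to be complete, the reduction of each equivalence class to a unique tensor product on the list is routine.
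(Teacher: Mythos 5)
Your proposal follows essentially the same route as the paper's: the invariants $\beta$ and $\mu$ extracted from the commutation relations \eqref{eq:comm_rel} and the signs of squares \eqref{eq:sign_squar}, completeness of $(T,\mu)$ up to isomorphism, the type I/type II dichotomy in which the symplectic-basis (Arf invariant) classification of quadratic forms settles (1-a) versus (1-b), the explicit graded tensor products as models realizing each class, and the order-$4$ phenomenon in $T$ separating (1-d) from (1-c). The only cosmetic difference is your appeal to the real Brauer class where the paper simply observes that equivalence to the exhibited tensor-product model already fixes the underlying algebra; like the paper, you correctly flag (and defer, as the paper does to \cite{Rod2016}) the delicate type II bookkeeping, where $\mu$ is no longer defined on the order-$4$ elements of $T$.
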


\begin{theorem}\label{th:art_one_dim1_isom}
The division gradings,
by abelian groups,
on the algebra $M_{n/2}(\mathbb{H})$,
whose homogeneous components have dimension $1$,
exist if and only if $ n = 2^m \geq 2 $,
and are determined up to isomorphism
by the sign of the squares of the homogeneous elements.
In this way we obtain a bijective correspondence
between the isomorphism classes
and the quadratic forms on $\mathbb{Z}_2^{2m}$
with polarization of type I and Arf invariant $-1$.
All these gradings belong to the same equivalence class,
represented by (1-b) in the list of theorem \ref{th:art_one_dim1_equiv}.
\end{theorem}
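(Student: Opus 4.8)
The plan is to package the general machinery developed in this section --- the assignment to each dimension-one division grading of its commutation bicharacter $\beta$ and its sign-of-squares quadratic form $\mu$ --- into a precise bijection, and then to isolate the algebra $M_{n/2}(\mathbb{H})$ by means of the Arf invariant. I would proceed in four steps. First I would record what the general analysis already yields: for any such grading with support $T$, choosing homogeneous generators $X_t$ and reading off $\beta$ from equation \eqref{eq:comm_rel} and $\mu$ from equation \eqref{eq:sign_squar}, the map $\beta$ is an alternating bicharacter of type I, the map $\mu$ is a quadratic form with $\beta_\mu = \beta$, and --- crucially --- the pair $(\beta,\mu)$, equivalently $\mu$ alone, is independent of the choice of the $X_t$ and is a complete isomorphism invariant. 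Since type I forces $T \cong \mathbb{Z}_2^{2m}$ and hence $n = 2^m$, this immediately gives injectivity of the map sending an isomorphism class to its quadratic form $\mu$.

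Second, I would pin down which forms occur for the quaternion algebra. By remark \ref{rem:dim_comp} the neutral component is $\mathbb{R}I$, so the underlying algebra is a real central simple algebra of dimension $n^2$, hence either $M_n(\mathbb{R})$ or $M_{n/2}(\mathbb{H})$ by Frobenius and Artin--Wedderburn. The discriminating invariant is the Arf invariant of $\mu$: the all-$M_2(\mathbb{R})$ tensor product of case (1-a) in theorem \ref{th:art_one_dim1_equiv} realizes $M_n(\mathbb{R})$ and has Arf invariant $+1$, while construction (1-b), carrying one $\mathbb{H}$ factor, realizes $M_{n/2}(\mathbb{H})$ and has Arf invariant $-1$. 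Because $\mu(e) = +1$ always, the value $m=0$ forces Arf invariant $+1$; thus the quaternion case requires $m \geq 1$, that is $n = 2^m \geq 2$, which is exactly the stated existence condition.

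Third, for surjectivity I would realize each admissible form. Given a quadratic form $\mu$ on $\mathbb{Z}_2^{2m}$ with polarization of type I and Arf invariant $-1$, I would build the twisted group algebra generated by symbols $X_t$ subject to $X_u X_v = \beta_\mu(u,v) X_v X_u$ and $X_t^2 = \mu(t)I$; its natural $T$-grading is a division grading with one-dimensional components whose invariant is precisely $\mu$. It then remains only to identify the underlying algebra. Rather than compute it directly, I would invoke the equivalence classification of theorem \ref{th:art_one_dim1_equiv}: every type-I form of Arf invariant $-1$ is equivalent, after a symplectic relabelling of $T$, to the form attached to case (1-b), so this graded algebra is equivalent to that model and therefore shares its underlying algebra $M_{n/2}(\mathbb{H})$. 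Combined with the first step this establishes the bijection between isomorphism classes and the forms in question, and the final assertion --- that all of them lie in the single equivalence class represented by (1-b) --- is exactly this symplectic-basis statement.

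The step I expect to be the genuine obstacle is the link in the second step between the Arf invariant of $\mu$ and the real form of the underlying algebra: that Arf invariant $-1$ corresponds to $M_{n/2}(\mathbb{H})$ rather than to $M_n(\mathbb{R})$. The clean way around a direct case analysis is the reduction used above --- produce the $\mathbb{H}$-factor model explicitly, verify its Arf invariant by a short count, and then transport the conclusion to an arbitrary $\mu$ through the equivalence classification of quadratic forms --- so that the combinatorial Arf invariant never has to be matched against Frobenius by hand.
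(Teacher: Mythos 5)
Your proposal follows essentially the same route as the paper: the sign-of-squares quadratic form $\mu$ as a complete isomorphism invariant, the type I condition forcing $T \cong \mathbb{Z}_2^{2m}$ and hence $n = 2^m$, the explicit tensor-product models pinning Arf invariant $+1$ to $M_n(\mathbb{R})$ and $-1$ to $M_{n/2}(\mathbb{H})$, and the symplectic-basis classification of quadratic forms to transport existence to every admissible $\mu$ and to collapse all of them into the single equivalence class (1-b). The only place you deviate is step three, where you realize an arbitrary admissible $\mu$ by an abstract twisted group algebra before identifying its underlying algebra; this detour is both unnecessary and, as written, imprecise: the relations $X_u X_v = \beta_\mu(u,v) X_v X_u$ and $X_t^2 = \mu(t)1$ alone do not present a $2^{2m}$-dimensional algebra (one needs the full cocycle relations $X_u X_v \in \mathbb{R} X_{uv}$), and invoking theorem \ref{th:art_one_dim1_equiv} to identify the result presupposes knowing that the constructed algebra is simple. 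The paper avoids all of this by transporting the grading itself: given an admissible $\mu$, write it as the model's form composed with an automorphism $\alpha$ of $T$ and relabel the homogeneous components of the (1-b) tensor product by $\alpha$, which realizes $\mu$ on $M_{n/2}(\mathbb{H})$ with no new construction --- which is in fact exactly the reduction your own closing paragraph describes, so the gap is harmless and easily repaired.
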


\section{Dimension four}

The next step in the classification
is to consider the division gradings
whose homogeneous components have dimension $4$.
Curiously this case,
in which the neutral component is isomorphic to $\mathbb{H}$,
is easier than that of dimension $2$,
in which the neutral component is isomorphic to $\mathbb{C}$.
Also in theorem \ref{th:art_one_dim1_equiv},
the algebra of complex matrices
gave us one more equivalence class than the others.
This extra difficulty is due to the center.

Given an algebra $\mathcal{D}$,
its center $Z(\mathcal{D})$
is the set of elements of the algebra
that commute with all the others:
\begin{equation}
Z(\mathcal{D}) = \{ X \in \mathcal{D} \mid
XY = YX \text{ for all } Y \in \mathcal{D} \}
\end{equation}
In the real algebras $M_n(\mathbb{R})$ and $M_n(\mathbb{H})$,
the only matrices that commute with all the others
are the multiples of the identity matrix;
therefore $ Z ( M_n(\mathbb{R}) ) $ and $ Z ( M_n(\mathbb{H}) ) $
are real vector spaces of dimension $1$.
On the other hand,
if we consider $M_n(\mathbb{C})$ as an algebra
over the field of complex numbers,
then its center is also formed
by the multiples of the identity matrix
and it is a complex vector space of dimension $1$.
However,
as a real vector space,
$ Z ( M_n(\mathbb{C}) ) $ has dimension $2$.
It is said that the real simple algebras
$M_n(\mathbb{R})$ and $M_n(\mathbb{H})$
are central simple;
whereas the algebra $M_n(\mathbb{C})$
is central simple as a complex algebra,
but not as a real algebra.

Let us return to the classification problem.
We are treating the case in which
our graded division algebra $\mathcal{D}$
has a neutral component $\mathcal{D}_e$ isomorphic to $\mathbb{H}$.
We consider the centralizer
$ C_{\mathcal{D}} (\mathcal{D}_e) $
of $\mathcal{D}_e$ in $\mathcal{D}$,
which is defined,
in a similar way to the center,
as the set of the elements of $\mathcal{D}$
that commute with all the elements of $\mathcal{D}_e$:
\begin{equation}
C_{\mathcal{D}} (\mathcal{D}_e) = \{ X \in \mathcal{D} \mid
XY = YX \text{ for all } Y \in \mathcal{D}_e \}
\end{equation}
The centralizer $ C_{\mathcal{D}} (\mathcal{D}_e) $
is a subalgebra of $\mathcal{D}$,
because if we perform operations
with elements of $ C_{\mathcal{D}} (\mathcal{D}_e) $
the result is still an element of $ C_{\mathcal{D}} (\mathcal{D}_e) $;
moreover,
the subalgebra $ C_{\mathcal{D}} (\mathcal{D}_e) $ is graded,
because if we decompose
any element of $ C_{\mathcal{D}} (\mathcal{D}_e) $
as a sum of homogeneous elements,
these elements also belong to $ C_{\mathcal{D}} (\mathcal{D}_e) $.
It is clear that if two graded algebras are isomorphic or equivalent,
then the centralizers of its neutral components
are also isomorphic or equivalent.

Since $\mathcal{D}_e$ is a central simple algebra,
we can apply to it a deep result
of the theory of associative algebras,
the double centralizer theorem
(see for example \cite[theorem 4.7]{Jac1989}).
Adapting this theorem to the structure of grading,
we have that the graded algebra $\mathcal{D}$
is isomorphic in a natural way to the graded tensor product
of $\mathcal{D}_e$ and its centralizer $ C_{\mathcal{D}} (\mathcal{D}_e) $:
\begin{equation}\label{eq:th_double_centr}
\mathcal{D} \cong \mathcal{D}_e \otimes C_{\mathcal{D}} (\mathcal{D}_e)
\end{equation}
Therefore we obtain the converse:
if the centralizers of the neutral components
of two of these graded algebras
are isomorphic or equivalent,
then the algebras are also isomorphic or equivalent.
Besides,
equation \eqref{eq:th_double_centr} also indicates that
the homogeneous components of $ C_{\mathcal{D}} (\mathcal{D}_e) $
have dimension $1$.
Let us state these arguments in the form of a theorem,
with the minimum of hypothesis.

\begin{theorem}\label{th:art_one_dim4}
Let $\mathcal{D}$ be a unital real associative algebra
endowed with a division grading
whose homogeneous components have dimension $4$.
Then the centralizer of the neutral component
$ C_{\mathcal{D}} (\mathcal{D}_e) $
is also a graded division algebra,
but with homogeneous components of dimension $1$.
Besides,
the graded algebra $\mathcal{D}$
is isomorphic to the graded tensor product
$ \mathcal{D}_e \otimes C_{\mathcal{D}} (\mathcal{D}_e) $.
Therefore,
the isomorphism and equivalence classes of $\mathcal{D}$
are determined by those of
$ C_{\mathcal{D}} (\mathcal{D}_e) $.
\end{theorem}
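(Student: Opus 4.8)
The plan is to exploit the fact that the neutral component is a copy of the quaternions together with the graded-division hypothesis, and to build the isomorphism \eqref{eq:th_double_centr} by hand rather than quoting the ungraded double centralizer theorem verbatim. Indeed, a graded-division algebra need not be simple as an ungraded algebra (for instance $\mathbb{R}\times\mathbb{R}=\mathbb{R}(1,1)\oplus\mathbb{R}(1,-1)$ is a $\mathbb{Z}_2$ division grading on a non-simple algebra), so $\mathcal{D}$ need not be central simple and the classical statement cannot be applied to $\mathcal{D}$ as the ambient algebra. I write $T$ for the support of the grading; since every nonzero homogeneous element is invertible with homogeneous inverse, $T$ is a subgroup of the grading group. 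By remark \ref{rem:dim_comp} the neutral component satisfies $\mathcal{D}_e\cong\mathbb{H}$, so it is central simple of dimension $4$ and $Z(\mathcal{D}_e)=\mathbb{R}I$. The centralizer $C:=C_{\mathcal{D}}(\mathcal{D}_e)$ is a graded subalgebra: if $x=\sum_t x_t$ centralizes $\mathcal{D}_e$ then, for each $a\in\mathcal{D}_e$, the component $x_t a-a x_t$ lies in $\mathcal{D}_t$, so $xa-ax=0$ forces $x_t a-a x_t=0$ for every $t$; thus each $x_t\in C$ and $C=\bigoplus_{t\in T}C_t$ with $C_t=C\cap\mathcal{D}_t$.

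The key step is to show that each $C_t$ is one-dimensional. First I would prove $C_t\neq 0$. Fix a nonzero $u_t\in\mathcal{D}_t$; it is invertible with $u_t^{-1}\in\mathcal{D}_{t^{-1}}$, and since $t e t^{-1}=e$ conjugation by $u_t$ maps $\mathcal{D}_e$ into $\mathcal{D}_e$, defining an automorphism of $\mathcal{D}_e\cong\mathbb{H}$. Because every automorphism of the central simple algebra $\mathbb{H}$ is inner (Skolem--Noether), there is an invertible $w\in\mathcal{D}_e$ inducing the same conjugation, whence $c_t:=w^{-1}u_t\in\mathcal{D}_e\mathcal{D}_t\subseteq\mathcal{D}_t$ is a nonzero element centralizing $\mathcal{D}_e$, i.e. $0\neq c_t\in C_t$. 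As $c_t$ is invertible, left multiplication gives $\mathcal{D}_t=\mathcal{D}_e c_t$, so every element of $\mathcal{D}_t$ is $d c_t$ with $d\in\mathcal{D}_e$; using that $c_t$ commutes with $\mathcal{D}_e$, such a $d c_t$ centralizes $\mathcal{D}_e$ if and only if $d\in Z(\mathcal{D}_e)=\mathbb{R}I$. Hence $C_t=\mathbb{R}c_t$ is one-dimensional. The same commutation argument shows $c_t^{-1}\in C$ and $I\in C_e$, so $C$ is a unital graded algebra whose nonzero homogeneous elements are all invertible: a graded-division algebra with one-dimensional homogeneous components.

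It then remains to realize \eqref{eq:th_double_centr}. I would consider the multiplication map $\Phi:\mathcal{D}_e\otimes C\to\mathcal{D}$, $\Phi(a\otimes c)=ac$, where $\mathcal{D}_e$ carries the trivial grading (everything in degree $e$) and $\mathcal{D}_e\otimes C$ the induced $T$-grading with component $\mathcal{D}_e\otimes C_t$. Because $C$ centralizes $\mathcal{D}_e$, $\Phi$ is an algebra homomorphism, and it is graded since $\mathcal{D}_e C_t\subseteq\mathcal{D}_t$. On each component $\Phi$ restricts to $\mathcal{D}_e\otimes\mathbb{R}c_t\to\mathcal{D}_t$, $a\otimes c_t\mapsto a c_t$, which is a bijection by $\mathcal{D}_t=\mathcal{D}_e c_t$ and the invertibility of $c_t$; therefore $\Phi$ is a graded isomorphism. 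Finally, since $\mathcal{D}_e\cong\mathbb{H}$ with the trivial grading is the same fixed graded algebra for every such $\mathcal{D}$, tensoring it with $C$ turns a graded isomorphism (resp. equivalence) $C\cong C'$ (resp. $C\sim C'$) into one $\mathcal{D}\cong\mathcal{D}'$ (resp. $\mathcal{D}\sim\mathcal{D}'$); the converse was already observed before the statement. This gives the final assertion.

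I expect the main obstacle to be precisely the point where simplicity of $\mathcal{D}$ would otherwise be used: establishing $C_t\neq 0$ for every $t\in T$. Everything else is bookkeeping with dimensions and the commutation relation, but the nonvanishing of each homogeneous component of the centralizer genuinely needs the graded-division hypothesis (to get invertible representatives) combined with Skolem--Noether (to trivialize the conjugation action on $\mathbb{H}$); this is the graded surrogate for the ungraded double centralizer theorem.
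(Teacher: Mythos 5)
Your proof is correct, and it takes a genuinely different route from the paper's. The paper's own argument is short: since $\mathcal{D}_e\cong\mathbb{H}$ is central simple, it invokes the double centralizer theorem in the form of \cite[theorem 4.7]{Jac1989} --- multiplication gives an isomorphism $\mathcal{B}\otimes C_{\mathcal{A}}(\mathcal{B})\cong\mathcal{A}$ whenever $\mathcal{B}$ is a finite-dimensional central simple subalgebra of an arbitrary unital algebra $\mathcal{A}$ --- and then observes that this isomorphism respects the gradings; the one-dimensionality of the homogeneous components of $C_{\mathcal{D}}(\mathcal{D}_e)$ is read off afterwards from equation \eqref{eq:th_double_centr}. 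You reverse the logical order: you first prove $\dim C_t=1$ for every $t$ in the support, using invertibility of homogeneous elements together with Skolem--Noether to manufacture an invertible $c_t\in C_t$ with $\mathcal{D}_t=\mathcal{D}_e c_t$, and only then obtain \eqref{eq:th_double_centr} by checking that the multiplication map is bijective component by component. What your route buys is self-containedness: it makes explicit the ``adapting to the structure of grading'' step that the paper leaves implicit, and it produces concrete invertible spanning elements $c_t$ of the centralizer. What the paper's route buys is brevity, at the price of quoting a deep theorem as a black box. One caveat: your stated reason for avoiding the classical theorem --- that $\mathcal{D}$ itself need not be simple --- rests on a misreading, since the version the paper cites imposes central simplicity only on the subalgebra $\mathcal{D}_e$ and nothing on the ambient algebra, so the paper's appeal is perfectly legitimate (and, like your argument, requires neither simplicity nor finite-dimensionality of $\mathcal{D}$). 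This does not affect the validity of your proof; in effect you have re-proved the relevant graded case of the double centralizer theorem.
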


For instance,
since $ M_n(\mathbb{R}) \cong \mathbb{H} \otimes M_{n/4}(\mathbb{H}) $,
putting together theorems
\ref{th:art_one_dim1_isom} and \ref{th:art_one_dim4}
we deduce that
there exist division gradings
by abelian groups on the algebra $M_n(\mathbb{R})$
with homogeneous components of dimension $4$
if and only if $ n = 2^m \geq 4 $,
and that the isomorphism classes are in bijective correspondence
with the quadratic forms on $\mathbb{Z}_2^{2m-2}$
with polarization of type I and Arf invariant $-1$.
Analogously,
each of the equivalence classes
of theorem \ref{th:art_one_dim1_equiv}
gives us exactly one equivalence class
for the case of homogeneous components of dimension $4$.

\section{Dimension two}

Finally let us analyze the case in which
the homogeneous components have dimension $2$.
We start by studying the division grading
on the real algebra $ \mathcal{D} = M_2(\mathbb{C}) $
given by the following equation:
\begin{equation}\label{eq:grad_M2C_Z22}
\begin{split}
M_2(\mathbb{C}) = {} &
\left[
	\mathbb{R} \begin{pmatrix} 1 & 0 \\ 0 & 1 \end{pmatrix}
	\oplus
	\mathbb{R} \begin{pmatrix} 0 & -1 \\ 1 & 0 \end{pmatrix}
\right]
\oplus
\left[
	\mathbb{R} \begin{pmatrix} 0 & 1 \\ 1 & 0 \end{pmatrix}
	\oplus
	\mathbb{R} \begin{pmatrix} -1 & 0 \\ 0 & 1 \end{pmatrix}
\right]
\oplus
\\ &
\left[
	\mathbb{R} \begin{pmatrix} i & 0 \\ 0 & i \end{pmatrix}
	\oplus
	\mathbb{R} \begin{pmatrix} 0 & -i \\ i & 0 \end{pmatrix}
\right]
\oplus
\left[
	\mathbb{R} \begin{pmatrix} 0 & i \\ i & 0 \end{pmatrix}
	\oplus
	\mathbb{R} \begin{pmatrix} -i & 0 \\ 0 & i \end{pmatrix}
\right]
\end{split}
\end{equation}
The grading group is
$ T = \mathbb{Z}_2^2 =
\langle g,f \mid g^2=e=f^2 , \allowbreak \, gf=fg \rangle $
and the degrees are assigned in the following order:
$e$,~$g$; $f$,~$gf$.
Unlike what happened
when the homogeneous components had dimension $1$,
now the signs of the squares of the homogeneous elements
are not always defined.
Indeed,
in the component of degree $e$ there are matrices whose square is positive,
but there are also others whose square is negative;
and the same occurs in the component of degree $f$.
Let us call $K$ the set of these degrees,
$ K = \{ e,f \} $.
We observe that this set is precisely
the support of the centralizer of the neutral component:
\begin{equation}
K = \supp ( C_{\mathcal{D}} (\mathcal{D}_e) )
\end{equation}

On the other hand,
given any nonzero homogeneous matrix of degree $g$,
its square is always a positive multiple of the identity matrix.
The same thing happens if instead of matrices of degree $g$
we take matrices of degree $gf$,
but now we obtain negative multiples of $I$.
Therefore we can define,
picking normalized matrices $X_t$,
the map $ \nu : T \setminus K \to \{ \pm 1 \} $
by means of the signs of the squares of the homogeneous elements:
\begin{equation}
X_t^2 = \nu(t) I
\end{equation}

The key point is that,
thanks to theorem \ref{th:coars},
the map $\nu$ is still well defined
in the general case in which $\mathcal{D}$ is any
simple finite-dimensional real associative algebra
(except for some complications when $T$ has elements of order $4$).
Besides,
applying the double centralizer theorem it is proved that $\nu$
determines the graded algebra $\mathcal{D}$ up to isomorphism.
Let us state this in the form of a theorem,
in the cases in which $\mathcal{D}$ is central simple,
that is,
$M_n(\mathbb{R})$ or $M_n(\mathbb{H})$,
because then the grading group has no elements of order $4$.

\begin{theorem}
The division gradings,
by abelian groups,
on the central simple finite-dimensional real associative algebras,
with homogeneous components of dimension $2$,
are determined up to isomorphism
by the signs of the squares of
the homogeneous elements that do not commute with the neutral component.
\end{theorem}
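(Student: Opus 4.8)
The plan is to show that the sign function recording these squares is at once a well-defined isomorphism invariant and a complete one, obtaining completeness by reducing, through the double centralizer theorem, to the dimension-one classification already established. First I would fix the structure. By Remark \ref{rem:dim_comp} the neutral component is $\mathcal{D}_e \cong \mathbb{C}$, so I choose $J \in \mathcal{D}_e$ with $J^2 = -I$. Conjugation by any nonzero homogeneous $X_t$ is an $\mathbb{R}$-automorphism of $\mathcal{D}_e \cong \mathbb{C}$, hence $X_t J X_t^{-1} = \pm J$; this assignment is a character $\chi \colon T \to \{\pm 1\}$ whose kernel is $K = \supp(C_{\mathcal{D}}(\mathcal{D}_e))$. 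Since $\mathcal{D}$ is central simple over $\mathbb{R}$, the element $J$ is not central, so $K$ is a proper, hence index-two, subgroup; moreover the absence of elements of order $4$ (the hypothesis singling out the central simple case) forces $t^2 = e$ for every $t$, so $T$ is elementary abelian and $T = K \sqcup t_0 K$ for any fixed $t_0 \notin K$. For $t \notin K$ conjugation by $X_t$ fixes $X_t^2$ but sends $aI + bJ$ to $aI - bJ$; writing $X_t^2 = aI + bJ \in \mathcal{D}_e$ this forces $b = 0$, so $X_t^2 \in \mathbb{R}I$, and after normalization $X_t^2 = \nu(t) I$ with $\nu(t) = \pm 1$. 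Independence of the choice of $X_t$ is immediate, since replacing $X_t$ by $\lambda X_t$ with $\lambda \in \mathcal{D}_e^{\times}$ gives $(\lambda X_t)^2 = \lambda\,\overline{\lambda}\,X_t^2 = |\lambda|^2 X_t^2$, of the same sign. That every grading in question genuinely has this shape — $\mathcal{D}_e$-free homogeneous components and the clean coset structure on the support — is where I would lean on Theorem \ref{th:coars}. This settles that $\nu$ is well-defined, and it is visibly preserved by graded isomorphisms, as these respect the center, $\mathcal{D}_e$, the subgroup $K$, and the taking of squares.

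For completeness — the heart of the matter — I would reconstruct $\mathcal{D}$ from $\nu$. Fix $t_0 \notin K$ and a normalized $X_{t_0}$, and form the graded subalgebra $B = \mathcal{D}_e \oplus \mathcal{D}_e X_{t_0}$ supported on $\{e, t_0\}$. Its multiplication table, dictated by $J^2 = -I$, $X_{t_0} J = -J X_{t_0}$ and $X_{t_0}^2 = \nu(t_0) I$, identifies $B \cong M_2(\mathbb{R})$ when $\nu(t_0) = +1$ and $B \cong \mathbb{H}$ when $\nu(t_0) = -1$; in either case $B$ is central simple over $\mathbb{R}$ of dimension $4$. I would then apply the graded double centralizer theorem (the mechanism behind \eqref{eq:th_double_centr} and Theorem \ref{th:art_one_dim4}) to the central simple graded subalgebra $B \subseteq \mathcal{D}$, obtaining a graded isomorphism $\mathcal{D} \cong B \otimes_{\mathbb{R}} E$ with $E = C_{\mathcal{D}}(B)$. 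The neutral component of $E$ is $\mathcal{D}_e \cap Z(B) = \mathbb{R}I$, so by Remark \ref{rem:dim_comp} $E$ is a graded division algebra all of whose homogeneous components have dimension $1$, and its isomorphism class is governed by the type-I quadratic form of Theorems \ref{th:art_one_dim1_isom} and \ref{th:art_one_dim1_equiv}. Reading off the remaining values $\nu(t_0 k)$, for $k \in K$, in terms of $\nu(t_0)$, the squares in $C_{\mathcal{D}}(\mathcal{D}_e)$ and the commutation bicharacter shows that this quadratic form is exactly encoded by $\nu$, and conversely $\nu$ is recovered from it. Hence two such graded algebras with the same support and the same $\nu$ are isomorphic.

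The main obstacle I anticipate is the bookkeeping across the two cosets: on $K$ the squares live in $\mathbb{C}^{\times}$ and carry no sign, so all numerical content is concentrated on $t_0 K$, and I must verify that the reduction $\mathcal{D} \cong B \otimes_{\mathbb{R}} E$ is canonical — independent of the auxiliary choice of $t_0$ and $X_{t_0}$ — and that the complex grading on $C_{\mathcal{D}}(\mathcal{D}_e)$ matches correctly with the type-I quadratic form produced by $E$. A secondary delicate point is the justification, through Theorem \ref{th:coars}, that the dimension-two grading never refines in an unexpected way; this is precisely what keeps the central simple hypothesis (no elements of order $4$) doing its work and separates this case from that of $M_n(\mathbb{C})$.
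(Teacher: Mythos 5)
Your proposal is correct, and its skeleton is the paper's: the sign map $\nu$ on $T \setminus K$ as the invariant, with completeness obtained from the double centralizer theorem and the dimension-one classification. The interesting divergence is at the step the paper calls the key point: there, well-definedness of $\nu$ is credited to Theorem~\ref{th:coars}, while your conjugation argument (for $t \notin K$ with $t^2 = e$, the element $X_t^2 \in \mathcal{D}_e$ is fixed by conjugation by $X_t$, which acts on $\mathcal{D}_e \cong \mathbb{C}$ as complex conjugation, and rescaling $X_t$ by $\lambda \in \mathcal{D}_e^{\times}$ multiplies the square by $|\lambda|^2$) is self-contained; your one appeal to Theorem~\ref{th:coars} is in fact superfluous, since $\mathcal{D}_t = \mathcal{D}_e X_t$ follows from invertibility of homogeneous elements and the coset structure from your character $\chi$. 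Your reduction is also the natural way to make the paper's one-line appeal to the double centralizer theorem explicit: $B = \mathcal{D}_e \oplus \mathcal{D}_e X_{t_0}$ is central simple, $E = C_{\mathcal{D}}(B)$ is a central simple graded division algebra with one-dimensional components and support $K$, and since every homogeneous element of $E$ commutes with $X_{t_0}$, the quadratic form $\mu_E$ of $E$ satisfies $\nu(t_0 k) = \nu(t_0)\,\mu_E(k)$ for all $k \in K$. This single identity is all the ``bookkeeping across the two cosets'' there is, and Theorem~\ref{th:art_one_dim1_isom} (together with its analogue for $M_n(\mathbb{R})$) then gives completeness. The canonicity you worry about is not needed: to compare two gradings with the same $\nu$, simply fix the same $t_0$ in both.

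The one step you must not treat as a hypothesis is that $t^2 = e$ for all $t \in T$. It is a consequence of central simplicity and has to be proved; note moreover that ``no elements of order $4$'' by itself does not give exponent $2$, since it does not exclude odd orders. Your proof uses the claim twice: for $X_t^2 \in \mathcal{D}_e$, and for $\{e, t_0\}$ to be a subgroup so that $B$ is graded. Here is the missing argument. Write $C = C_{\mathcal{D}}(\mathcal{D}_e) = \bigoplus_{k \in K} \mathcal{D}_k$; by the double centralizer theorem ($\mathcal{D}$ central simple, $\mathcal{D}_e$ simple), $C$ is simple, and its center is a field containing $\mathcal{D}_e \cong \mathbb{C}$ and finite-dimensional over $\mathbb{R}$, so $Z(C) = \mathcal{D}_e$. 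Now take $t \notin K$; then $t^2 \in K$ because $\chi$ is a character, so $Y := X_t^2$ lies in $C$ and commutes with $\mathcal{D}_e$. For $u \in K$ write $X_t X_u X_t^{-1} = \lambda_u X_u$ with $\lambda_u \in \mathcal{D}_e^{\times}$; then conjugation by $Y$, being the square of conjugation by $X_t$, sends $X_u$ to $\overline{\lambda_u}\lambda_u X_u = |\lambda_u|^2 X_u$. On the other hand, $Y X_u Y^{-1} = c\,X_u$ with $c^n = 1$ for some $n \geq 1$ (because $t^{2n} = e$ for some $n$, so $Y^n \in \mathcal{D}_e$ is central in $C$), and a positive real root of unity equals $1$. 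Hence $Y$ commutes with every $X_u$ and with $\mathcal{D}_e$, that is, $Y \in Z(C) = \mathcal{D}_e$; since $0 \neq Y \in \mathcal{D}_{t^2} \cap \mathcal{D}_e$, we conclude $t^2 = e$. This is exactly where centrality and simplicity do their work: for $M_n(\mathbb{C})$ the centralizer $C$ need not be simple, and elements of order $4$ do occur, as in class (2-e) of Theorem~\ref{th:art_one_dim2_equiv}. With this inserted, your proof is complete.
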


Not all the maps $ \nu : T \setminus K \to \{ \pm 1 \} $
come from division gradings,
but only those that satisfy certain properties,
which we characterize in the first article \cite{Rod2016}.

In the central simple cases
$ \mathcal{D} = M_n(\mathbb{R}) $ and $ \mathcal{D} = M_n(\mathbb{H}) $,
all these gradings belong to the same equivalence class,
which we call (2-a) and (2-b) respectively.
However,
if $ \mathcal{D} = M_n(\mathbb{C}) $ the situation is more complicated.
Let us see what is the list of the equivalence classes in this case.

\begin{example}\label{exam:grad_M2C_Z4}
Figure \ref{fig_en:grad_M2C_Z4}
defines a division grading
on the real algebra $M_2(\mathbb{C})$
by the group $\mathbb{Z}_4$.
Note that its homogeneous components have dimension $2$,
and that it is simply a coarsening of the grading
of example \ref{exam:grad_M2C_dim1}.
\begin{figure}
\begin{align*}
M_2(\mathbb{C}) = {} &
\left[
	\mathbb{R} \begin{pmatrix} 1 & 0 \\ 0 & 1 \end{pmatrix}
	\oplus
	\mathbb{R} \begin{pmatrix} 0 & i \\ i & 0 \end{pmatrix}
\right]
\oplus
\\ &
\left[
	\mathbb{R} \begin{pmatrix} 1+i & 0 \\ 0 & -1-i \end{pmatrix}
	\oplus
	\mathbb{R} \begin{pmatrix} 0 & 1-i \\ -1+i & 0 \end{pmatrix}
\right]
\oplus
\\ &
\left[
	\mathbb{R} \begin{pmatrix} i & 0 \\ 0 & i \end{pmatrix}
	\oplus
	\mathbb{R} \begin{pmatrix} 0 & 1 \\ 1 & 0 \end{pmatrix}
\right]
\oplus
\\ &
\left[
	\mathbb{R} \begin{pmatrix} -1+i & 0 \\ 0 & 1-i \end{pmatrix}
	\oplus
	\mathbb{R} \begin{pmatrix} 0 & -1-i \\ 1+i & 0 \end{pmatrix}
\right]
\end{align*}
\caption{Division grading of example \ref{exam:grad_M2C_Z4}.}
\label{fig_en:grad_M2C_Z4}
\end{figure}
\end{example}

\begin{theorem}\label{th:art_one_dim2_equiv}
Any division grading,
by an abelian group,
on the real algebra $M_n(\mathbb{C})$,
with homogeneous components of dimension $2$,
is equivalent to one, and only one,
of the graded algebras of the following list,
where the gradings of the first factor of each tensor product
are those of examples
\ref{exam:grad_M2R_dim2} and \ref{exam:grad_M2C_Z4},
whereas the rest of the factors are graded as in examples
\ref{exam:grad_M2R_dim1}, \ref{exam:grad_C_dim1} and \ref{exam:grad_M2C_dim1}.
\begin{enumerate}
\item[(2-c)]
	$ M_n(\mathbb{C}) \cong M_2(\mathbb{R}) \otimes M_2(\mathbb{R})
	\otimes \dots \otimes M_2(\mathbb{R}) \otimes \mathbb{C} $,
	$ n = 2^m \geq 2 $.\\
	The grading group is
	$ \mathbb{Z}_2 \times (\mathbb{Z}_2^2)^{m-1} \times \mathbb{Z}_2
	\cong \mathbb{Z}_2^{2m} $.
\item[(2-d)]
	$ M_n(\mathbb{C}) \cong M_2(\mathbb{R}) \otimes M_2(\mathbb{R})
	\otimes \dots \otimes M_2(\mathbb{R}) \otimes M_2(\mathbb{C}) $,
	$ n = 2^m \geq 4 $.\\
	The grading group is
	$ \mathbb{Z}_2 \times (\mathbb{Z}_2^2)^{m-2} \times
	( \mathbb{Z}_2 \times \mathbb{Z}_4 ) \cong
	\mathbb{Z}_2^{2m-2} \times \mathbb{Z}_4 $.
\item[(2-e)]
	$ M_n(\mathbb{C}) \cong M_2(\mathbb{C}) \otimes
	M_2(\mathbb{R}) \otimes \dots \otimes M_2(\mathbb{R}) $,
	$ n = 2^m \geq 2 $.\\
	The grading group is
	$ \mathbb{Z}_4 \times (\mathbb{Z}_2^2)^{m-1}
	\cong \mathbb{Z}_2^{2m-2} \times \mathbb{Z}_4 $.
\item[(2-f)]
	Gradings on $M_n(\mathbb{C})$ as an algebra over
	the scalar field of the complex numbers $\mathbb{C}$.
\end{enumerate}
\end{theorem}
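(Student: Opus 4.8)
The plan is to treat an arbitrary division grading, by an abelian group which we take to be the support $T$, on the real algebra $\mathcal{D} = M_n(\mathbb{C})$ with all homogeneous components of dimension $2$, so that $\mathcal{D}_e \cong \mathbb{C}$ by remark \ref{rem:dim_comp}. I would organize the argument as existence, exhaustiveness, and non-redundancy. Existence is routine: for the families (2-c)--(2-e) one forms the displayed graded tensor products of the basic gradings of examples \ref{exam:grad_M2R_dim1}, \ref{exam:grad_M2R_dim2}, \ref{exam:grad_C_dim1}, \ref{exam:grad_M2C_dim1} and \ref{exam:grad_M2C_Z4}, using that a graded tensor product of graded division algebras is again a graded division algebra, and then reading off the component dimensions and the grading groups; family (2-f) consists of the complex gradings, whose complex one-dimensional components have real dimension $2$.

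The core of the classification is a dichotomy governed by the center. Because $\mathcal{D}$ is simple and the grading is a division grading, $Z(\mathcal{D}) = \mathbb{C} I$ is a graded subalgebra; since $I \in \mathcal{D}_e$ and $(iI)^2 = -I$, the complex unit $iI$ is homogeneous of some degree $c$ satisfying $c^2 = e$. First I would settle the case $c = e$: then $\mathbb{C} I \subseteq \mathcal{D}_e$, and comparing dimensions forces $\mathcal{D}_e = \mathbb{C} I = Z(\mathcal{D})$. Multiplication by $iI$ then preserves every homogeneous component, so each component is a complex subspace and the whole grading is $\mathbb{C}$-linear; these are exactly the complex gradings of family (2-f). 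For the remaining case $c \neq e$ the element $c$ has order $2$, the complex structure is graded nontrivially, and we are in the genuinely real situation.

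For $c \neq e$ I would bring in the invariants already available in dimension two: the support $K = \supp(C_{\mathcal{D}}(\mathcal{D}_e))$, the sign map $\nu$ on $T \setminus K$, and the commutation bicharacter $\beta$ of equation \eqref{eq:comm_rel}; note that $c \in K$, since $iI$ centralizes $\mathcal{D}_e$. The idea is to reduce to the already-classified one-dimensional gradings (1-c) and (1-d) of theorem \ref{th:art_one_dim1_equiv} and to the single central-simple dimension-two class (the theorem preceding example \ref{exam:grad_M2C_Z4}), either by exhibiting the given grading as a coarsening of a fine one-dimensional grading via theorem \ref{th:coars}, or by an analogue of the double-centralizer splitting behind equation \eqref{eq:th_double_centr} adapted to the present neutral component. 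The isomorphism type of the grading group then gives a first separation: family (2-c) has the elementary abelian group $\mathbb{Z}_2^{2m}$, whereas (2-d) and (2-e) have $\mathbb{Z}_2^{2m-2} \times \mathbb{Z}_4$ and thus contain an element of order $4$; this order-$4$ phenomenon reflects the type-II behaviour of the bicharacter seen earlier in the one-dimensional $M_n(\mathbb{C})$ gradings.

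The hard part will be separating (2-d) from (2-e), which share both the abstract grading group and, up to renaming, the map $\nu$; and, more fundamentally, the fact that $\mathcal{D}_e \cong \mathbb{C}$ is not central simple over $\mathbb{R}$, so the clean tensor decomposition of theorem \ref{th:art_one_dim4} is unavailable and must be replaced by a more delicate bookkeeping of how $c$ sits inside $K$. Concretely, I expect the distinguishing invariant to record whether the degree $c$ of $iI$ is realized as the square of an order-$4$ element lying in the support of the centralizer --- as in example \ref{exam:grad_M2C_Z4}, where $c$ is the square of the order-$4$ generator, giving (2-e) --- or whether instead the order-$4$ part and the complex structure occupy independent tensor factors, giving (2-d). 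Proving that this relation is an equivalence invariant, that it takes different values on (2-d) and (2-e), and that together with the grading group and $\nu$ it forms a complete set of invariants, is the crux; once it is established, matching an arbitrary grading to exactly one family and excluding redundancies follows by comparing these invariants.
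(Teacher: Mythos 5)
Your overall skeleton does parallel the paper's own route: splitting off (2-f) by the degree $c$ of $iI$ (the paper indeed characterizes (2-f) as the case where the neutral component coincides with the center), working with $K=\supp(C_{\mathcal{D}}(\mathcal{D}_e))$ and the sign map $\nu$ supported by theorem \ref{th:coars} and the double centralizer theorem, and separating (2-c) from (2-d)/(2-e) by the support group, which is legitimate because an equivalence of graded division algebras induces an isomorphism of support groups. The problem is the crux step, where your proposed invariant is stated backwards and the computation offered to justify it is wrong. In example \ref{exam:grad_M2C_Z4} the neutral component is spanned by $I$ and a matrix $u$ with $u^2=-I$, and its centralizer is exactly $\mathcal{D}_e\oplus\mathcal{D}_{b^2}$, so $K=\{e,b^2\}$: it contains \emph{no} element of order $4$, hence $c=b^2$ is \emph{not} the square of an order-$4$ element lying in $K$ (the order-$4$ generator $b$ lies outside $K$). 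By contrast, in family (2-d) the fine $\mathbb{Z}_2\times\mathbb{Z}_4$-graded factor $M_2(\mathbb{C})$ centralizes the neutral component (which sits inside the coarsely graded $M_2(\mathbb{R})$ factor), so there $K$ does contain order-$4$ elements, and all of them square to $c$. Thus the test you formulate holds in (2-d) and fails in (2-e) --- exactly opposite to the labels you assign --- so, as written, your procedure would sort an arbitrary grading into the wrong family. Your auxiliary picture is also inaccurate: in (2-d) the degree of $iI$ and the order-$4$ degrees both come from the same tensor factor $M_2(\mathbb{C})$; what occupies an independent factor is the two-dimensional neutral component. The correct criterion, which the paper states after the theorem, is that (2-e) occurs precisely when $T\setminus K$ is \emph{exactly} the set of elements of order $4$ of $T$ (equivalently, $K$ contains no order-$4$ element).

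Even with the labels corrected, a genuine gap remains. First, you only assert, and do not argue, that the grading group together with $\nu$ and this extra datum is a complete system of equivalence invariants. Second, your premise that (2-d) and (2-e) ``share, up to renaming, the map $\nu$'' conceals the main difficulty: in family (2-e) every $t\in T\setminus K$ has order $4$, so $X_t^2\in\mathcal{D}_{t^2}=\mathcal{D}_c\neq\mathbb{R}I$ and the defining relation $X_t^2=\nu(t)I$ makes no sense; $\nu$ must be redefined there by a nontrivial normalization. This is precisely the complication the paper flags (``what makes the definition of the invariant $\nu$ more difficult \dots this case (2-e) is more complicated than the others''), and handling it is the actual content of the hard case, not a bookkeeping detail that ``follows by comparing invariants.''
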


If we compare the lists of theorems
\ref{th:art_one_dim1_equiv} and \ref{th:art_one_dim2_equiv},
we see that the equivalence class (1-c) gives rise to (2-c).
That is,
the situation is similar to what happened in the central simple cases.
However,
(1-d) gives rise to two different equivalence classes,
(2-d) and (2-e).
This last one occurs when $ T \setminus K $
is precisely the set of elements of order $4$ of $T$,
what makes the definition of the invariant $\nu$ more difficult.
In fact we warn anyone who reads the first article \cite{Rod2016}
that this case (2-e) is more complicated than the others.

\section{Publication}

In may 2015 we had written the article,
so we set it aside some time
in order to revise it afterwards one last time before submitting it to Arxiv.
Then we were informed that
Yuri Bahturin and Mikhail Zaicev
were working in exactly the same problem as us,
although in their case they were interested only
in the classification up to equivalence,
but not up to isomorphism.
I would lie if I said that I did not worry at that time.
However,
I think that this coincidence turned out to be very beneficial.
The fact that two groups are interested in the same problem
means that this question is important.
At the end,
we both published our articles
\cite{BZ2016} and \cite{Rod2016} simultaneously,
reaching the same results
(although in \cite{BZ2016} the equivalence class (2-d) was overlooked)
using different techniques.
Besides,
as a consequence of this coincidence,
I went to
Memorial University of Newfoundland (Canada)
for a three-month stay
in the spring of 2016,
and we continued the research in a collaborative way.

\selectlanguage{english}
\chapter{Second article}\label{ch:art_2}

\begin{enumerate}

\item[\cite{BKR2018a}]
Y. Bahturin, M. Kochetov\ and\ A. Rodrigo-Escudero,
Classification of involutions on graded-division simple real algebras,
Linear Algebra Appl. {\bf 546} (2018), 1--36.
\\
\url{https://arxiv.org/abs/1707.05526}
\\
\url{https://doi.org/10.1016/j.laa.2018.01.040}

\end{enumerate}

\section{Objective}\label{sect:art_2_obj}

Recall that the final purpose of my thesis is
to classify gradings on simple real Lie algebras.
In order to do so,
we adapt the arguments of the monograph \cite{EK2013}
from the case in which the scalar field is the complex numbers
to the case in which it is the real numbers.
In chapter \ref{ch:art_3} we will explain this process with more detail,
but the idea is that we can transfer
the classification of the gradings
on an associative algebra to a Lie algebra,
provided that the first one is endowed with an additional structure:
an involution.

Thus,
one of the steps is to study the involutions
that are compatible with the division gradings
that we have seen in chapter \ref{ch:art_1}.
The analogous of this question over the complex field
is solved in \cite[propositions 2.51 and 2.53]{EK2013}
(see also \cite{BZ2006});
the involutions are divided into two equivalence classes.
However
if the scalar field is $\mathbb{R}$
many more classes appear.
For this reason we considered that this classification
could be of independent interest,
and we proposed the objective of the second article \cite{BKR2018a}.

\begin{objective}\label{obj_en:art_2}
We classify,
up to isomorphism and up to equivalence,
involutions on
graded-division simple finite-dimensional real associative algebras,
when the grading group is abelian.
\end{objective}

\section{Involutions}

Throughout this chapter we fix
an abelian group $G$,
a simple finite-dimensional real associative algebra $\mathcal{D}$,
and a division $G$-grading $\Gamma$ on $\mathcal{D}$.
In other words,
$\mathcal{D}$ is one of the graded algebras
that we classified in chapter \ref{ch:art_1}.

For a moment let us forget about the grading.
An antiautomorphism of the algebra $\mathcal{D}$
is a map $ \varphi : \mathcal{D} \to \mathcal{D} $
that is an isomorphism of vector spaces
and reverses the order of the product.
That is,
$\varphi$ is bijective
and for all $ X,Y \in \mathcal{D} $
and all $ \lambda \in \mathbb{R} $
the following conditions are satisfied:
$ \varphi(X+Y) = \varphi(X) + \varphi(Y) $,
$ \varphi( \lambda X ) = \lambda \varphi(X) $,
and $ \varphi(XY) = \varphi(Y) \varphi(X) $.
If we say that $\varphi$ is
an antiautomorphism of the graded algebra $\mathcal{D}$,
it means that we are taking into account its grading,
and therefore one more condition has to be satisfied:
$ \varphi(X_g) \in \mathcal{D}_g $
for all $ g \in G $ and all $ X_g \in \mathcal{D}_g $.
Finally,
an involution $\varphi$ is an antiautomorphism
such that $ \varphi(\varphi(X)) = X $ for all $ X \in \mathcal{D} $.

For instance,
the map $ \varphi : M_2(\mathbb{C}) \to M_2(\mathbb{C}) $
given by transposition and conjugation,
$ \varphi(X) = \overline{X^T} $ for any matrix $ X \in M_2(\mathbb{C}) $,
is an involution on the real algebra $M_2(\mathbb{C})$.
We observe that this involution respects
the grading of equation \eqref{eq:grad_M2C_Z22}.
However,
it does not respect the grading of example \ref{exam:grad_M2C_dim1}.
In fact it can be proved that,
if the support $T$ is isomorphic to $ \mathbb{Z}_2^a \times \mathbb{Z}_4 $
(which implies that the underlying algebra $\mathcal{D}$
is isomorphic to $M_n(\mathbb{C})$),
then any antiautomorphism on the graded division algebra $\mathcal{D}$
acts as the identity on the center of $\mathcal{D}$.
This was one of the first results of the classification
that drew our attention.
Another example of involution,
this time on the graded algebra of example \ref{exam:grad_H_dim1},
is the map $ \varphi : \mathbb{H} \to \mathbb{H} $
given by $ \varphi(a+bi+cj+dk) = a+bi+cj-dk $.

Let $ \varphi : \mathcal{D} \to \mathcal{D} $
and $ \varphi' : \mathcal{D}' \to \mathcal{D}' $
be involutions on two graded algebras $\mathcal{D}$ and $\mathcal{D}'$,
whose gradings we denote by $\Gamma$ and $\Gamma'$.
We say that the pair $(\Gamma,\varphi)$ is isomorphic
(respectively equivalent)
to the pair $(\Gamma',\varphi')$
if there exists an isomorphism
(respectively equivalence)
of graded algebras
$ \psi : \mathcal{D} \to \mathcal{D}' $
that also commutes with the involutions $\varphi$ and $\varphi'$,
that is,
such that $ \varphi'(\psi(X)) = \psi(\varphi(X)) $
for all $ X \in \mathcal{D} $.

In the second article \cite{BKR2018a}
we classify the pairs $(\Gamma,\varphi)$,
up to isomorphism and up to equivalence,
for all the possible involutions $\varphi$
on the graded algebra $\mathcal{D}$.
Although a lot of cases appear,
in the article we made an effort to write explicitly all of them,
so that the classification serves as a reference.
In the following sections we are going to show
some of the most illustrative cases.

\section{Case (1-a)}

Let us suppose that the homogeneous components
of the grading $\Gamma$ have dimension $1$,
and that the underlying algebra $\mathcal{D}$
is isomorphic to $M_n(\mathbb{R})$.
That is,
the graded algebra $\mathcal{D}$ is equivalent to
the representative (1-a) of theorem \ref{th:art_one_dim1_equiv}.
We recall that,
in this situation,
the support $T$ of $\Gamma$ is a group isomorphic to $\mathbb{Z}_2^{2m}$
($ n = 2^m $),
the map $ \beta : T \times T \to \{ \pm 1 \} $
defined by equation \eqref{eq:comm_rel}
is an alternating bicharacter of type I,
and the map $ \mu : T \to \{ \pm 1 \} $
defined by equation \eqref{eq:sign_squar}
is a quadratic form such that
$ \beta_{\mu} = \beta $ and $ \Arf(\mu) = +1 $.
Besides,
the isomorphism class of $\Gamma$ is determined by $\mu$.

A straightforward computation tells us that
the involutions on the graded algebra $\mathcal{D}$
are in bijective correspondence with
the quadratic forms $ \eta : T \to \{ \pm 1 \} $
such that $ \beta_{\eta} = \beta $.
Namely,
the correspondence is given by the following equation,
where $ X_t \in \mathcal{D}_t $:
\begin{equation}\label{eq:eta}
\varphi(X_t) = \eta(t) X_t
\end{equation}
Note that there are $2^{2m}$ possible quadratic forms $\eta$
for the fixed alternating bicharacter $\beta$,
that is,
$2^{2m}$ involutions that are not isomorphic to each other.
In total,
the pair $(\Gamma,\varphi)$ is determined up to isomorphism by
two quadratic forms $\mu$ and $\eta$ defined on the same group,
the support $ T \cong \mathbb{Z}_2^{2m} $.

The classification up to equivalence is more complicated.
A reasonable conjecture is
to divide the $2^{2m}$ quadratic forms $\eta$ into three sets:
\begin{enumerate}
	\item[(1)] $ \eta = \mu $.
	\item[(2)] $ \Arf(\eta) = +1 $ but $ \eta \neq \mu $.
	\item[(3)] $ \Arf(\eta) = -1 $.
\end{enumerate}
That is,
on the one hand we have the distinguished case $ \eta = \mu $,
and on the other hand we separate
the quadratic forms that would correspond
to a division grading on $M_n(\mathbb{R})$
from those that would correspond
to a division grading on $M_{n/2}(\mathbb{H})$.

Indeed,
these are the three equivalence classes of involutions,
but the proof can be laborious,
because there are a lot of cases to analyze.
Fortunately
we found a way of dealing with all of them at the same time.
Here we are not going to repeat the arguments,
but at least let us indicate that the key idea
is to consider the two following lemmas.

\begin{lemma}
Let $\mu$ and $\eta$ be two different quadratic forms
on a finite abelian group $T$
such that $ \beta_{\mu} = \beta_{\eta} $.
Then $ \{ t \in T \mid \mu(t) = \eta(t) \} $
is a subgroup of $T$ of index $2$.
\end{lemma}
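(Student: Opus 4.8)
The plan is to package the two quadratic forms into a single map and read the statement off its kernel. Define $\chi : T \to \{\pm 1\}$ by $\chi(t) = \mu(t)\eta(t)^{-1}$; since the values lie in $\{\pm 1\}$, this is just $\chi(t) = \mu(t)\eta(t)$. The set appearing in the statement is exactly $\{t \in T \mid \chi(t) = 1\}$, the kernel of $\chi$, so once I know that $\chi$ is a group homomorphism $T \to \{\pm 1\}$, that set is automatically a subgroup of $T$.

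To prove that $\chi$ is a homomorphism I would use the hypothesis $\beta_{\mu} = \beta_{\eta}$ directly. Writing out equation \eqref{eq:polarization} for both forms and equating them gives, for all $u,v \in T$,
\begin{equation*}
\mu(uv)\mu(u)^{-1}\mu(v)^{-1} = \eta(uv)\eta(u)^{-1}\eta(v)^{-1}.
\end{equation*}
Because $\{\pm 1\}$ is abelian and every element is its own inverse, I can rearrange this into $\mu(uv)\eta(uv)^{-1} = \bigl(\mu(u)\eta(u)^{-1}\bigr)\bigl(\mu(v)\eta(v)^{-1}\bigr)$, that is, $\chi(uv) = \chi(u)\chi(v)$. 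This single manipulation is the whole content of the argument, and it is routine; it simply expresses that the quotient of two quadratic forms with a common polarization is multiplicative.

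Finally I would invoke the hypothesis that $\mu$ and $\eta$ are different. This means $\chi(t_0) = -1$ for at least one $t_0 \in T$, so $\chi$ is a \emph{surjective} homomorphism onto $\{\pm 1\}$. Its kernel therefore has index $2$, which is precisely the claim. I do not expect any genuine obstacle here: the finiteness of $T$ plays no role beyond fixing the ambient setting, and everything reduces to the observation above that $\mu\eta^{-1}$ is a character.
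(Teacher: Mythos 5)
Your proof is correct and complete: dividing equation \eqref{eq:polarization} for $\mu$ by the one for $\eta$ shows that $\chi = \mu\eta^{-1}$ is a homomorphism $T \to \{\pm 1\}$, the set in the statement is $\ker\chi$, and $\mu \neq \eta$ forces $\chi$ to be surjective, so its kernel has index $2$. Note that the thesis itself states this lemma without proof (the arguments are deferred to the second article \cite{BKR2018a}), so there is no in-paper argument to compare against; yours is the natural and standard one, and your side remark is also right that finiteness of $T$ plays no role in this particular statement.
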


\begin{lemma}
Let $\beta$ be an alternating bicharacter of type I
on a finite abelian group $T$.
Then the following map is a bijection:
\begin{equation}
\begin{split}
T & \longrightarrow \{ S \mid
S \text{ is a subgroup of } T \text{ of index } 1 \text{ or } 2 \}
\\
u & \longmapsto u^{\perp} = \{ v \in T \mid \beta(u,v) = 1 \}
\end{split}
\end{equation}
\end{lemma}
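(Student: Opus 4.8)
The plan is to reduce everything to elementary linear algebra over the field $\mathbb{F}_2$. Since $\beta$ is of type I, the cited theory gives $T \cong \mathbb{Z}_2^{2m}$; indeed, for any $u$ one has $\beta(u^2,v) = \beta(u,v)^2 = 1$ for all $v$, so $u^2 \in \rad(\beta) = \{e\}$ and $T$ is an elementary abelian $2$-group, which I regard as an $\mathbb{F}_2$-vector space $V$ of dimension $2m$. Identifying $\{\pm 1\}$ with $\mathbb{F}_2$ via $+1 \mapsto 0$ and $-1 \mapsto 1$ turns $\beta$ into a nondegenerate alternating $\mathbb{F}_2$-bilinear form $B$ on $V$, and under this identification $u^{\perp} = \{v \mid B(u,v) = 0\}$ is exactly the kernel of the linear functional $B(u,\cdot)\colon V \to \mathbb{F}_2$.

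First I would check that the map lands in the stated target. The set $u^{\perp}$ is a subgroup because it is the kernel of the homomorphism $\beta(u,\cdot)\colon T \to \{\pm 1\}$, and its index equals the size of the image. If $u = e$ this image is trivial and $u^{\perp} = T$ has index $1$; if $u \neq e$, nondegeneracy of $B$ forces $B(u,\cdot) \neq 0$, so the image is all of $\{\pm 1\}$ and $u^{\perp}$ is a hyperplane of index $2$. Thus every $u^{\perp}$ is a subgroup of index $1$ or $2$, as required.

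For bijectivity I would factor the map through the dual space. Nondegeneracy says precisely that $u \mapsto B(u,\cdot)$ is an injective linear map $V \to V^{*}$, hence a bijection since $\dim V = \dim V^{*}$. It therefore suffices to show that the kernel map $\ell \mapsto \ker \ell$, from $V^{*}$ to the set of subgroups of index $1$ or $2$, is a bijection. The zero functional gives the whole space (index $1$), and each nonzero functional gives a hyperplane (index $2$); conversely, a hyperplane $H$ equals $\ker \ell$ for exactly one nonzero $\ell$, since the functionals vanishing on $H$ form the one-dimensional annihilator of $H$, which over $\mathbb{F}_2$ contains a single nonzero element. Composing the two bijections yields the claim.

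The step that carries the real content is the last one, and it is here that the hypothesis that the scalars are $\pm 1$ (equivalently, that the field is $\mathbb{F}_2$) is indispensable: over a larger field $k$ the kernel map $V^{*} \to \{\text{hyperplanes}\}$ is only surjective, with each hyperplane coming from $|k^{\times}|$ functionals, so the correspondence $u \mapsto u^{\perp}$ would fail to be injective. As a sanity check one can instead argue by counting: there is one subgroup of index $1$ and $2^{2m}-1$ of index $2$ (one per nonzero functional), for a total of $2^{2m} = |T|$, so injectivity of $u \mapsto u^{\perp}$ already forces bijectivity between two finite sets of equal size.
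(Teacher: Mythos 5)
Your proof is correct and complete. Note that the thesis itself never proves this lemma: it is stated as one of the two key ingredients for the equivalence classification in case (1-a), with the remark that the arguments are not repeated there (they live in the second article \cite{BKR2018a}), so there is no in-text proof to compare against. Your route is the natural one and, as far as the thesis indicates, the intended one: since $\beta$ is $\{\pm 1\}$-valued on a finite group and of type I, $T$ is an elementary abelian $2$-group, i.e.\ an $\mathbb{F}_2$-vector space on which $\beta$ becomes a nondegenerate alternating bilinear form $B$; nondegeneracy makes $u \mapsto B(u,\cdot)$ an isomorphism onto the dual, and over $\mathbb{F}_2$ the kernel map from the dual to the set of subspaces of codimension $0$ or $1$ is itself a bijection because the annihilator of a hyperplane is one-dimensional and so contains exactly one nonzero functional. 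All the small verifications are in place: subgroups of index $1$ or $2$ of an elementary abelian $2$-group are precisely the subspaces of codimension $0$ or $1$, the index of $u^{\perp}$ is the order of the image of $\beta(u,\cdot)$, and the closing counting argument ($1 + (2^{2m}-1) = \lvert T \rvert$ targets) is a valid independent check. Your aside that injectivity would fail over a larger field correctly isolates where the $\mathbb{F}_2$ hypothesis is used. The only cosmetic point: the evenness of the rank ($T \cong \mathbb{Z}_2^{2m}$ rather than $\mathbb{Z}_2^{n}$) is asserted but not proved; it follows from the standard fact that nondegenerate alternating forms exist only in even dimension, and in any case your argument never uses it.
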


Finally,
remark again that
the involution corresponding to $ \eta = \mu $ is distinguished.
If we take as $\mathcal{D}$
the tensor product (1-a) of theorem \ref{th:art_one_dim1_equiv},
and we identify it with $M_n(\mathbb{R})$ in the natural way,
then this distinguished involution $\varphi$
is the matrix transposition,
$ \varphi(X) = X^T $ for any matrix $ X \in M_n(\mathbb{R}) $.

\section{Case (1-c)}

The case (1-c),
in which the underlying algebra $\mathcal{D}$
is isomorphic to $M_n(\mathbb{C})$
and the support $T$ is a group isomorphic to $\mathbb{Z}_2^{2m+1}$
($ n = 2^m $),
is very similar.
In fact equations \eqref{eq:comm_rel},
\eqref{eq:sign_squar} and \eqref{eq:eta}
remain valid,
and the isomorphism class is determined by
the quadratic forms $\mu$ and $\eta$.
Let us point out the differences.
Now the alternating bicharacter $\beta$ is of type II,
and the conditions that $\mu$ has to satisfy are
$ \beta_{\mu} = \beta $ and $ \mu(f_{\beta}) = -1 $
(recall that $ \rad(\beta) = \{ e , f_{\beta} \} $).
The Arf invariant of $\mu$ is always $0$.

The condition $ \mu(f_{\beta}) = -1 $ is interesting.
It is due to the fact that the central element $iI$
is homogeneous of degree $f_{\beta}$
and $ (iI)^2 = -I $.
However
when we compute equation \eqref{eq:eta},
we realize that the only condition that $\eta$ has to satisfy
is $ \beta_{\eta} = \beta $;
so it is possible that $ \eta(f_{\beta}) = +1 $.
In fact,
the $2^{2m+1}$ quadratic forms $\eta$
are divided into four equivalence classes:
\begin{enumerate}
	\item[(1)] $ \eta = \mu $.
	\item[(2)] $ \eta(f_{\beta}) = -1 $ but $ \eta \neq \mu $.
	\item[(3)] $ \eta(f_{\beta}) = +1 $ and $ \Arf(\eta) = +1 $.
	\item[(4)] $ \eta(f_{\beta}) = +1 $ and $ \Arf(\eta) = -1 $.
\end{enumerate}

In order to arrive to this conclusion,
we computed the division gradings that would correspond to
the quadratic forms $\mu$ such that
$ \beta_{\mu} = \beta $ and $ \mu(f_{\beta}) = +1 $
(instead of $ \mu(f_{\beta}) = -1 $).
We obtained two equivalence classes,
one for the case $ \Arf(\mu) = +1 $,
with underlying algebra $ M_n(\mathbb{R}) \times M_n(\mathbb{R}) $,
and the other for the case $ \Arf(\mu) = -1 $,
with underlying algebra $ M_{n/2}(\mathbb{H}) \times M_{n/2}(\mathbb{H}) $.
We see that these two classes are reflected
in the cases (3) and (4) of the above list.
Therefore,
the classification of division gradings
on (not necessarily simple) algebras
whose center has dimension $1$ or $2$
helps us to understand that of
pairs $(\Gamma,\varphi)$ on simple algebras.
So we also tackled
part of this classification of division gradings
in the second article \cite{BKR2018a}.

Finally note that,
in the literature,
a quadratic form $\mu$ is said to be regular if
either its polarization $\beta$ is of type I,
or $\beta$ is of type II and also $ \mu(f_{\beta}) = -1 $.
This definition may seem a bit strange.
However,
now we see that the regular quadratic forms
are precisely those that correspond,
by means of equation \eqref{eq:sign_squar},
to division gradings on simple algebras.

\section{New division gradings}

Recall that the representatives of the equivalence classes
of theorem \ref{th:art_one_dim1_equiv}
are written as graded tensor products
of copies of the four graded division algebras of examples
\ref{exam:grad_M2R_dim1}, \ref{exam:grad_H_dim1},
\ref{exam:grad_C_dim1} and \ref{exam:grad_M2C_dim1}.
Now there appear more division gradings
on algebras that are not simple,
concretely on
$ M_n(\mathbb{R}) \times M_n(\mathbb{R}) $,
$ M_{n/2}(\mathbb{H}) \times M_{n/2}(\mathbb{H}) $ and
$ M_n(\mathbb{R}) \times M_{n/2}(\mathbb{H}) $.
In order to express them as graded tensor products,
we need three new types of factors,
the graded division algebras of example \ref{exam:grad_semisimple}.
We remark that it is curious that we found
the two division gradings of figure \ref{fig_en:grad_semisimple}
because we were looking for the involutions
on the graded algebra of figure \ref{fig_en:grad_M2C_dim1}.

\begin{example}\label{exam:grad_semisimple}
The following equation defines a division grading
on the algebra $ \mathbb{R} \times \mathbb{R} $
by the group $\mathbb{Z}_2$:
\begin{equation}
\mathbb{R} \times \mathbb{R} = \mathbb{R}(1,1) \oplus \mathbb{R}(1,-1)
\end{equation}
Besides,
figure \ref{fig_en:grad_semisimple} shows two division gradings
by the group $ \mathbb{Z}_2 \times \mathbb{Z}_4 =
\langle a,b \mid a^2=e=b^4 , \allowbreak \, ab=ba \rangle $
on the algebras $ M_2( \mathbb{R} \times \mathbb{R} ) $
and $ M_2(\mathbb{R}) \times \mathbb{H} $.
The degrees are assigned in the same order that we followed
in example \ref{exam:grad_M2C_dim1}:
$e$,~$a$; $b$,~$ab$; $b^2$,~$ab^2$; $b^3$,~$ab^3$.
\begin{figure}
\begin{align*}
M_2( \mathbb{R} \times \mathbb{R} ) = {} &
	\mathbb{R} \begin{pmatrix} (1,1) & (0,0) \\ (0,0) & (1,1) \end{pmatrix}
	\oplus
	\mathbb{R} \begin{pmatrix} (1,1) & (0,0) \\ (0,0) & (-1,-1) \end{pmatrix}
	\oplus
\\
&
	\mathbb{R} \begin{pmatrix} (0,0) & (1,-1) \\ (1,1) & (0,0) \end{pmatrix}
	\oplus
	\mathbb{R} \begin{pmatrix} (0,0) & (1,-1) \\ (-1,-1) & (0,0) \end{pmatrix}
	\oplus
\\
&
	\mathbb{R} \begin{pmatrix} (1,-1) & (0,0) \\ (0,0) & (1,-1) \end{pmatrix}
	\oplus
	\mathbb{R} \begin{pmatrix} (1,-1) & (0,0) \\ (0,0) & (-1,1) \end{pmatrix}
	\oplus
\\
&
	\mathbb{R} \begin{pmatrix} (0,0) & (1,1) \\ (1,-1) & (0,0) \end{pmatrix}
	\oplus
	\mathbb{R} \begin{pmatrix} (0,0) & (1,1) \\ (-1,1) & (0,0) \end{pmatrix}
\end{align*}
\begin{align*}
M_2(\mathbb{R}) \times \mathbb{H} = {} &
	\mathbb{R} \left( \begin{pmatrix} 1 & 0 \\ 0 & 1 \end{pmatrix} , 1 \right)
	\oplus
	\mathbb{R} \left( \begin{pmatrix} 0 & -1 \\ 1 & 0 \end{pmatrix} , i \right)
	\oplus
\\
&
	\mathbb{R} \left( \begin{pmatrix} 1 & 0 \\ 0 & -1 \end{pmatrix} , j \right)
	\oplus
	\mathbb{R} \left( \begin{pmatrix} 0 & 1 \\ 1 & 0 \end{pmatrix} , k \right)
	\oplus
\\
&
	\mathbb{R} \left( \begin{pmatrix} 1 & 0 \\ 0 & 1 \end{pmatrix} , -1 \right)
	\oplus
	\mathbb{R} \left( \begin{pmatrix} 0 & -1 \\ 1 & 0 \end{pmatrix} , -i \right)
	\oplus
\\
&
	\mathbb{R} \left( \begin{pmatrix} 1 & 0 \\ 0 & -1 \end{pmatrix} , -j \right)
	\oplus
	\mathbb{R} \left( \begin{pmatrix} 0 & 1 \\ 1 & 0 \end{pmatrix} , -k \right)
\end{align*}
\caption{Division gradings of example \ref{exam:grad_semisimple}.}
\label{fig_en:grad_semisimple}
\end{figure}
\end{example}

\section{Case (2-f)}

Finally let us consider the case (2-f),
in which the real algebra $\mathcal{D}$
is isomorphic to $M_n(\mathbb{C})$,
and the neutral component of the grading $\Gamma$
coincides with the center of $\mathcal{D}$.
This means that $\Gamma$ can be regarded
as a grading of the complex algebra $M_n(\mathbb{C})$.

In this situation,
the commutation relations of equation \eqref{eq:comm_rel}
are enough to determine the graded algebra $\mathcal{D}$.
The quadratic form $\mu$ of equation \eqref{eq:sign_squar}
is no longer defined.
Now $T$ is a group isomorphic to
$ \mathbb{Z}_{\ell_1}^2 \times \dots \times \mathbb{Z}_{\ell_r}^2 $,
where $ \ell_1 \cdots \ell_r = n $.
Whereas the alternating bicharacter $\beta$
can take values in $ \mathbb{C} \setminus \{ 0 \} $,
and the only condition that it has to satisfy is $ \rad(\beta) = \{ e \} $.

If $\varphi$ is an involution on the graded algebra $\mathcal{D}$,
then either $ \varphi(iI) = +iI $ or $ \varphi(iI) = -iI $.
The classification of the pairs $(\Gamma,\varphi)$
such that $ \varphi(iI) = +iI $
had already been studied in \cite[propositions 2.51 y 2.53]{EK2013},
because these involutions respect the complex structure of $\mathcal{D}$.
It is very similar to the classifications of the cases
that we have seen previously,
in which the homogeneous components have dimension $1$.

Let us focus on the involutions $\varphi$ such that $ \varphi(iI) = -iI $.
Their classification is totally different.
Moreover,
I think that section 8 of \cite{BKR2018a}
is one of the most interesting parts of the article;
by the way,
it can be read independently of the rest of the text.

We define $ T_{[2]} =  \{ t\in T \mid t^2 = e \} $.
The isomorphism classes of the pairs $(\Gamma,\varphi)$
are in bijective correspondence with
the subgroups $S$ of $T_{[2]}$ of index $1$ or $2$
via the following equation:
\begin{equation}
S = \{ t \in T_{[2]} \mid
\exists X \in \mathcal{D}_t
\text{ such that } X^2 = +I
\text{ and } \varphi(X) = X \}
\end{equation}

This is a good example of some of the difficulties
that appear in the mathematical research.
The proof is not so complicated,
the real challenge was to realize what we had to prove,
without even knowing if this was a solvable problem.
On the other hand,
we have to warn that the classification up to equivalence
is one of the most difficult parts of the article.

Note that there is a distinguished isomorphism class,
the one corresponding to $ S = T_{[2]} $.
The involutions that belong to this class
have signature $ \sqrt{ \vert T_{[2]} \vert } $,
whereas the rest of the involutions on the graded algebra $\mathcal{D}$
have signature $0$.

\selectlanguage{english}
\chapter{Third article}\label{ch:art_3}

\begin{enumerate}

\item[\cite{BKR2018b}]
Y. Bahturin, M. Kochetov\ and\ A. Rodrigo-Escudero,
Gradings on classical central simple real Lie algebras,
J. Algebra {\bf 506} (2018), 1--42.
\\
\url{https://arxiv.org/abs/1707.07909}
\\
\url{https://doi.org/10.1016/j.jalgebra.2018.02.036}

\end{enumerate}

\section{Objective}\label{sect:art_3_obj}

A Lie algebra $\mathcal{L}$ is
a (not necessarily associative) algebra
whose bilinear product
$ [ \cdot , \cdot ] : \mathcal{L} \times \mathcal{L} \to \mathcal{L} $
satisfies the following two properties:
\begin{enumerate}
\item Anticommutativity:
$ [x,x] = 0 $ for all $ x \in \mathcal{L} $.
\item Jacobi identity:
$ [[x,y],z] + [[y,z],x] + [[z,x],y] = 0 $
for all $ x,y,z \in \mathcal{L} $.
\end{enumerate}

Killing (1890) and Cartan \cite{Car1894} classified
the semisimple (fi\-nite-di\-men\-sional) complex Lie algebras
thanks to a grading:
the root space decomposition
relative to a Cartan subalgebra.
There appear four infinite families of simple Lie algebras,
$\{A_r\}_{r=1}^{\infty}$,
$\{B_r\}_{r=2}^{\infty}$,
$\{C_r\}_{r=3}^{\infty}$ and
$\{D_r\}_{r=4}^{\infty}$,
called classical,
and five exceptional simple Lie algebras,
$E_6$, $E_7$, $E_8$, $F_4$ and $G_2$.
On the other hand,
in \cite{Wed1908} Wedderburn solved
the analogous problem in the associative case.
Although the result is easier,
all the simple (finite-dimensional)complex associative algebras
belong to the same infinite family
$ \{ M_n(\mathbb{C}) \}_{n=1}^{\infty} $,
it was obtained 14 years after that of Cartan.
This is clearly a good example
of the importance that Lie algebras have
in mathematics and physics.

We are interested in
simple (finite-dimensional) real Lie algebras.
The centroid of one such algebra $\mathcal{L}$
is either isomorphic to $\mathbb{R}$ or to $\mathbb{C}$.
In the second case $\mathcal{L}$ is simply
an algebra over the field of complex numbers,
but considered as an algebra over the field of real numbers.
In the first case $\mathcal{L}$ is a real form of its complexification
$ \mathbb{C} \otimes_{\mathbb{R}} \mathcal{L} $,
and it is said that $\mathcal{L}$ is central simple.
The type of a simple real Lie algebra
is that of the complex algebra from which it comes.
Thus,
both the complex algebra $E_6$,
regarded as a real algebra,
and its real forms are algebras of type $E_6$.

Recall that our objective is not to classify Lie algebras,
because these have already been extensively studied,
but their possible gradings.
Concretely,
the result of the third article \cite{BKR2018b} is the following.

\begin{objective}\label{obj_en:art_3}
For any abelian group $G$,
we classify up to isomorphism all $G$-gradings
on the classical central simple Lie algebras,
except those of type $D_4$,
over the field of real numbers
(or any real closed field).
\end{objective}

Let us remark that it is a very technical article,
in which we need a lot of parameters in order to state the theorems.
The main difference with respect to
the analogous classification over the complex field
(see \cite{BK2010} and \cite{EK2013})
is the presence of a new parameter $\sigma$ of signatures.

\section{Associative algebras with involution}\label{sect:alg_inv}

Let us see how
the classification of gradings on Lie algebras
is reduced to
the classification of gradings on associative algebras with involution.

Let $\mathcal{R}$ be an associative algebra,
with product denoted by juxtaposition.
We consider the new product in $\mathcal{R}$
given by $ [x,y] = xy-yx $ for all $ x,y \in \mathcal{R} $.
Then $\mathcal{R}$,
endowed with this new product,
is a Lie algebra,
which we denote by $\mathcal{R}^{(-)}$.
If besides $\varphi$ is an involution on $\mathcal{R}$,
we can consider the space of skewsymmetric elements:
\begin{equation}
\Skew(\mathcal{R},\varphi) = \{ x \in \mathcal{R} \mid \varphi(x) = -x \}
\end{equation}
We have that
$\Skew(\mathcal{R},\varphi)^{(-)}$ is a subalgebra
of the Lie algebra $\mathcal{R}^{(-)}$.
It is well known that the
classical central simple real Lie algebras
can be expressed by means of this construction.
For instance,
those of type $C_r$ are given by taking
either $ \mathcal{R} = M_{2r}(\mathbb{R}) $
or $ \mathcal{R} = M_r(\mathbb{H}) $
and $\varphi$ symplectic.

Any grading on $(\mathcal{R},\varphi)$ induces
a grading on $ \mathcal{L} = \Skew(\mathcal{R},\varphi)^{(-)} $
by restriction.
The theory of affine group schemes allows to prove that
this correspondence is actually a bijection
between the gradings by abelian groups on $(\mathcal{R},\varphi)$
and the gradings on $\mathcal{L}$
(except if $\mathcal{L}$ is of type $A_1$ or $D_4$).
If we fix an abelian group $G$,
we obtain a bijection between the respective
isomorphism classes of $G$-gradings.

This is the reason why in the previous articles
we classified gradings on associative algebras.
Of course those classifications are of independent interest,
but our motivation was to use them in the study of Lie algebras.
Besides,
now we can explain why we have always assumed that
the grading groups are abelian:
the support of a grading on a simple Lie algebra
always generates an abelian subgroup of the grading group
(see for example \cite[lemma 2.1]{BZ2006} or \cite[proposition 1]{DM2006}).

\section{Summary}

In this section we are going to give a brief idea
of the ingredients that appear in the classification.

We want to characterize up to $G$-isomorphism
the graded algebra with involution $(\mathcal{R},\varphi)$.
Let us forget about $\varphi$ for the moment.
We write $\mathcal{R}$ as $M_k(\mathcal{D})$,
in the same way as in section \ref{sect:state_art}.
Recall that,
as a $G$-graded algebra,
$M_k(\mathcal{D})$ was defined by
the graded division algebra $\mathcal{D}$
and $k$ elements of the abelian group $G$.
In the first article \cite{Rod2016} we classified the possible $\mathcal{D}$.
Call $T$ the support of $\mathcal{D}$
and fix in each homogeneous component $\mathcal{D}_t$
a homogeneous element $X_t$.
Curiously,
in some cases the $X_e$ that we choose is not the unit
(this happens if $\mathcal{L}$ is
$\mathfrak{sp}_{2r}(\mathbb{R})$ or $ \mathfrak{u}^* (r) $,
and $ \mathcal{D}_e \cong \mathbb{C} $).
Actually,
the isomorphism class of $M_k(\mathcal{D})$
does not change if we permute the $k$ elements of $G$,
or if we multiply any of them by an element of $T$.
That is,
the information is contained in a ``multiplicity function''
$ \kappa : G/T \to \mathbb{Z}_{ \geq 0 } $.
If we enumerate the nonzero multiplicities,
$ \kappa(x_i) = k_i $ with $ 1 \leq i \leq s $,
then their total sum is $ k_1 + \dots + k_s = k $.

Let us now take into account the involution $\varphi$.
After some computations we see that we can write it as
$ \varphi(X) = \Phi^{-1} \varphi_0(X^T) \Phi $,
where $\Phi$ is a block diagonal matrix of $M_k(\mathcal{D})$
and $\varphi_0$ is an involution of $\mathcal{D}$,
which acts on $ X^T \in M_k(\mathcal{D}) $ elementwise.
Recall that we classified the possible $\varphi_0$
in the second article \cite{BKR2018a}.
In a lot of cases we can choose
$\varphi_0$ to be the distinguished involution.
The blocks of the matrix $\Phi$ are of the form $ X_{t_i} S_i $,
where the $S_i$ are matrices of size $ k_i \times k_i $.
As an example of the possible values that $S_i$ can take
we may mention the following two matrices;
in the first one $ k_i = p_i + q_i $,
and in the second one $k_i$ has to be even:
\begin{equation*}
\begin{pmatrix} I_{p_i} & 0 \\ 0 & - I_{q_i} \end{pmatrix}
\qquad \qquad
\begin{pmatrix} 0 & I_{ k_i / 2 } \\ - I_{ k_i / 2 } & 0 \end{pmatrix}
\end{equation*}
We define a ``signature function'' $ \sigma : G/T \to \mathbb{Z} $
as $ \sigma(x_i) = p_i - q_i $ for the values of $i$ with signature,
and $ \sigma(x) = 0 $ for all the other $ x \in G/T $.
Besides,
we need two other ingredients in order to describe $\varphi$,
a parameter $ \delta \in \{ \pm 1 \} $
given by $ \varphi_0(\Phi^T) = \delta \Phi $,
and an element $ g_0 \in G $ that in a certain sense collects the degree of $\Phi$.
In total,
the graded algebra with involution $(\mathcal{R},\varphi)$
is given by six parameters:
$\mathcal{D}$, $\varphi_0$, $g_0$, $\kappa$, $\sigma$, $\delta$.

The next step is to consider two of these graded algebras with involution,
$ (\mathcal{R},\varphi) = M( \mathcal{D} , \allowbreak
\varphi_0 , \allowbreak g_0 , \allowbreak
\kappa , \allowbreak \sigma , \allowbreak \delta ) $ and
$ (\mathcal{R}',\varphi') = M( \mathcal{D}' , \allowbreak
\varphi_0' , \allowbreak g_0' , \allowbreak
\kappa' , \allowbreak \sigma' , \allowbreak \delta' ) $,
and determine when they are isomorphic.
In order to do so we can suppose that $ \mathcal{D} = \mathcal{D}' $,
$ \varphi_0 = \varphi_0' $ and $ \delta = \delta' $,
but because of different reasons.
On the one hand,
if the graded division algebras $\mathcal{D}$ and $\mathcal{D}'$
are not isomorphic,
then $\mathcal{R}$ and $\mathcal{R}'$ are not isomorphic either.
Therefore the total classification will be the sum of the classifications
as $\mathcal{D}$ runs through all the possible isomorphism classes.
On the other hand,
we can change the involution $\varphi_0$ and choose the one that suits us best
(if it is possible we choose the distinguished involution of $\mathcal{D}$).
If we chose another one,
we would obtain the same classification,
but parametrized in a different way.
Finally we can also assume that $\delta$ is fixed;
curiously if $\mathcal{L}$ is of type $A_r$,
this is due to the second reason,
whereas if $\mathcal{L}$ is of type $B_r$, $C_r$ or $D_r$,
it is because of the first reason.

The issue is in the parameters $(g_0,\kappa,\sigma)$.
The signature function $\sigma$ is not the adequate invariant
to solve the isomorphism condition,
since the sign of $ p_i - q_i $
depends on the representative that we take
in the coset $ x_i \in G/T $.
In order to avoid this ambiguity,
we consider a new parameter,
which we call ``extended signature function''
$ \tilde{\sigma} : G \to \mathbb{Z} $,
and which is equivalent to $\sigma$.
The result is presented by means of
the action of a group that acts on the the parameters,
so that $(\mathcal{R},\varphi)$ and $(\mathcal{R}',\varphi')$
are isomorphic if and only if
$(g_0,\kappa,\sigma)$ and $(g_0',\kappa',\sigma')$
are in the same orbit.
For instance,
in the types $B_r$, $C_r$ and $D_r$
this action consists of the sum of
the natural action of $ g \in G $ on $\kappa$ and $\tilde{\sigma}$
(which also replaces $g_0$ with $ g^{-2} g_0 $),
and the action of changing the sign of all the signatures.

Finally we particularize the classification
to each of the types of
the classical central simple real Lie algebras.
In some cases we have to compute
the global signature of the involution $\varphi$
depending on the parameters,
what by the way gives us a beautiful formula.

\selectlanguage{english}
\chapter{Fourth article}\label{ch:art_4}

\begin{enumerate}

\item[\cite{ER2018}]
A. Elduque\ and\ A. Rodrigo-Escudero,
Clifford algebras as twisted gro\-up algebras and the Arf invariant,
Adv. Appl. Clifford Algebr. {\bf 28} (2018), no.~2, 28:41.
\\
\url{https://arxiv.org/abs/1801.07002}
\\
\url{https://doi.org/10.1007/s00006-018-0862-y}

\end{enumerate}

\section{Introduction}\label{sect:art_4_intr}

The purpose of the first three articles of my thesis
has been to study and classify gradings on real algebras.
Now we tackle a more practical topic;
the fourth article \cite{ER2018}
is an example of the applications of gradings
to solve problems in other areas of mathematics.

While we classified division gradings on associative algebras,
we realized that these are intimately related
with Clifford algebras \cite[remark 17]{Rod2016}.
So after achieving the initial goal of the doctorate,
we decided to return to analyzing in more detail this connection.
The result is an article that is more relaxed than the previous three,
since the theorems that appear were already known.
As we are going to see,
thanks to the gradings we can prove them
in an alternative and very easy way,
in which the Arf invariant plays a key role.

This article possesses
a very unusual peculiarity among mathematical articles:
it can be explained in a talk.
In fact this chapter is an adaptation
of a lecture that I gave in february 2018
at the seminar that we carry out among the doctoral students.

\begin{objective}\label{obj_en:art_4}
In this chapter we will show how the theory of gradings
allows to give alternative proofs to classical theorems.
Thus,
we will review the basic properties of Clifford algebras
and we will see that,
in the real case,
these are determined by the Arf invariant.
\end{objective}

\section{Division gradings on Clifford algebras}

We start by recalling the definition of Clifford algebra.
Let $V$ be a vector space of finite dimension $N$
over a field $\mathbb{F}$ of characteristic different from $2$.
Later on we will restrict ourselves to the case $ \mathbb{F} = \mathbb{R} $,
but for the moment the arguments are valid in general.
Let $ Q : V \to \mathbb{F} $ be a quadratic form on $V$.
This is equivalent to say that there exists a symmetric bilinear form
$ B : V \times V \to \mathbb{F} $ satisfying,
for all $ v \in V $,
the following equation:
\begin{equation}
2 Q(v) = B(v,v)
\end{equation}
We can recover $B$ from $Q$,
because for all $ u,v \in V $ we have the following formula:
\begin{equation}\label{eq:bil_form_quadr_form}
B(u,v) = Q(u+v) - Q(u) - Q(v)
\end{equation}
Note that equation \eqref{eq:polarization}
is nothing more than equation \eqref{eq:bil_form_quadr_form}
written with multiplicative notation.

Let $ T(V) = \mathbb{F} \oplus V \oplus ( V \otimes V)
\oplus ( V \otimes V \otimes V ) \oplus \dots $
be the tensor algebra of $V$,
and let $I(Q)$ be the two-sided ideal of $T(V)$
generated by all the elements of the form
$ v \otimes v - Q(v)1 $ with $ v \in V $.
The Clifford algebra of $(V,Q)$ is $ \Cl(V,Q) = T(V) / I(Q) $.
Thus,
$\Cl(V,Q)$ is an associative algebra over the field $\mathbb{F}$,
but at the moment we do not know what is its dimension.
In the next two paragraphs we are going to prove that
the algebra $\Cl(V,Q)$ is unital,
that is,
there exists an element that is different from zero and neutral for the product,
which we denote $ 1 \in \Cl(V,Q) $ as always.
This question is equivalent to prove that
$ 1 \in T(V) $ does not belong to $I(Q)$,
and it is a bit more complicated than it seems.
As a corollary we obtain that the dimension of the algebra $\Cl(V,Q)$
is strictly greater than $0$.

First we observe that the Clifford algebra
satisfies the following universal property:
For any unital associative $\mathbb{F}$-algebra $\mathcal{A}$
endowed with an $\mathbb{F}$-linear map $ f : V \to \mathcal{A} $
such that $ f(v)^2 = Q(v) 1 $ for all $ v \in V $,
there exists a unique homomorphism of unital $\mathbb{F}$-algebras
$ \varphi : \Cl(V,Q) \to \mathcal{A} $ such that
$ f(v) = \varphi(v+I(Q)) $ for all $ v \in V $.
The existence of such a unital algebra $\mathcal{A}$
will imply that $ 1 \in T(V) $ does not belong to $I(Q)$,
since $\varphi(1+I(Q))$ is the unit of $\mathcal{A}$,
which is different from the zero of $\mathcal{A}$.

Now we are going to see,
following \cite[page 38]{Che1954},
that we can take as $\mathcal{A}$
the algebra of endomorphisms of the exterior algebra $\Lambda(V)$.
Let $ B_0 : V \times V \to \mathbb{F} $
be the symmetric bilinear form given by $ 2 B_0 = B $
(hence $ Q(v) = B_0(v,v) $ for all $ v \in V $).
For all $ v \in V $,
we consider the endomorphisms $L_v$ and $\delta_v$ of $\Lambda(V)$,
where $L_v$ is the left multiplication by $v$,
and $\delta_v$ is the antiderivation corresponding
to the linear form $B_0(v,\cdot)$.
That is,
$\delta_v$ is defined by $ \delta_v(1) = 0 $ and the following equation:
\begin{equation*}
\delta_v ( u_1 \wedge \dots \wedge u_k )
= \sum_{i=1}^{k} (-1)^{i-1} B_0(v,u_i)
u_1 \wedge \dots \wedge u_{i-1}
\wedge u_{i+1} \wedge \dots \wedge u_k
\end{equation*}
For all $ u \in V $ and all $ x \in \Lambda(V) $ we have that
$ \delta_v ( u \wedge x ) = B_0(v,u) x - u \wedge \delta_v(x) $.
From this formula we deduce that $ \delta_v^2 = 0 $
and that $ L_u \delta_v + \delta_v L_u = B_0(v,u) 1 $.
Therefore $ ( L_v + \delta_v )^2 = Q(v)1 $,
and we have constructed our $\mathcal{A}$.
This concludes the proof that $\Cl(V,Q)$ is a unital algebra.

By the definition of Clifford algebra,
in $\Cl(V,Q)$ we have that,
for all $ v \in V $,
the following formula is satisfied:
\begin{equation}\label{eq:Cl_squar}
v^2 = Q(v)
\end{equation}
Therefore $ uv + vu = B(u,v) $ for all $ u,v \in V $.
Let $ v_1 , \dots , v_N $ be an orthogonal basis of $V$,
for all $ i \neq j $ we have the following equation:
\begin{equation}\label{eq:Cl_anticomm}
v_i v_j = - v_j v_i
\end{equation}
If besides we have that $ Q(v_1) = \dots = Q(v_N) = 1 $,
we denote the Clifford algebra $\Cl_N(\mathbb{F})$.
In the case $ \mathbb{F} = \mathbb{R} $,
if $ Q(v_1) = \dots = Q(v_p) = +1 $ and
$ Q(v_{p+1}) = \dots = Q(v_N) = -1 $,
we denote the Clifford algebra $\Cl_{p,q}(\mathbb{R})$,
where $ p+q = N $.

Call $ v_I = v_{i_1} \cdots v_{i_r} $,
where $ 1 \leq i_1 < \dots < i_r \leq N $
and $ I = \{ i_1 , \allowbreak \dots , \allowbreak i_r \} $,
and also denote $ v_{\emptyset} = 1 $.
Due to equations \eqref{eq:Cl_squar} and \eqref{eq:Cl_anticomm},
$\Cl(V,Q)$ is linearly spanned by
$ \{ v_I \}_{ I \subseteq \{ 1,\dots,N \} } $.
Hence the dimension of $\Cl(V,Q)$ is at most $2^N$.
In fact $ \dim \Cl(V,Q) = 2^N $,
but the proof,
in the same way as the proof that $\Cl(V,Q)$ is a unital algebra,
is not trivial,
see for instance \cite[II.1.2]{Che1954} or \cite[theorem V.1.8]{Lam2005}.
Thanks to the following grading we are going to be able to give
an alternative and easy proof.

\begin{example}\label{exam:Cl_grad}
We can endow the Clifford algebra $\Cl(V,Q)$
with a grading by the abelian group $\mathbb{Z}_2^N$,
as in \cite[proposition 2.2]{AM2002}.
First we define a $\mathbb{Z}_2^N$-grading
on the tensor algebra $T(V)$
by means of the following assignment:
\begin{equation}
\deg v_i = ( \bar{0},\dots,\bar{0}, \overbrace{\bar{1}}^i
,\bar{0},\dots,\bar{0} ) \in \mathbb{Z}_2^N
\end{equation}
The ideal $I(Q)$ is generated by the elements of the form
$ v_i \otimes v_j + v_j \otimes v_i - B(v_i,v_j) $,
which are homogeneous because our basis $ v_1 , \dots , v_N $ is orthogonal,
hence it is a graded ideal.
Therefore $\Cl(V,Q)$ inherits the $\mathbb{Z}_2^N$-grading of $T(V)$
by means of $ \Cl(V,Q)_g = T(V)_g + I(Q) $.
\end{example}

\begin{proposition}
The dimension of a Clifford algebra $\Cl(V,Q)$ is $2^N$,
where $N$ is the dimension of the vector space $V$:
\begin{equation}
\dim \Cl(V,Q) = 2^N
\end{equation}
\end{proposition}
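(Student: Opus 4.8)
The upper bound $\dim \Cl(V,Q) \le 2^N$ has already been obtained from the spanning set $\{v_I\}$, so the whole task is to prove the reverse inequality, and here the $\mathbb{Z}_2^N$-grading of example \ref{exam:Cl_grad} is the decisive tool. The key observation is that the $2^N$ spanning elements $v_I$ lie in pairwise different homogeneous components: by construction $\deg v_I = \sum_{i \in I} \deg v_i$, and as $I$ ranges over all subsets of $\{1,\dots,N\}$ this assignment is a bijection onto $\mathbb{Z}_2^N$. Hence a nontrivial linear combination of distinct $v_I$ can vanish only if some individual $v_I$ is zero; in other words, the linear independence of $\{v_I\}$ reduces to the single statement that every homogeneous component is nonzero, i.e. that $v_I \neq 0$ for each $I$. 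This is exactly the simplification the grading buys us: it turns a global independence question into $2^N$ separate nonvanishing checks.

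To see that each $v_I$ is nonzero I would reuse the homomorphism $\varphi : \Cl(V,Q) \to \mathrm{End}(\Lambda(V))$ already produced above from the universal property, namely the one induced by $v \mapsto L_v + \delta_v$. The plan is to evaluate $\varphi(v_I)$ on the unit $1 \in \Lambda(V)$ and to check, peeling off one factor at a time, that
\begin{equation*}
\varphi(v_I)(1) = v_{i_1} \wedge \dots \wedge v_{i_r} .
\end{equation*}
Indeed, applying $L_{v_{i_r}} + \delta_{v_{i_r}}$ to $1$ yields $v_{i_r}$, and each further application of $L_{v_{i_s}} + \delta_{v_{i_s}}$ contributes the wedge term together with antiderivation terms of the form $B_0(v_{i_s},v_{i_t})(\cdots)$; since the basis $v_1,\dots,v_N$ is orthogonal and the indices in $I$ are distinct, all these correction terms vanish. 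The elements $v_{i_1} \wedge \dots \wedge v_{i_r}$ are precisely the standard monomials of the exterior algebra, which are nonzero and linearly independent; therefore each $v_I$ is nonzero (in fact the $\varphi(v_I)$ are already linearly independent), and combining this with the upper bound gives $\dim \Cl(V,Q) = 2^N$.

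I expect the only delicate point to be the vanishing of the antiderivation corrections in the computation of $\varphi(v_I)(1)$, which is exactly where orthogonality of the chosen basis is used and where the induction on the length of $I$ must be carried out attentively rather than waved through. A secondary matter to keep honest is the appeal to the exterior algebra $\Lambda(V)$ having dimension $2^N$ with independent standard monomials: this is a standard fact of multilinear algebra, established independently of the present statement (it is the case $Q = 0$), so invoking it introduces no circularity.
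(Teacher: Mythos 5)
Your proof is correct, but the way you establish $v_I \neq 0$ is genuinely different from the paper's. Both arguments begin identically: the grading of example \ref{exam:Cl_grad} places the $2^N$ elements $v_I$ in pairwise distinct homogeneous components, so linear independence reduces to each $v_I$ being nonzero. From there the paper splits into two cases: if $Q$ is nondegenerate it simply computes $v_I^2 = \pm Q(v_{i_1}) \cdots Q(v_{i_r}) \neq 0$ (using that $1 \neq 0$), and for degenerate $Q$ it builds the auxiliary algebra $\mathcal{A} = \mathbb{F}[X_1]/(X_1^2-Q(v_1)) \otimes \dots \otimes \mathbb{F}[X_N]/(X_N^2-Q(v_N)) \otimes \Cl_N(\mathbb{F})$ --- whose factor $\Cl_N(\mathbb{F})$ is covered by the nondegenerate case --- and maps $\Cl(V,Q)$ into it via the universal property, sending each $v_I$ to a visibly nonzero element. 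You instead reuse Chevalley's representation $\varphi : \Cl(V,Q) \to \mathrm{End}(\Lambda(V))$, $v \mapsto L_v + \delta_v$, already constructed in the unitality proof, and check by induction on $r$ that $\varphi(v_I)(1) = v_{i_1} \wedge \dots \wedge v_{i_r}$; the orthogonality of the basis kills every antiderivation correction term, exactly as you flag, and the induction is sound because each intermediate result is a single wedge monomial in basis vectors distinct from the next factor to be applied. Your route buys uniformity --- no case distinction between degenerate and nondegenerate $Q$ --- and in fact more than you need: since the standard monomials are linearly independent in $\Lambda(V)$, the evaluation $x \mapsto \varphi(x)(1)$ is injective on the span of the $v_I$, so you obtain linear independence of all the $v_I$ at once, making the grading reduction, strictly speaking, dispensable (this is essentially the classical proof in Chevalley's book). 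What the paper's route buys is that the grading does the real work, in keeping with the theme of the thesis: the only computational input is the nonvanishing of squares in the nondegenerate case, and the degenerate case is then disposed of by a purely algebraic universal-property trick rather than by any further analysis of operators on $\Lambda(V)$.
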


\begin{proof}
Since the $v_I$ belong to different homogeneous components,
they are linearly independent as long as they are different from zero.
Suppose first that the quadratic form $Q$ is nondegenerate,
that is,
$ Q(v_i) \neq 0 $ for all $ 1 \leq i \leq N $.
Then $ v_I^2 = v_{i_1} \cdots v_{i_r} v_{i_1} \cdots v_{i_r}
= \pm Q(v_{i_1}) \cdots Q(v_{i_r}) $.
This last term is different from zero,
because we have already seen that $ 1 \neq 0 $ in $\Cl(V,Q)$,
therefore $ v_I \neq 0 $ for all
$ I \subseteq \{ 1,\dots,N \} $.

On the other hand,
let us allow now some of the $Q(v_i)$ to be zero.
We consider the $\mathbb{F}$-algebra $\mathcal{A}$:
\begin{equation*}
\mathcal{A} = \mathbb{F}[X_1] / (X_1^2-Q(v_1)) \otimes \dots \otimes
\mathbb{F}[X_N] / (X_N^2-Q(v_N)) \otimes \Cl_N(\mathbb{F})
\end{equation*}
Note that the bilinear form $B'$ of $\Cl_N(\mathbb{F})$
is different from the bilinear form $B$ of $\Cl(V,Q)$.
Let $ u_1 , \dots , u_N $ be an orthonormal basis of $(V,B')$;
we have already proved that
$ \{ u_I \}_{ I \subseteq \{ 1,\dots,N \} } $
is a linear basis of $\Cl_N(\mathbb{F})$.
Let $ f : V \to \mathcal{A} $ be the linear map
that sends $v_i$ to the following element $w_i$:
\begin{equation*}
w_i = 1 \otimes \dots \otimes 1 \otimes \overbrace{X_i}^i
\otimes 1 \otimes \dots \otimes 1 \otimes u_i \in \mathcal{A}
\end{equation*}
We have that $ w_i w_j + w_j w_i = B(v_i,v_j) 1 $,
hence we can apply the universal property to obtain
a homomorphism of unital $\mathbb{F}$-algebras
from $\Cl(V,Q)$ to $\mathcal{A}$ that sends $v_i$ to $w_i$.
We conclude that $ v_I = v_{i_1} \cdots v_{i_r} $ cannot be zero,
because its image $ w_{i_1} \cdots w_{i_r} $ is different from zero,
and therefore $ \dim \Cl(V,Q) = 2^N $.
\end{proof}

Note that if the quadratic form $Q$ is nondegenerate,
then the $\mathbb{Z}_2^N$-grading on $\Cl(V,Q)$
of example \ref{exam:Cl_grad}
is a division grading and its homogeneous components have dimension $1$.

\begin{proposition}
The dimension of the center of a Clifford algebra $\Cl(V,Q)$
is given by the following equation,
where $N$ is the dimension of the vector space $V$:
\begin{equation}
\dim Z( \Cl(V,Q) ) =
\begin{cases}
	1 & \text{if } N \text{ is even} \\
	2 & \text{if } N \text{ is odd}
\end{cases}
\end{equation}
\end{proposition}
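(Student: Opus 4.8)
The plan is to read the center directly off the $\mathbb{Z}_2^N$-grading of Example~\ref{exam:Cl_grad}, working in the nondegenerate case where, as noted just above, this grading is a division grading with one-dimensional homogeneous components. The first observation is that the center is automatically a graded subspace. Since the grading group $\mathbb{Z}_2^N$ is abelian, for any central $z=\sum_g z_g$ and any homogeneous $y$ of degree $h$, comparing the degree-$gh$ components of $zy$ and $yz$ forces $z_g y = y z_g$ for every $g$; hence each $z_g$ is itself central. Combined with the previous proposition, which gives $\dim\Cl(V,Q)=2^N$ and therefore that every homogeneous component is spanned by a single nonzero $v_I$, this reduces the whole problem to deciding which of the $2^N$ basis elements $v_I$ are central.

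Next I would encode the commutation relations by a bicharacter. From $v_i v_j=-v_j v_i$ for $i\neq j$ and $v_i^2=Q(v_i)\neq 0$, one checks by moving one generator past $v_I$ that $v_I v_k=\beta(g_I,g_k)\,v_k v_I$, where $g_I=\deg v_I$ and $\beta$ is the alternating bicharacter on $T=\mathbb{Z}_2^N$ determined by $\beta(g_i,g_j)=-1$ for $i\neq j$ and $\beta(g_i,g_i)=1$. Here the hypothesis $Q(v_k)\neq 0$ is essential: it is precisely what prevents $v_I v_k$ and $v_k v_I$ from both vanishing when $k\in I$. By multiplicativity, $v_I$ is central if and only if $\beta(g_I,g_k)=1$ for all $k$, that is, if and only if $g_I\in\rad(\beta)$. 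Hence $\dim Z(\Cl(V,Q))=\lvert\rad(\beta)\rvert$.

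It then remains to compute $\rad(\beta)$. For $I\subseteq\{1,\dots,N\}$ one finds $\beta(g_I,g_j)=(-1)^{\lvert I\rvert}$ when $j\notin I$ and $\beta(g_I,g_j)=(-1)^{\lvert I\rvert-1}$ when $j\in I$. If $I$ is a nonempty proper subset, both kinds of index occur, forcing $\lvert I\rvert$ to be simultaneously even and odd, which is impossible; so the only candidates are $I=\emptyset$, giving $e\in\rad(\beta)$, and the full set $I=\{1,\dots,N\}$, which lies in $\rad(\beta)$ exactly when $N$ is odd, so that $\lvert I\rvert=N$ is odd. Therefore $\rad(\beta)=\{e\}$, of type~I, when $N$ is even, whereas $\rad(\beta)=\{e,\,f_\beta\}$ with $f_\beta=\deg(v_1\cdots v_N)$, of type~II, when $N$ is odd; this yields $\dim Z=1$ and $\dim Z=2$ respectively.

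The main obstacle is not the final radical computation, which is routine, but the structural step that the center is graded together with the input that each component is one-dimensional. This is what rules out any central element spread across several degrees and collapses an a priori infinite search down to checking the $2^N$ elements $v_I$. I would also take care to flag the role of nondegeneracy: it is this hypothesis that makes the grading a division grading and keeps $\beta$ an honest bicharacter, so that central homogeneous elements are governed by $\rad(\beta)$ rather than by accidental products that happen to vanish.
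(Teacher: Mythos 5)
Your proof is correct and takes essentially the same route as the paper's: both use the $\mathbb{Z}_2^N$-grading to see that the center is a graded subspace (since the grading group is abelian, centrality can be checked degree by degree), reduce the question to the basis elements $v_I$, and then observe that for a nonempty proper $I$ an index inside $I$ and an index outside $I$ commute with $v_I$ with opposite signs, leaving only $1$ and $v_1 \cdots v_N$ as candidates, the latter being central exactly when $N$ is odd. Your bicharacter-and-radical packaging of that last step is a cosmetic reformulation of the paper's direct ``one commutes, the other anticommutes'' argument, and your explicit restriction to nondegenerate $Q$ spells out a hypothesis the paper leaves implicit but genuinely needs (for $Q=0$ the Clifford algebra is the exterior algebra, whose center is larger than the statement allows).
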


\begin{proof}
The grading of example \ref{exam:Cl_grad} is useful again,
since it allows us to skip a step in the computation.
Indeed,
$Z(\Cl(V,Q))$ is a graded subspace because
the grading group $\mathbb{Z}_2^N$ is abelian
($ x = \sum_{g \in G} x_g \in Z(\mathcal{D}) $
if and only if $ x y_h = y_h x $
for all $ h \in G $ and all $ y_h \in \mathcal{D}_h $;
if and only if $ x_g y_h = y_h x_g $
for all $ g,h \in G $ and all $ y_h \in \mathcal{D}_h $;
if and only if $ x_g \in Z(\mathcal{D}) $
for all $ g \in G $),
therefore we only have to check which $v_I$ are in the center.
If $v_i$ is part of the product
$ v_I = v_{i_1} \cdots v_{i_r} $ but $v_j$ is not,
then one of them commutes with $v_I$ and the other anticommutes.
Hence the only candidates to belong to the center
are $1$ and $ v_1 \cdots v_N $.
This last element is central if and only if $N$ is odd.
\end{proof}

\section{Arf invariant}

From now on we will focus on the case $ \mathbb{F} = \mathbb{R} $.
Given a real number $x$ different from zero,
we denote by $\sign(x)$ its sign,
which can take the values $+1$ or $-1$.
Besides,
we define $ \sign(0) = 0 $.
Recall that
the Arf invariant of a map
$ \mu : T \to \{ \pm 1 \} $
defined on a finite set $T$
is the value which is assumed most often by the map:
$ \Arf(\mu) = \sign (
\vert \mu^{-1} (+1) \vert -
\vert \mu^{-1} (-1) \vert
) $.
One of the consequences of the classifications
of chapters \ref{ch:art_1} and \ref{ch:art_2}
is the following theorem.

\begin{theorem}\label{th:Cl_Arf_isom}
Let $\mathcal{D}$ be a finite-dimensional real associative algebra
whose center $Z(\mathcal{D})$ has dimension $1$ or $2$.
Suppose that we can endow $\mathcal{D}$
with a division grading
with homogeneous components of dimension $1$
and whose support $T$ is a group isomorphic to $\mathbb{Z}_2^N$.
Then equation \eqref{eq:sign_squar}
defines a quadratic form $ \mu : T \to \{ \pm 1 \} $
whose Arf invariant
determines the real algebra $\mathcal{D}$ up to isomorphism,
according to the following list.
\begin{itemize}
	\item If $ \dim Z (\mathcal{D}) = 1 $
	($ \Leftrightarrow N = 2m $),
	then:
	\begin{itemize}
		\item $ \Arf(\mu) = +1 $
		implies that
		$ \mathcal{D} \cong M_{2^m} (\mathbb{R}) $.
		\item $ \Arf(\mu) = -1 $
		implies that
		$ \mathcal{D} \cong M_{2^{m-1}} (\mathbb{H}) $.
	\end{itemize}
	\item If $ \dim Z (\mathcal{D}) = 2 $
	($ \Leftrightarrow N = 2m+1 $),
	then:
	\begin{itemize}
		\item $ \Arf(\mu) = +1 $
		implies that
		$ \mathcal{D} \cong M_{2^m} (\mathbb{R})
		\times M_{2^m} (\mathbb{R}) $.
		\item $ \Arf(\mu) = 0 $
		implies that
		$ \mathcal{D} \cong M_{2^m} (\mathbb{C}) $.
		\item $ \Arf(\mu) = -1 $
		implies that
		$ \mathcal{D} \cong M_{2^{m-1}} (\mathbb{H})
		\times M_{2^{m-1}} (\mathbb{H}) $.
	\end{itemize}
\end{itemize}
\end{theorem}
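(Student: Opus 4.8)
The plan is to deduce the statement directly from the classifications of chapters \ref{ch:art_1} and \ref{ch:art_2}, after translating the hypotheses into conditions on the bicharacter and the quadratic form attached to the grading. First I would observe that, since the support $T \cong \mathbb{Z}_2^N$ consists only of elements of order $1$ or $2$, the commutation map $\beta$ of equation \eqref{eq:comm_rel} takes values in $\{\pm 1\}$ and the sign-of-squares map $\mu$ of equation \eqref{eq:sign_squar} is everywhere defined; as in the dimension-one analysis of chapter \ref{ch:art_1}, $\beta$ is then an alternating bicharacter and $\mu$ a quadratic form with $\beta_\mu = \beta$. No element of order $4$ appears, so none of the complications flagged there for that situation intervene.

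The key preliminary step is to identify the center with the radical of $\beta$. Because the grading group is abelian, $Z(\mathcal{D})$ is a graded subspace, and a homogeneous generator $X_t$ is central precisely when $\beta(t,\cdot) \equiv 1$, that is, when $t \in \rad(\beta)$; hence $\dim Z(\mathcal{D}) = |\rad(\beta)|$. So the hypothesis $\dim Z(\mathcal{D}) = 1$ means $\beta$ is of type I and $\dim Z(\mathcal{D}) = 2$ means $\beta$ is of type II. Since $\beta$ descends to a nondegenerate alternating form on $T/\rad(\beta)$, whose dimension must be even, type I forces $N = 2m$ and type II forces $N = 2m+1$, which matches the parenthetical conditions.

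In the type I case the grading is exactly one of the dimension-one gradings on a central simple algebra classified in chapter \ref{ch:art_1}, so by theorems \ref{th:art_one_dim1_equiv} and \ref{th:art_one_dim1_isom} the underlying algebra is $M_{2^m}(\mathbb{R})$ when $\Arf(\mu) = +1$ and $M_{2^{m-1}}(\mathbb{H})$ when $\Arf(\mu) = -1$, giving the first half of the list. For the type II case I would first record the computation that governs the trichotomy: writing $\rad(\beta) = \{e, f_\beta\}$, the polarization formula \eqref{eq:polarization} yields $\mu(t f_\beta) = \mu(t)\mu(f_\beta)$ for every $t$, so the fixed-point-free involution $t \mapsto t f_\beta$ negates $\mu$ when $\mu(f_\beta) = -1$ and preserves it when $\mu(f_\beta) = +1$. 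Consequently $\Arf(\mu) = 0$ exactly when $\mu(f_\beta) = -1$, and $\Arf(\mu) = \pm 1$ exactly when $\mu(f_\beta) = +1$.

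Finally, the sign $\mu(f_\beta)$ of $X_{f_\beta}^2$ for the central homogeneous element $X_{f_\beta}$ fixes the structure: $\mu(f_\beta) = -1$ makes $Z(\mathcal{D}) \cong \mathbb{C}$ and places us in case (1-c), whence $\mathcal{D} \cong M_{2^m}(\mathbb{C})$ with $\Arf(\mu) = 0$, while $\mu(f_\beta) = +1$ makes $Z(\mathcal{D}) \cong \mathbb{R} \times \mathbb{R}$ and places us among the semisimple division gradings computed in chapter \ref{ch:art_2}, where $\Arf(\mu) = +1$ yields $M_{2^m}(\mathbb{R}) \times M_{2^m}(\mathbb{R})$ and $\Arf(\mu) = -1$ yields $M_{2^{m-1}}(\mathbb{H}) \times M_{2^{m-1}}(\mathbb{H})$; a dimension count ($\dim \mathcal{D} = |T| = 2^N$) confirms each target. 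Since the five listed algebras are pairwise non-isomorphic, distinguished by center dimension and by being simple or a product, the Arf invariant indeed determines $\mathcal{D}$. The main obstacle I anticipate is not any isolated calculation but ensuring that the classification invoked in the $\dim Z = 2$ case is the semisimple one from the second article rather than the purely simple one from the first, since the products $M \times M$ arise only once non-simple graded-division algebras are admitted.
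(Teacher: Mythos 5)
Your proposal is correct and takes essentially the same route as the paper: the thesis gives no separate proof of this theorem, presenting it as a direct consequence of the classifications of chapters \ref{ch:art_1} and \ref{ch:art_2}, and your argument---identifying $Z(\mathcal{D})$ with the span of the homogeneous components over $\rad(\beta)$, splitting by type, and resolving type II via the sign $\mu(f_\beta)$, with the non-simple products drawn from the second article---is exactly that deduction with the details made explicit. Your closing caveat about needing the semisimple classification of chapter \ref{ch:art_2} for the $M\times M$ cases matches the paper's own remark that the regular quadratic forms are precisely those corresponding to division gradings on simple algebras.
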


The grading of example \ref{exam:Cl_grad}
allows us to apply theorem \ref{th:Cl_Arf_isom}
to the Clifford algebra $\Cl_{p,q}(\mathbb{R})$.
Denote by $\mu_{p,q}$ the corresponding quadratic form.
The objective of this section is to compute
the Arf invariant of $\mu_{p,q}$
depending on the values of $p$ and $q$.

\begin{theorem}\label{th:Cl_period}
For all $ p,q \in \mathbb{N} \cup \{0\} $
we have the following isomorphisms of real algebras.
\begin{enumerate}
	\item $ \Cl_{p+1,q+1}(\mathbb{R})
	\cong \Cl_{p,q}(\mathbb{R})
	\otimes M_2(\mathbb{R}) $.
	\item $ \Cl_{p+2,q}(\mathbb{R})
	\cong \Cl_{q,p}(\mathbb{R})
	\otimes M_2(\mathbb{R}) $.
	\item $ \Cl_{p,q+2}(\mathbb{R})
	\cong \Cl_{q,p}(\mathbb{R})
	\otimes \mathbb{H} $.
\end{enumerate}
\end{theorem}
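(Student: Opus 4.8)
The plan is to prove all three isomorphisms by the same device: use the universal property of the Clifford algebra to produce an algebra homomorphism from the larger Clifford algebra on the left into the tensor product on the right, and then upgrade it to an isomorphism by a dimension count together with surjectivity. The point is that each two-dimensional right-hand factor is itself a Clifford algebra: $M_2(\mathbb{R}) \cong \Cl_{1,1}(\mathbb{R}) \cong \Cl_{2,0}(\mathbb{R})$ and $\mathbb{H} \cong \Cl_{0,2}(\mathbb{R})$. In part (1) I would realize $M_2(\mathbb{R})$ as $\Cl_{1,1}(\mathbb{R})$, in part (2) as $\Cl_{2,0}(\mathbb{R})$, and in part (3) use $\mathbb{H} = \Cl_{0,2}(\mathbb{R})$; this choice of Clifford structure on the small factor is exactly what accounts for the presence or absence of the swap $p \leftrightarrow q$.

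The key gadget is the volume element of the small factor. Writing $f_1, f_2$ for an orthogonal pair of generators of the two-dimensional Clifford factor and $\omega = f_1 f_2$, a short computation shows that $\omega$ anticommutes with both $f_1$ and $f_2$ and that $\omega^2 = -Q(f_1)Q(f_2)$. Thus $\omega^2 = +1$ when the factor is $\Cl_{1,1}(\mathbb{R})$ and $\omega^2 = -1$ when it is $\Cl_{2,0}(\mathbb{R})$ or $\Cl_{0,2}(\mathbb{R})$. With $e_1, \dots, e_{p+q}$ an orthogonal basis of the remaining factor, I would send the first $p+q$ generators of the big Clifford algebra to $E_i = e_i \otimes \omega$ and the last two generators to $1 \otimes f_1$ and $1 \otimes f_2$. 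Since $E_i^2 = \omega^2\, Q(e_i)$, the sign $\omega^2$ either preserves or reverses every $Q(e_i)$: when $\omega^2 = +1$ the signature of the base is untouched (part (1), no swap), and when $\omega^2 = -1$ each $+1$ becomes $-1$ and conversely (parts (2) and (3), swap $p \leftrightarrow q$). Adding the squares of the two extra generators $1 \otimes f_j$ then produces exactly the signature $(p+1,q+1)$, $(p+2,q)$, or $(p,q+2)$ claimed.

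With the images fixed, I would check the two relations that are equivalent to the hypothesis of the universal property for an orthogonal basis: that each generator squares to the correct scalar, and that distinct generators anticommute. The squares are handled above; anticommutation among the $E_i$ follows from $E_i E_j = e_i e_j \otimes \omega^2$, anticommutation of $E_i$ with $1 \otimes f_j$ follows from $\omega f_j = -f_j \omega$, and anticommutation of $1 \otimes f_1$ with $1 \otimes f_2$ is immediate. The universal property then yields an algebra homomorphism $\Phi$ from the left-hand algebra into the right-hand tensor product. Both sides have dimension $2^{p+q+2}$ by the dimension formula $\dim \Cl(V,Q) = 2^N$ proved earlier, so it suffices to show $\Phi$ is surjective. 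The image contains $1 \otimes f_1$ and $1 \otimes f_2$, hence all of $1 \otimes (\text{small factor})$, in particular $1 \otimes \omega$; multiplying $E_i$ by $1 \otimes \omega$ recovers $e_i \otimes 1$ up to the scalar $\omega^2 = \pm 1$, so the image also contains the base factor tensored with $1$. These generate the whole tensor product, so $\Phi$ is onto and therefore bijective.

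The method has no deep obstacle; the work is the signature bookkeeping. The only point that needs genuine care is making sure the sign of $\omega^2$ and the squares of the two extra generators combine to give precisely the advertised signatures, so that the map produced by the universal property lands in the correct Clifford algebra. This is where the distinction between realizing $M_2(\mathbb{R})$ as $\Cl_{1,1}(\mathbb{R})$ in part (1) versus $\Cl_{2,0}(\mathbb{R})$ in part (2), and the consequent appearance of the swap $p \leftrightarrow q$ in parts (2) and (3), must be tracked carefully. Everything else, namely the two equivalent relations, the dimension count, and surjectivity, is routine.
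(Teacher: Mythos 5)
Your proof is correct, but it is genuinely different from the one in the paper. You give the classical argument: realize the two-dimensional factor as $\Cl_{1,1}(\mathbb{R})$, $\Cl_{2,0}(\mathbb{R})$ or $\Cl_{0,2}(\mathbb{R})$, twist the base generators by the volume element $\omega$ (so that the sign of $\omega^2=-Q(f_1)Q(f_2)$ produces exactly the swap $p\leftrightarrow q$ in parts (2) and (3)), invoke the universal property, and finish with the dimension count $2^{p+q+2}$ plus surjectivity; all the sign bookkeeping you describe checks out, and restricting the verification of $f(v)^2=Q(v)1$ to an orthogonal basis is legitimate for the reason you indicate. The paper instead proves the theorem as an application of its grading machinery, which is the declared purpose of the fourth article: by theorem \ref{th:Cl_Arf_isom}, the division grading of example \ref{exam:Cl_grad} reduces each isomorphism to an equality of Arf invariants ($\Arf(\mu_{p+1,q+1})=\Arf(\mu_{p,q})$, $\Arf(\mu_{p+2,q})=\Arf(\mu_{q,p})$, $\Arf(\mu_{p,q+2})=-\Arf(\mu_{q,p})$), and these are established by a cancellation count over the squares $v_I^2$, sorting the homogeneous elements into four types according to whether $v_{N+1}$ or $v_{N+2}$ occurs. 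The trade-off: your route is self-contained and constructive, producing an explicit isomorphism and not depending on the classification results of the earlier chapters; the paper's route constructs no maps at all, only counts signs, but it is non-constructive and leans on theorem \ref{th:Cl_Arf_isom}, and its counting arguments are reused directly in the subsequent computation of $\Arf(\mu_{p,q})$.
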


\begin{proof}
By theorem \ref{th:Cl_Arf_isom},
the first isomorphism is equivalent to
$ \Arf(\mu_{p+1,q+1}) = \Arf(\mu_{p,q}) $.
We identify
$ \Cl_{p,q}(\mathbb{R}) \subseteq \Cl_{p+1,q+1}(\mathbb{R}) $
so that
$ v_1^2 = \dots = v_p^2 = +1 $,
$ v_{p+1}^2 = \dots = v_{p+q}^2 = -1 $,
$ v_{N+1}^2 = +1 $ and $ v_{N+2}^2 = -1 $,
where $ N = p+q $.
In order to find the Arf invariant of $\mu_{p,q}$
we compute $v_I^2$ for all $ I \subseteq \{ 1,\dots,N \} $
and we count the difference between
the number of $+1$ and the number of $-1$.
Analogously for $\Arf(\mu_{p+1,q+1})$,
but now we have four types of terms
as $I$ runs through the subsets of $ \{ 1,\dots,N \} $:
\begin{equation*}
\text{(1) the } v_I
\text{; (2) the } v_I v_{N+1}
\text{; (3) the } v_I v_{N+2}
\text{; (4) the } v_I v_{N+1} v_{N+2} \text{.}
\end{equation*}
Since $ ( v_I v_{N+1} )^2 = -( v_I v_{N+2} )^2 $,
the terms of the second type cancel out with
the terms of the third type.
Besides,
$ v_{N+1} v_{N+2} $ commutes with $v_I$
and $ ( v_{N+1} v_{N+2} )^2 = +1 $,
hence $ ( v_I v_{N+1} v_{N+2} )^2 = v_I^2 $,
and the contribution of the terms of the fourth type
is the same as that of the terms of the first type,
which is given by $\Arf(\mu_{p,q})$.

For the second isomorphism,
let $ u_1 , \dots , u_N $ and
$ v_1 , \dots , v_N , v_{N+1} , v_{N+2} $
be generating systems of
$\Cl_{q,p}(\mathbb{R})$ and $\Cl_{p+2,q}(\mathbb{R})$
satisfying equation \eqref{eq:Cl_anticomm}
and such that $ v_{N+1}^2 = v_{N+2}^2 = +1 $
and $ u_i^2 = - v_i^2 $ for all $ 1 \leq i \leq N $.
We have the same four types of terms
in $\Cl_{p+2,q}(\mathbb{R})$ as before.
This time $ ( v_{N+1} v_{N+2} )^2 = -1 $,
so the terms of the fourth type
cancel out with the terms of the first type.
Besides,
$ ( v_I v_{N+1} )^2 = ( v_I v_{N+2} )^2 $,
hence the contribution of the terms of the third type
is the same as that of the terms of the second type.
If $ \vert I \vert $ is odd,
then $ ( v_I v_{N+1} )^2 = - v_I^2 v_{N+1}^2 = u_I^2 $;
whereas if $ \vert I \vert $ is even,
also $ ( v_I v_{N+1} )^2 = v_I^2 v_{N+1}^2 = u_I^2 $.
We conclude that $ \Arf(\mu_{p+2,q}) = \Arf(\mu_{q,p}) $,
which implies that $ \Cl_{p+2,q}(\mathbb{R})
\cong \Cl_{q,p}(\mathbb{R}) \otimes M_2(\mathbb{R}) $.

The proof of the third isomorphism is analogous to that of the second,
but in this case $ v_{N+1}^2 = v_{N+2}^2 = -1 $,
therefore we obtain
$ \Arf(\mu_{p,q+2}) = - \Arf(\mu_{q,p}) $.
\end{proof}

\begin{lemma}\label{lemm:Cl_Arf}
$ \Arf(\mu_{p,0}) =
\sign (
\cos ( p \pi / 4 ) + \allowbreak
\sin ( p \pi / 4 )
) $
for all $ p \in \mathbb{N} \cup \{0\} $,
and
$ \Arf(\mu_{0,q}) =
\sign (
\cos ( - q \pi / 4 ) + \allowbreak
\sin ( - q \pi / 4 )
) $
for all $ q \in \mathbb{N} \cup \{0\} $.
\end{lemma}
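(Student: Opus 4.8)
The plan is to turn each Arf invariant into a single binomial sum and evaluate it with the binomial theorem over $\mathbb{C}$. Recall from the definition that $\Arf(\mu_{p,q}) = \sign\big(\sum_{I \subseteq \{1,\dots,N\}} \sign(v_I^2)\big)$, because this sum is exactly the difference between the number of subsets $I$ with $v_I^2 = +1$ and those with $v_I^2 = -1$. So the first step is to compute $v_I^2$ for a subset $I$ of cardinality $r = |I|$. Writing $\epsilon_j = \sign(v_j^2)$ and using equation \eqref{eq:Cl_anticomm} to reverse the second copy of $v_I$ (which costs $r(r-1)/2$ sign changes, after which the adjacent squares collapse), I obtain $v_I^2 = (-1)^{r(r-1)/2} \prod_{j \in I} \epsilon_j$; once all the $\epsilon_j$ coincide this depends on $I$ only through $r$.

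For $\mu_{p,0}$ every $\epsilon_j = +1$, so $\sign(v_I^2) = (-1)^{r(r-1)/2}$ and hence
\[
\sum_{I} \sign(v_I^2) = \sum_{r=0}^{p} \binom{p}{r} (-1)^{r(r-1)/2}.
\]
The key observation is the elementary identity $(-1)^{r(r-1)/2} = \operatorname{Re}(i^r) + \operatorname{Im}(i^r)$, verified by reducing $r$ modulo $4$ (both sides follow the pattern $+,+,-,-$). Since $\operatorname{Re}$ and $\operatorname{Im}$ are $\mathbb{R}$-linear and the binomial coefficients are real, the sum equals $\operatorname{Re}\big((1+i)^p\big) + \operatorname{Im}\big((1+i)^p\big)$. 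Writing $1+i = \sqrt{2}\,e^{i\pi/4}$ gives $(1+i)^p = 2^{p/2}\big(\cos(p\pi/4) + i\sin(p\pi/4)\big)$, so the sum is $2^{p/2}\big(\cos(p\pi/4) + \sin(p\pi/4)\big)$; the positive factor $2^{p/2}$ drops out under $\sign$, which is the first formula.

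For $\mu_{0,q}$ every $\epsilon_j = -1$, so $\sign(v_I^2) = (-1)^{r(r-1)/2}(-1)^{r} = (-1)^{r(r+1)/2}$. The analogous identity $(-1)^{r(r+1)/2} = \operatorname{Re}\big((-i)^r\big) + \operatorname{Im}\big((-i)^r\big)$ (pattern $+,-,-,+$) turns the sum into $\operatorname{Re}\big((1-i)^q\big) + \operatorname{Im}\big((1-i)^q\big) = 2^{q/2}\big(\cos(-q\pi/4) + \sin(-q\pi/4)\big)$, using $1-i = \sqrt{2}\,e^{-i\pi/4}$, and taking signs gives the second formula.

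I expect the only real obstacle to be bookkeeping: pinning down the sign $(-1)^{r(r-1)/2}$ in $v_I^2$, and recognizing that the $4$-periodic sign pattern is precisely $\operatorname{Re}+\operatorname{Im}$ of the powers of $\pm i$. As an independent cross-check one can argue recursively from Theorem \ref{th:Cl_period}: its relations give $\Arf(\mu_{p+2,0}) = \Arf(\mu_{0,p})$ and $\Arf(\mu_{0,q+2}) = -\Arf(\mu_{q,0})$, and together with the base cases $\Cl_{0,0}(\mathbb{R}) \cong \mathbb{R}$, $\Cl_{1,0}(\mathbb{R}) \cong \mathbb{R}\times\mathbb{R}$, $\Cl_{0,1}(\mathbb{R}) \cong \mathbb{C}$ (Arf invariants $+1$, $+1$, $0$) they determine both sequences; one then checks agreement with the $8$-periodic function $\sign(\cos(p\pi/4)+\sin(p\pi/4))$.
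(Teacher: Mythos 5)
Your proof is correct, and its first half --- reducing $\Arf(\mu_{p,0})$ and $\Arf(\mu_{0,q})$ to the binomial sums $\sum_{r=0}^{N}\binom{N}{r}(-1)^{r(r-1)/2}$ and $\sum_{r=0}^{N}\binom{N}{r}(-1)^{r(r+1)/2}$ via $v_I^2 = (-1)^{r(r-1)/2}\prod_{j\in I}\sign(v_j^2)$ --- is exactly what the paper does. Where you diverge is in evaluating these sums. The paper groups the terms by $r \bmod 4$ into the four sums $S_0,S_1,S_2,S_3$ and computes each of them in a separate lemma (lemma \ref{lemm:bin_sum}) by expanding $(1+t)^N$ in $\mathbb{R}[T]/(T^4-1)$ and pushing it through the Chinese-remainder isomorphism onto $\mathbb{R}\times\mathbb{R}\times\mathbb{C}$. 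You instead use the identities $(-1)^{r(r-1)/2} = \operatorname{Re}(i^r)+\operatorname{Im}(i^r)$ and $(-1)^{r(r+1)/2} = \operatorname{Re}\bigl((-i)^r\bigr)+\operatorname{Im}\bigl((-i)^r\bigr)$ (both correct, as a check modulo $4$ confirms) and apply the binomial theorem directly in $\mathbb{C}$. This is the same fourth-roots-of-unity idea, but your version retains only the complex factor of the paper's CRT decomposition: the factors $\mathbb{R}[X]/(X-1)$ and $\mathbb{R}[Y]/(Y+1)$ are precisely what produce the $2^{N-1}$ terms in each $S_i$, and these cancel identically in the two combinations $S_0+S_1-S_2-S_3$ and $S_0-S_1-S_2+S_3$ that this lemma needs. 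What you gain is economy: no auxiliary lemma, just a one-line sign identity plus the binomial theorem. What the paper's route gains is the closed form of each $S_i$ separately, which is strictly more information (of some independent interest), though superfluous here. Your recursive cross-check via theorem \ref{th:Cl_period} is also legitimate and not circular, since that theorem precedes this lemma and its proof establishes the relations $\Arf(\mu_{p+1,q+1})=\Arf(\mu_{p,q})$, $\Arf(\mu_{p+2,q})=\Arf(\mu_{q,p})$ and $\Arf(\mu_{p,q+2})=-\Arf(\mu_{q,p})$ directly, without using the present lemma.
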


\begin{proof}
Let us start with the case in which $p=N$ and $q=0$.
Since $ v_1^2 = \dots = v_N^2 = +1 $,
the value of $v_I^2$ only depends on $ \vert I \vert = r $.
Specifically,
$ v_I^2 = v_{i_1} \cdots v_{i_r} v_{i_1} \cdots v_{i_r}
= (-1)^{\binom{r}{2}} $.
Thus,
we can count the difference between
the number of $+1$ and the number of $-1$:
\begin{equation*}
\vert \mu_{N,0}^{-1} (+1) \vert -
\vert \mu_{N,0}^{-1} (-1) \vert
= \sum_{r=0}^{N} \binom{N}{r} (-1)^{(r-1)r/2}
= S_0 + S_1 - S_2 - S_3
\end{equation*}
In the last step we have abbreviated
the writing of the binomial sums
with the notation of lemma \ref{lemm:bin_sum}
($S_0$, $S_1$, $S_2$ and $S_3$).
Precisely applying that lemma
we obtain the desired formula for $\Arf(\mu_{p,0})$.

The case in which $p=0$ and $q=N$ is analogous,
but now $ v_I^2 = (-1)^{\binom{r}{2}} (-1)^r $.
Hence we obtain the following equation:
\begin{equation*}
\vert \mu_{0,N}^{-1} (+1) \vert -
\vert \mu_{0,N}^{-1} (-1) \vert
= \sum_{r=0}^{N} \binom{N}{r} (-1)^{(r-1)r/2+r}
= S_0 - S_1 - S_2 + S_3
\end{equation*}
\end{proof}

Let us compute,
by means of an algebraic argument,
the binomial sums $S_0$, $S_1$, $S_2$ and $S_3$
that we have used in the proof of lemma \ref{lemm:Cl_Arf}.

\begin{lemma}\label{lemm:bin_sum}
For any integer number $N$ greater than or equal to $1$
the following binomial formulas are satisfied:
\begin{equation*}
S_0 := \binom{N}{0} + \binom{N}{4} + \dots
+ \binom{N}{ 4 \lfloor \frac{N-0}{4} \rfloor + 0 }
= \frac{1}{2} \left( 2^{N-1} + 2^{N/2}
\cos \frac{N\pi}{4} \right)
\end{equation*}
\begin{equation*}
S_1 := \binom{N}{1} + \binom{N}{5} + \dots
+ \binom{N}{ 4 \lfloor \frac{N-1}{4} \rfloor + 1 }
= \frac{1}{2} \left( 2^{N-1} + 2^{N/2}
\sin \frac{N\pi}{4} \right)
\end{equation*}
\begin{equation*}
S_2 := \binom{N}{2} + \binom{N}{6} + \dots
+ \binom{N}{ 4 \lfloor \frac{N-2}{4} \rfloor + 2 }
= \frac{1}{2} \left( 2^{N-1} - 2^{N/2}
\cos \frac{N\pi}{4} \right)
\end{equation*}
\begin{equation*}
S_3 := \binom{N}{3} + \binom{N}{7} + \dots
+ \binom{N}{ 4 \lfloor \frac{N-3}{4} \rfloor + 3 }
= \frac{1}{2} \left( 2^{N-1} - 2^{N/2}
\sin \frac{N\pi}{4} \right)
\end{equation*}
\end{lemma}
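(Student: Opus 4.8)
The lemma asks for four closed-form evaluations of the binomial sums $S_0, S_1, S_2, S_3$, each grouping binomial coefficients $\binom{N}{r}$ according to the residue $r \bmod 4$. The right-hand sides involve $2^{N-1}$ together with trigonometric terms $2^{N/2}\cos(N\pi/4)$ and $2^{N/2}\sin(N\pi/4)$. The natural approach is to extract residue classes mod $4$ via roots of unity, using the fourth roots $1, i, -1, -i$. Let me sketch the plan.

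**Main strategy: roots of unity filter.** The plan is to start from the four expansions of $(1+\omega)^N$ as $\omega$ ranges over the fourth roots of unity $\{1,i,-1,-i\}$, via the binomial theorem:
\begin{equation*}
(1+\omega)^N = \sum_{r=0}^{N} \binom{N}{r} \omega^r.
\end{equation*}
Because $\omega^r$ depends only on $r \bmod 4$, taking suitable linear combinations of these four expansions isolates each residue class. Concretely, I would use the standard filter
\begin{equation*}
S_j = \frac{1}{4} \sum_{\omega^4 = 1} \omega^{-j} (1+\omega)^N,
\end{equation*}
which picks out exactly the coefficients $\binom{N}{r}$ with $r \equiv j \pmod 4$. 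Evaluating the four terms gives $(1+1)^N = 2^N$, $(1-1)^N = 0$ (for $N \geq 1$), and the two conjugate contributions from $\omega = \pm i$.

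**Key computational step.** The crucial evaluation is $(1+i)^N$ and its conjugate $(1-i)^N$. Writing $1+i = \sqrt{2}\, e^{i\pi/4}$ in polar form, I would obtain $(1+i)^N = 2^{N/2} e^{iN\pi/4} = 2^{N/2}(\cos(N\pi/4) + i\sin(N\pi/4))$, and similarly $(1-i)^N = 2^{N/2}(\cos(N\pi/4) - i\sin(N\pi/4))$. Substituting these into the filter formula for each $j$, the $\omega = 1$ term always contributes $2^N/4 = 2^{N-2}$, the $\omega = -1$ term vanishes, and the $\omega = \pm i$ terms combine: their sum produces a real cosine part and their difference (from the factors $i^{-j}$) produces a sine part. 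Collecting the factor $\tfrac14 \cdot 2^{N/2} \cdot 2 = \tfrac12 \cdot 2^{N/2}$ in front of the trigonometric terms, and $\tfrac14 \cdot 2^N = \tfrac12 \cdot 2^{N-1}$ in front of the constant, yields exactly the claimed expressions. Tracking the sign pattern of $\omega^{-j}$ for $j = 0,1,2,3$ gives the alternation $+\cos, +\sin, -\cos, -\sin$ that matches the four displayed formulas.

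**Anticipated obstacle.** The argument is essentially routine once the roots-of-unity filter is set up, so there is no deep obstacle; the only delicate point is bookkeeping the phase factors correctly. The main care is required in matching the real and imaginary parts: for odd $j$ the factor $i^{-j}$ rotates the contribution so that the imaginary parts of $(1\pm i)^N$ become real and feed the $\sin$ term, and I must verify that the conjugate pair $\omega = i$ and $\omega = -i$ always combines to a real number (as it must, since the $S_j$ are manifestly real). I would double-check the sign conventions by testing a small value such as $N = 1$ or $N = 2$ against the formulas to confirm the $\cos$/$\sin$ and $\pm$ assignments are correct before declaring the proof complete.
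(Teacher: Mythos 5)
Your proposal is correct and is essentially the paper's own argument: the paper's Chinese-remainder isomorphism $\mathbb{R}[T]/(T^4-1) \cong \mathbb{R}[X]/(X-1)\times\mathbb{R}[Y]/(Y+1)\times\mathbb{R}[Z]/(Z^2+1)$ is precisely evaluation of $(1+t)^N$ at the fourth roots of unity (with the conjugate pair $\pm i$ merged into the single real factor $\mathbb{R}[Z]/(Z^2+1)\cong\mathbb{C}$), and the explicit formulas for $\varphi^{-1}$ are exactly your filter coefficients $\tfrac{1}{4}\omega^{-j}$ in disguise. In both versions the computational heart is the same: $(1+1)^N=2^N$, $(1-1)^N=0$ for $N\geq 1$, and the polar evaluation $(1+i)^N = 2^{N/2}\left(\cos\frac{N\pi}{4}+i\sin\frac{N\pi}{4}\right)$.
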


\begin{proof}
We consider the following isomorphism $\varphi$
of $\mathbb{R}$-algebras
given by the chinese remainder theorem:
\begin{align*}
\varphi : \mathbb{R}[T] / (T^4-1)
& \longrightarrow \mathbb{R}[X] / (X-1)
\times \mathbb{R}[Y] / (Y+1)
\times \mathbb{R}[Z] / (Z^2+1)
\\ f(T)+(T^4-1)
& \longmapsto ( f(X)+(X-1)
, f(Y)+(Y+1) , f(Z)+(Z^2+1) )
\end{align*}
If we write $ i = Z+(Z^2+1) $
and $ t = T+(T^4-1) $,
then $\varphi$ is defined by
$ \varphi(1) = (1,1,1) $,
$ \varphi(t) = (1,-1,i) $,
$ \varphi(t^2) = (1,1,-1) $ and
$ \varphi(t^3) = (1,-1,-i) $.
Reciprocally,
$\varphi^{-1}$ is determined by
$ \varphi^{-1}(1,0,0) = (1+t+t^2+t^3)/4 $,
$ \varphi^{-1}(0,1,0) = (1-t+t^2-t^3)/4 $,
$ \varphi^{-1}(0,0,1) = (1-t^2)/2 $ and
$ \varphi^{-1}(0,0,i) = (t-t^3)/2 $.

On the one hand,
we can compute $(1+t)^N$ in $\mathbb{R}[t]$:
\begin{equation*}
(1+t)^N = \sum_{r=0}^N \binom{N}{r} t^r
= S_0 1 + S_1 t + S_2 t^2 + S_3 t^3
\end{equation*}
On the other hand,
we can apply first $\varphi$ to $(1+t)$,
which gives $(2,0,1+i)$,
and raise the result to the $N$-th power:
\begin{equation*}
(2,0,1+i)^N = (2^N,0,(1+i)^N) = \left( 2^N,0,
2^{N/2} \cos \frac{N\pi}{4} +
2^{N/2} \sin \frac{N\pi}{4} i \right)
\end{equation*}
Applying $\varphi^{-1}$ to this last expression,
and comparing it with the other one that we have,
we obtain the desired formulas.
\end{proof}

Putting together theorem \ref{th:Cl_period} and lemma \ref{lemm:Cl_Arf}
we obtain the final theorem of this chapter.
Of course,
equation \eqref{eq:Cl_Arf_pq_mod} is well known,
although neither my supervisor nor I had seen it written before
in the form of equation \eqref{eq:Cl_Arf_pq_trigon}.

\begin{theorem}
For all $ p,q \in \mathbb{N} \cup \{0\} $
the Arf invariant of $\mu_{p,q}$
is given by the following formulas:
\begin{align}
\Arf(\mu_{p,q}) & {} =
\sign \left( \cos \frac{ (p-q) \pi }{4}
+ \sin \frac{ (p-q) \pi }{4} \right)
\label{eq:Cl_Arf_pq_trigon}
\\ & {} =
\begin{cases}
	1	& \text{if } p-q+1 \equiv 1,2,3	\pmod{8} \\
	0	& \text{if } p-q+1 \equiv 0,4	\pmod{8} \\
	-1	& \text{if } p-q+1 \equiv 5,6,7	\pmod{8}
\end{cases}
\label{eq:Cl_Arf_pq_mod}
\end{align}
\end{theorem}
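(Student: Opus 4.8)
The plan is to reduce the computation of $\Arf(\mu_{p,q})$ for arbitrary $(p,q)$ to the base cases $\Arf(\mu_{p,0})$ and $\Arf(\mu_{0,q})$ already evaluated in lemma \ref{lemm:Cl_Arf}, and then to rewrite the resulting trigonometric expression into the congruence form \eqref{eq:Cl_Arf_pq_mod}. Only the first isomorphism of theorem \ref{th:Cl_period} is needed for the reduction.

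First I would take as starting point the identity $\Arf(\mu_{p+1,q+1}) = \Arf(\mu_{p,q})$, which is exactly the Arf equality established in the proof of the first isomorphism of theorem \ref{th:Cl_period}. Iterating it $\min(p,q)$ times shows that $\Arf(\mu_{p,q})$ depends only on the difference $p-q$: one gets $\Arf(\mu_{p,q}) = \Arf(\mu_{p-q,0})$ when $p \geq q$, and $\Arf(\mu_{p,q}) = \Arf(\mu_{0,q-p})$ when $q \geq p$.

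Next I would substitute these reduced indices into lemma \ref{lemm:Cl_Arf}. When $p \geq q$ the lemma gives $\Arf(\mu_{p-q,0}) = \sign(\cos((p-q)\pi/4) + \sin((p-q)\pi/4))$ directly. When $q \geq p$ it gives $\sign(\cos(-(q-p)\pi/4) + \sin(-(q-p)\pi/4))$, and since $-(q-p) = p-q$ this is the same expression; the two regimes therefore collapse to the single formula \eqref{eq:Cl_Arf_pq_trigon}. I would remark that the second and third isomorphisms of theorem \ref{th:Cl_period} play no role here, though they provide a consistency check.

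Finally, to pass from \eqref{eq:Cl_Arf_pq_trigon} to \eqref{eq:Cl_Arf_pq_mod}, I would apply the elementary identity $\cos\theta + \sin\theta = \sqrt{2}\,\sin(\theta + \pi/4)$ with $\theta = (p-q)\pi/4$, so that the argument becomes $\sqrt{2}\,\sin((p-q+1)\pi/4)$ and hence $\Arf(\mu_{p,q}) = \sign(\sin((p-q+1)\pi/4))$. Because $\sin$ has period $2\pi$, the sign of $\sin((p-q+1)\pi/4)$ depends only on $p-q+1$ modulo $8$; tabulating the eight residues yields $+1$ for $1,2,3$, the value $0$ for $0,4$, and $-1$ for $5,6,7$, which is precisely \eqref{eq:Cl_Arf_pq_mod}. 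I do not anticipate a genuine obstacle once theorem \ref{th:Cl_period} and lemma \ref{lemm:Cl_Arf} are available; the only points needing a little care are verifying that the $q \geq p$ regime produces the same trigonometric expression and carrying out the eight-case bookkeeping in the last step.
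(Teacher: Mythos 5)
Your proposal is correct and takes essentially the same route as the paper: the paper's proof of this theorem consists precisely of ``putting together'' theorem \ref{th:Cl_period} (whose proof establishes the equality $ \Arf(\mu_{p+1,q+1}) = \Arf(\mu_{p,q}) $ that you iterate) and lemma \ref{lemm:Cl_Arf}, exactly as you do, with the reduction to $ \Arf(\mu_{p-q,0}) $ or $ \Arf(\mu_{0,q-p}) $ matching the two cases the lemma was written to cover. The passage from \eqref{eq:Cl_Arf_pq_trigon} to \eqref{eq:Cl_Arf_pq_mod} via $ \cos\theta + \sin\theta = \sqrt{2} \, \sin ( \theta + \pi/4 ) $ is the standard elementary step the paper leaves implicit, and your eight-case check is accurate.
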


\selectlanguage{english}
\chapter*{Conclusions}
\addcontentsline{toc}{chapter}{Conclusions}

These have been the main results of my thesis.
First,
the classification up to isomorphism and equivalence
of the division gradings
on the simple real associative algebras.
Recall that this result completed
the classification up to isomorphism
of the (not necessarily division) gradings
on such algebras.
Second,
the classification up to isomorphism and equivalence
of the involutions
on the graded division simple real associative algebras.
And third,
the classification up to isomorphism
of the gradings
on the classical central simple real Lie algebras,
except on those of type $D_4$.

I think that I have been lucky in the doctorate,
since these classifications have been approachable.
The number of cases that appear is big enough
so that the problems are interesting,
and small enough
so that they can be solved in a thesis.
Besides we have obtained some beautiful results,
as theorem \ref{th:coars},
the division $ \mathbb{Z}_2 \times \mathbb{Z}_4 $-grading
on the algebra $ M_2(\mathbb{R}) \times \mathbb{H} $
of example \ref{exam:grad_semisimple},
or equation \eqref{eq:Cl_Arf_pq_trigon}.
Also we have been able to apply this theory of gradings
to Clifford algebras.

Of course,
we have not solved all the questions that
the classification of gradings on simple real Lie algebras
poses.
Our model of reducing
the classification of gradings on Lie algebras to
the classification of gradings on associative algebras with involution
covers a lot of the cases when the Lie algebra is of type $D_4$,
but not all of them.
The rest have been recently analyzed in \cite{EK2018arx},
therefore the classification in the type $D_4$ is already complete.

The future work is divided naturally into three ways.
One,
classify fine gradings up to equivalence
both on simple real associative algebras
and on classical central simple real Lie algebras,
and compute their respective Weyl groups.
Two,
classify gradings on simple complex Lie algebras,
regarded as algebras over the field of real numbers.
And three,
classify gradings on exceptional simple real Lie algebras.
Note that some cases of this third point have already been studied in
\cite{CDM2010} and \cite{DG2016}.

\selectlanguage{spanish}
\chapter*{Introducción}
\addcontentsline{toc}{chapter}{Introducción}

Mucho ha cambiado desde que mi director,
Alberto Elduque,
defendiera su tesis en 1984.
Él no publicó ninguno de sus resultados
mientras era estudiante de doctorado,
ya que no estaba claro si
una tesis que había aparecido previamente en revistas
era un trabajo original o no.
Así que lo primero que tuvo que hacer
después de la defensa
fue adaptar los capítulos de su tesis al formato artículo.

A día de hoy la situación es totalmente diferente,
y publicar es fundamental en un doctorado.
La investigación que hemos llevado a cabo
desde que empecé el doctorado en septiembre de 2013
ya ha aparecido en los cuatro artículos
listados en la página \pageref{ch:articles}.
Ahora lo que no tenemos claro es
qué debe contener el texto de una tesis.
Naturalmente cuando escribimos los artículos
lo hicimos de la mejor manera que supimos,
sin omitir ningún detalle e incluyendo
puntos claves, motivación, referencias históricas,
preliminares necesarios, etcétera.
Esperamos que la introducción que aparece en cada artículo sirva de guía
para saber cuáles son sus objetivos
y dónde encontrar los teoremas principales.
Mi intención es no repetir en esta tesis
lo que ya hemos expuesto en otros sitios,
y para ello mi amiga Eva me dio
una idea alternativa de qué podía escribir.

El objetivo de este texto es explicar,
utilizando un lenguaje lo más sencillo posible,
los resultados alcanzados a lo largo de estos cinco años.
Creo que al menos el capítulo \ref{cap:art_1},
correspondiente al primer artículo \cite{Rod2016},
puede entenderse incluso por alguien
que no haya estudiado una carrera de matemáticas.
Esto es algo excepcional en la investigación matemática,
de hecho tanto el segundo artículo \cite{BKR2018a}
como especialmente el tercero \cite{BKR2018b}
son mucho más técnicos.
Por otro lado el capítulo \ref{cap:art_4},
correspondiente al cuarto artículo \cite{ER2018},
se dirige a un público matemático,
pero no necesariamente especializado en álgebra.

El propósito general de mi tesis es clasificar
graduaciones en álgebras de Lie reales simples.
Una graduación es simplemente
una descomposición de un álgebra
compatible con sus operaciones
de suma, producto y multiplicación por escalares.
Para una enunciado preciso de los resultados alcanzados,
nos remitimos o bien a los objetivos \ref{obj_es:art_1},
\ref{obj_es:art_2}, \ref{obj_es:art_3} y \ref{obj_es:art_4},
o bien,
para un resumen más detallado,
a las secciones que los contienen
(secciones \ref{secc:art_1_mot_obj}, \ref{secc:art_2_obj},
\ref{secc:art_3_obj} y \ref{secc:art_4_intr}).
Además,
la sección \ref{secc:est_art} complementa este resumen.
En el primer y segundo artículo
clasificamos graduaciones en álgebras asociativas,
con la intención de aplicar estos resultados
a las álgebras de Lie en el tercer artículo,
tal y como explicamos en la sección \ref{secc:alg_inv}.
En cualquier caso remarquemos que cada una de estas clasificaciones
tiene interés por sí misma.
Finalmente en el cuarto artículo cambiamos a un tema más relajado,
ya que usamos resultados obtenidos en los artículos anteriores
para dar demostraciones alternativas a teoremas ya conocidos.

\selectlanguage{spanish}
\chapter{Preliminares}

\section{Álgebras y grupos}

Un álgebra es un conjunto dotado de
una estructura de espacio vectorial y de un producto bilineal.
Por ejemplo,
el conjunto $M_2(\mathbb{R})$ de
las matrices cuadradas de tamaño $ 2 \times 2 $
cuyas entradas son números reales,
con las operaciones habituales de suma y producto de matrices
y de multiplicación de un escalar por una matriz,
es un álgebra,
que además cumple las siguientes propiedades.
Es asociativa,
ya que $ (XY)Z = X(YZ) $ para todas matrices
$X,Y,Z$ en $M_2(\mathbb{R})$;
es real,
porque en este caso el cuerpo de escalares
son los números reales $\mathbb{R}$;
tiene dimensión $4$;
y es unitaria,
ya que existe un elemento neutro para el producto,
la matriz identidad $I$.

Otro ejemplo es el álgebra de cuaternios $\mathbb{H}$
\cite{Ham1844},
definámosla paso a paso.
Primero fijamos un espacio vectorial real $\mathbb{H}$ de dimensión $4$,
y llamamos $ \{ 1,i,j,k \} $ a los elementos de una de sus bases.
Esto se puede expresar mediante la siguiente fórmula:
\begin{equation}\label{ec:cuaternios}
\mathbb{H} = \mathbb{R} 1 \oplus \mathbb{R} i
\oplus \mathbb{R} j \oplus \mathbb{R} k
\end{equation}
$\mathbb{R}1$, $\mathbb{R}i$, $\mathbb{R}j$ y $\mathbb{R}k$
son subespacios vectoriales de dimensión $1$,
y con la ecuación \eqref{ec:cuaternios} indicamos que
su suma es directa y que es todo $\mathbb{H}$.
El producto lo definimos en los elementos de la base
mediante la tabla de multiplicación de la figura \ref{fig_es:tabla_mult_H},
y lo extendemos a todos los cuaternios por bilinealidad.
Por ejemplo,
$ (1+2i)(3i+j) = 3i+j+6i^2+2ij = -6+3i+j+2k $.

\begin{figure}
\begin{equation*}
\begin{array}{c|cccc}
	\cdot & 1 & i & j & k \\
	\hline
	1 & 1 & i & j & k \\
	i & i & -1 & k & -j \\
	j & j & -k & -1 & i \\
	k & k & j & -i & -1
\end{array}
\end{equation*}
\caption{Tabla de multiplicación de los cuaternios $\mathbb{H}$.}
\label{fig_es:tabla_mult_H}
\end{figure}

Podríamos comprobar,
mediante el análisis de $ 4 \cdot 4 \cdot 4 = 64 $ casos,
que el producto de cuaternios es asociativo,
pero afortunadamente hay una forma más corta de demostrarlo.
Consideramos las siguientes matrices en $M_2(\mathbb{C})$:
\begin{equation}\label{ec:cuatern_en_M2C}
I = \begin{pmatrix} 1 & 0 \\ 0 & 1 \end{pmatrix}
\quad
A_i = \begin{pmatrix} i & 0 \\ 0 & -i \end{pmatrix}
\quad
A_j = \begin{pmatrix} 0 & 1 \\ -1 & 0 \end{pmatrix}
\quad
A_k = \begin{pmatrix} 0 & i \\ i & 0 \end{pmatrix}
\end{equation}
Observamos que la tabla de multiplicación de estas cuatro matrices
es la misma que la de la figura \ref{fig_es:tabla_mult_H},
por ejemplo $ A_i A_j = A_k $ y $ A_i^2 = -I $.
Como el producto de matrices es asociativo,
también debe serlo el de cuaternios.

Notemos que el álgebra de cuaternios $\mathbb{H}$
no solo es unitaria,
sino que además es un álgebra de división,
ya que dado un cuaternio no nulo $ X = a+bi+cj+dk $,
existe un cuaternio $X^{-1}$ tal que $ XX^{-1} = 1 = X^{-1}X $.
Específicamente,
$X^{-1}$ está determinado por la siguiente fórmula:
\begin{equation}
(a+bi+cj+dk)^{-1} = \frac{1}{a^2+b^2+c^2+d^2} \, (a-bi-cj-dk)
\end{equation}

Un álgebra se dice simple si
no contiene ideales biláteros propios y su producto no es trivial.
Más adelante daremos caracterizaciones de las álgebras simples
que nos resultarán más manejables.
Simplemente tengamos presente que dichas álgebras son muy importantes,
porque en cierto sentido son los átomos indivisibles
a partir de los cuales se construyen una gran cantidad de álgebras.

Finalmente recordemos que
un grupo es un conjunto dotado de una operación binaria tal que:
es asociativa,
existe un elemento neutro $e$,
y todo elemento tiene inverso.
Si además la operación es conmutativa,
se dice que el grupo es abeliano.
Por ejemplo,
la figura \ref{fig_es:tabla_mult_Z4_Z22} nos muestra
las tablas de multiplicación de dos grupos abelianos de $4$ elementos.
El primero es
$ \mathbb{Z}_4 = \langle a \mid a^4 = e \rangle $,
y el segundo es
$ \mathbb{Z}_2^2 = \langle a,b \mid a^2=e=b^2 , \allowbreak \, ab=ba \rangle $.

\begin{figure}
\begin{equation*}
\begin{array}{c|cccc}
	\cdot & e & a & a^2 & a^3 \\
	\hline
	e & e & a & a^2 & a^3 \\
	a & a & a^2 & a^3 & e \\
	a^2 & a^2 & a^3 & e & a \\
	a^3 & a^3 & e & a & a^2
\end{array}
\qquad \qquad
\begin{array}{c|cccc}
	\cdot & e & a & b & ab \\
	\hline
	e & e & a & b & ab \\
	a & a & e & ab & b \\
	b & b & ab & e & a \\
	ab & ab & b & a & e
\end{array}
\end{equation*}
\caption{A la izquierda la tabla de multiplicación del grupo $\mathbb{Z}_4$,
a la derecha la del grupo $\mathbb{Z}_2^2$.}
\label{fig_es:tabla_mult_Z4_Z22}
\end{figure}

\section{Clasificaciones}

Como hemos mencionado en la introducción,
el objetivo de mi tesis es clasificar graduaciones,
pero para hacer una clasificación
lo primero que debe quedar claro es
cuándo entendemos que dos objetos son iguales.
Esta pregunta no es en absoluto trivial.

Por ejemplo,
consideremos el álgebra compleja $\mathbb{H}_{\mathbb{C}}$,
obtenida al extender escalares
del cuerpo $\mathbb{R}$ al cuerpo $\mathbb{C}$
en el álgebra real de cuaternios $\mathbb{H}$.
Es decir,
los elementos $X=a+bi+cj+dk$ de $\mathbb{H}_{\mathbb{C}}$
siguen satisfaciendo la tabla de multiplicación
de la figura \ref{fig_es:tabla_mult_H},
pero en este caso los coeficientes $a$, $b$, $c$, $d$
son números complejos en lugar de reales.
Ahora comparamos las álgebras $\mathbb{H}_{\mathbb{C}}$ y $M_2(\mathbb{C})$.
Aunque las dos son álgebras asociativas complejas de dimensión $4$,
en principio parecen bastante distintas.
Sin embargo definamos una aplicación
$ f : \mathbb{H}_{\mathbb{C}} \to M_2(\mathbb{C}) $
como $ f(a+bi+cj+dk) = a I + b A_i + c A_j + d A_k $,
donde $I$, $A_i$, $A_j$ y $A_k$ son las matrices complejas
de la ecuación \eqref{ec:cuatern_en_M2C}.
Observamos que esta aplicación es biyectiva
y conmuta con las operaciones de ambas álgebras complejas:
$ f(X+Y) = f(X)+f(Y) $,
$ f( \lambda X ) = \lambda f(X) $,
y $ f(XY) = f(X)f(Y) $
para todos elementos $X,Y$ en $\mathbb{H}_{\mathbb{C}}$
y todo escalar $\lambda$ en $\mathbb{C}$.
Por lo tanto podemos afirmar que
$\mathbb{H}_{\mathbb{C}}$ y $M_2(\mathbb{C})$
son la misma álgebra,
pero escrita con diferentes alfabetos,
y que la aplicación $f$ es un diccionario
que nos permite traducir del uno al otro.
Formalmente se dice que las álgebras complejas
$\mathbb{H}_{\mathbb{C}}$ y $M_2(\mathbb{C})$ son isomorfas,
escrito $ \mathbb{H}_{\mathbb{C}} \cong M_2(\mathbb{C}) $,
y que $f$ es un isomorfismo.

Por otro lado,
las álgebras reales $\mathbb{H}$ y $M_2(\mathbb{R})$ no son isomorfas,
ya que la primera es un álgebra de división pero la segunda no.
En efecto,
las matrices cuyo determinante vale $0$ no son invertibles.

Volvamos ahora a los grupos $\mathbb{Z}_4$ y $\mathbb{Z}_2^2$
de la figura \ref{fig_es:tabla_mult_Z4_Z22}.
Aunque ambos son grupos abelianos de $4$ elementos,
no son isomorfos,
porque en $\mathbb{Z}_2^2$
el cuadrado de cualquier elemento es el elemento neutro $e$,
mientras que en $\mathbb{Z}_4$
hay elementos de orden $4$ ($a$ y $a^3$).
Con un poco de paciencia se puede comprobar que
todo grupo de $4$ elementos es isomorfo a uno de estos dos.

Un ejemplo muy ilustrativo de clasificación es la de las
álgebras asociativas reales de dimensión finita y simples.
Consta de dos partes.
Por un lado Frobenius clasifica en \cite{Fro1878} las
álgebras asociativas reales de dimensión finita y de división.
Resulta que, salvo isomorfismo, solo hay tres:
el cuerpo de los números reales $\mathbb{R}$,
el cuerpo de los números complejos $\mathbb{C}$,
y el álgebra de cuaternios $\mathbb{H}$.
La parte más fácil de la clasificación es comprobar que en efecto
estas tres álgebras cumplen las propiedades requeridas de
asociatividad, divisibilidad, etcétera.
El verdadero problema es demostrar que
no existen más álgebras con estas características,
por ello este resultado se conoce como teorema de Frobenius.
Finalmente hay que asegurarse de que no hay redundancias;
en este caso podemos decir que las tres álgebras no son isomorfas
ya que tienen diferente dimensión:
$1$, $2$ y $4$ respectivamente.

Por otro lado,
el teorema de Wedderburn \cite{Wed1908} y Artin \cite{Art1927}
dice que toda
álgebra asociativa de dimensión finita y simple
se puede escribir como un álgebra de matrices cuadradas
con entradas en un álgebra de división.
Además este resultado es realmente una clasificación,
ya que también establece
cuándo dos de estas álgebras de matrices son isomorfas.
Esto ocurre si y solo si el tamaño de las matrices coincide
y las correspondientes álgebras de división son isomorfas.
Juntando los dos teoremas podemos dar la lista,
exhaustiva y sin repeticiones,
de las álgebras asociativas reales de dimensión finita y simples:
\begin{equation}\label{ec:lista_alg_simples}
\begin{array}{cccc}
	\mathbb{R} , & M_2(\mathbb{R}) , & M_3(\mathbb{R}) , & \dots \\
	\mathbb{C} , & M_2(\mathbb{C}) , & M_3(\mathbb{C}) , & \dots \\
	\mathbb{H} , & M_2(\mathbb{H}) , & M_3(\mathbb{H}) , & \dots
\end{array}
\end{equation}

\section{Graduaciones}

Las álgebras por sí mismas han sido estudiadas a fondo desde hace tiempo.
El propósito de las graduaciones es ir un paso más lejos
e investigar de qué maneras se pueden descomponer las álgebras.
Para que una descomposición sea válida,
le vamos a exigir que sea compatible con las operaciones del álgebra.

La definición precisa de graduación
en un álgebra $\mathcal{D}$ por un grupo $G$
consta de dos partes.
Primero,
la graduación como tal es simplemente una descomposición
del espacio vectorial subyacente de $\mathcal{D}$
en suma directa de subespacios vectoriales,
de manera que cada subespacio vectorial
está indexado por un elemento del grupo $G$.
Esto lo expresamos con notación matemática de la siguiente manera:
\begin{equation}\label{ec:grad_cond_suma}
\mathcal{D} = \bigoplus_{ g \in G } \mathcal{D}_g
\end{equation}
El subespacio vectorial $\mathcal{D}_g$ se llama
componente homogénea de grado $g$,
y sus vectores $ X \in \mathcal{D}_g $ se dicen
elementos homogéneos de grado $g$,
escrito $ \deg X = g $.
La ecuación \eqref{ec:grad_cond_suma} quiere decir que la descomposición
es compatible con la suma y con la multiplicación por escalares;
pero si $\mathcal{D}$ es un álgebra graduada,
también se debe respetar el producto.
Así,
como segunda condición exigimos que el producto
de un elemento homogéneo de grado $g$
por un elemento homogéneo de grado $h$
sea un elemento homogéneo de grado $gh$.
Si esto lo escribimos con símbolos,
diríamos que para todos $g,h$ en $G$
se debe satisfacer la siguiente fórmula:
\begin{equation}\label{ec:grad_cond_prod}
\mathcal{D}_g \mathcal{D}_h \subseteq \mathcal{D}_{gh}
\end{equation}

\begin{ejemplo}\label{ejem:grad_M2R_dim1}
Podemos descomponer la ya mencionada álgebra $M_2(\mathbb{R})$
como suma directa de cuatro subespacios vectoriales de dimensión $1$
mediante la siguiente ecuación:
\begin{equation}\label{ec:grad_M2R_dim1}
M_2(\mathbb{R}) =
	\mathbb{R} \begin{pmatrix} 1 & 0 \\ 0 & 1 \end{pmatrix}
	\oplus
	\mathbb{R} \begin{pmatrix} -1 & 0 \\ 0 & 1 \end{pmatrix}
	\oplus
	\mathbb{R} \begin{pmatrix} 0 & 1 \\ 1 & 0 \end{pmatrix}
	\oplus
	\mathbb{R} \begin{pmatrix} 0 & -1 \\ 1 & 0 \end{pmatrix}
\end{equation}
Si asignamos a estos cuatro subespacios
los grados $e$, $a$, $b$, $ab$ en el grupo
$ \mathbb{Z}_2^2 = \langle a,b \mid a^2=e=b^2 , \allowbreak \, ab=ba \rangle $,
obtenemos una graduación.
En efecto,
analizando los $ 4 \cdot 4 = 16 $ casos posibles,
vemos que se cumple la ecuación \eqref{ec:grad_cond_prod}.
\end{ejemplo}

\begin{ejemplo}\label{ejem:grad_H_dim1}
La propia definición de los cuaternios $\mathbb{H}$
nos sugiere una graduación.
De nuevo asignamos grados en el grupo $\mathbb{Z}_2^2$,
esta vez a los cuatro subespacios de la ecuación \eqref{ec:cuaternios},
$ \mathbb{H} = \mathbb{R} 1 \oplus \mathbb{R} i
\oplus \mathbb{R} j \oplus \mathbb{R} k $.
Comparando la tabla de multiplicación de $\mathbb{H}$
(figura \ref{fig_es:tabla_mult_H})
con la de $\mathbb{Z}_2^2$
(figura \ref{fig_es:tabla_mult_Z4_Z22}),
comprobamos que se respeta el producto.
\end{ejemplo}

\begin{ejemplo}\label{ejem:grad_C_dim1}
Otra graduación natural es la que se obtiene en los números complejos
al separar entre parte real y parte imaginaria:
\begin{equation}\label{ec:grad_C_dim1}
\mathbb{C} = \mathbb{R} 1 \oplus \mathbb{R} i
\end{equation}
En este caso el grupo graduador es
$ \mathbb{Z}_2 = \langle a \mid a^2=e \rangle $.
\end{ejemplo}

Señalemos un matiz matemático.
Formalmente,
la definición de graduación permite que haya
componentes homogéneas de dimensión $0$.
Por este motivo se define el soporte de una graduación,
que es el conjunto de los elementos del grupo graduador
cuyas componentes homogéneas son no nulas.

Dadas dos graduaciones en una misma álgebra,
puede ocurrir que la primera sea un refinamiento de la segunda,
o dicho de otra forma,
que la segunda sea un engrosamiento de la primera.
Esto sucede cuando cada componente homogénea de la primera graduación
está contenida en una componente homogénea de la segunda.

\begin{ejemplo}\label{ejem:grad_M2R_dim2}
Un engrosamiento de la graduación
en el álgebra $M_2(\mathbb{R})$
del ejemplo \ref{ejem:grad_M2R_dim1}
es la graduación por el grupo $\mathbb{Z}_2$
definida por la siguiente ecuación:
\begin{equation}\label{ec:grad_M2R_dim2}
M_2(\mathbb{R}) =
\left[
	\mathbb{R} \begin{pmatrix} 1 & 0 \\ 0 & 1 \end{pmatrix}
	\oplus
	\mathbb{R} \begin{pmatrix} 0 & -1 \\ 1 & 0 \end{pmatrix}
\right]
	\oplus
\left[
	\mathbb{R} \begin{pmatrix} 0 & 1 \\ 1 & 0 \end{pmatrix}
	\oplus
	\mathbb{R} \begin{pmatrix} -1 & 0 \\ 0 & 1 \end{pmatrix}
\right]
\end{equation}
Observamos que en este caso hay
dos componentes homogéneas de dimensión $2$ cada una.
\end{ejemplo}

Las graduaciones de los ejemplos
\ref{ejem:grad_M2R_dim1}, \ref{ejem:grad_H_dim1} y \ref{ejem:grad_C_dim1}
tienen todas sus componentes homogéneas de dimensión $1$,
con lo que no se pueden refinar más.
Por este último motivo se dice que son finas.
Recíprocamente la graduación trivial,
que engloba toda el álgebra en una única componente homogénea,
es siempre la más gruesa.

Un álgebra asociativa unitaria y graduada
se dice que es graduada de división
si todo elemento homogéneo no nulo tiene inverso.
Las graduaciones de los ejemplos
\ref{ejem:grad_M2R_dim1}, \ref{ejem:grad_H_dim1},
\ref{ejem:grad_C_dim1} y \ref{ejem:grad_M2R_dim2}
son de división.

Recordemos que nuestro objetivo va a ser clasificar graduaciones.
Por tanto,
dadas dos álgebras graduadas
$ \mathcal{D} = \bigoplus_{ g \in G } \mathcal{D}_g $ y
$ \mathcal{E} = \bigoplus_{ h \in H } \mathcal{E}_h $,
debemos dejar claro si las consideramos iguales o no.
Hay dos formas naturales de definir esta igualdad,
dependiendo de si el grupo graduador juega un papel secundario
o si forma parte de la definición,
así que para evitar confusiones
llamaremos a una equivalencia y a la otra isomorfismo.
Decimos que
las álgebras graduadas $\mathcal{D}$ y $\mathcal{E}$ son equivalentes
si existe un isomorfismo de álgebras
$ \psi : \mathcal{D} \to \mathcal{E} $
satisfaciendo la siguiente condición:
para todo $g$ en el soporte de $\mathcal{D}$
existe $ h \in H $ tal que
$ \psi(\mathcal{D}_g) = \mathcal{E}_h $.
Si $G=H$,
podemos endurecer la condición requerida a que
$ \psi(\mathcal{D}_g) = \mathcal{E}_g $
para todo $ g \in G $,
en cuyo caso decimos que
las álgebras graduadas $\mathcal{D}$ y $\mathcal{E}$ son isomorfas.

\section{Productos tensoriales y formas cuadráticas}

Finalmente repasemos algunos conceptos más que nos van a aparecer.
Empezamos con el producto tensorial graduado.
Recordamos que,
si $ \{ X_1 , \allowbreak X_2 , \allowbreak \dots , \allowbreak X_r \} $
es una base de un espacio vectorial $\mathcal{D}$
y $ \{ Y_1 , \allowbreak Y_2 , \allowbreak \dots , \allowbreak Y_s \} $
es una base de otro espacio vectorial $\mathcal{E}$,
entonces el producto tensorial $ \mathcal{D} \otimes \mathcal{E} $
es un espacio vectorial del que una base es
$ \{ X_i \otimes Y_j \mid 1 \leq i \leq r , \allowbreak \, 1 \leq j \leq s \} $.
Notemos que el cuerpo de escalares
juega un papel importante en esta construcción,
por lo que lo indicaremos con un subíndice del símbolo $\otimes$
cuando sea distinto de los números reales $\mathbb{R}$.
Por ejemplo,
el espacio vectorial real $ \mathbb{C} \otimes M_2(\mathbb{C}) $
tiene dimensión $ 2 \cdot 8 = 16 $,
mientras que el espacio vectorial complejo
$ \mathbb{C} \otimes_{\mathbb{C}} M_2(\mathbb{C}) $
tiene dimensión $ 1 \cdot 4 = 4 $.

Si $\mathcal{D}$ y $\mathcal{E}$
son no solo espacios vectoriales
sino álgebras,
entonces $ \mathcal{D} \otimes \mathcal{E} $
también tiene estructura de álgebra,
con el producto componente a componente:
$ ( X \otimes Y ) ( X' \otimes Y' ) = (XX') \otimes (YY') $.
Por lo tanto,
si el álgebra $\mathcal{D}$ está graduada por el grupo $G$
y el álgebra $\mathcal{E}$ está graduada por el grupo $H$,
resulta natural que definamos una graduación
en el álgebra $ \mathcal{D} \otimes \mathcal{E} $
por el grupo $ G \times H $
diciendo que la componente homogénea de grado $(g,h)$
es simplemente $ \mathcal{D}_g \otimes \mathcal{E}_h $.

Cambiemos ahora al tema de las formas cuadráticas.
Un bicarácter alternado ($\mathbb{R}$-valuado) en un grupo $T$
es una aplicación $ \beta : T \times T \to \mathbb{R} \setminus \{ 0 \} $
que satisface que
$ \beta(uv,w) = \beta(u,w) \beta(v,w) $,
$ \beta(u,vw) = \beta(u,v) \beta(u,w) $,
y $ \beta(u,u) = 1 $
para todos $ u,v,w \in T $.
Si el grupo $T$ es finito,
entonces $\beta$ solo puede tomar los valores $+1$ y $-1$.

El radical de $\beta$ es el conjunto
$ \rad(\beta) =
\{ t \in T \mid \beta(u,t) = 1
\allowbreak \: \text{para} \allowbreak \: \text{todo} \allowbreak \:
u \in T \} $.
Diremos que $\beta$ es de tipo I
si el único elemento de su radical es el elemento neutro del grupo,
$e$.
Análogamente,
diremos que $\beta$ es de tipo II
si $\rad(\beta)$ tiene dos elementos;
en este caso el elemento del radical
que no es el elemento neutro
lo denotaremos $f_{\beta}$.

Una forma cuadrática en $T$
es una aplicación $ \mu : T \to \{ \pm 1 \} $
tal que $\beta_{\mu}$ es un bicarácter alternado,
donde $ \beta_{\mu} : T \times T \to \{ \pm 1 \} $ es la aplicación,
llamada polarización de $\mu$,
definida por la siguiente fórmula:
\begin{equation}\label{ec:polarizacion}
\beta_{\mu}(u,v) = \mu(uv) \mu(u)^{-1} \mu(v)^{-1}
\end{equation}

Subrayemos que las formas cuadráticas
son objetos bien conocidos dentro de las matemáticas.
Normalmente se escriben con notación aditiva,
como en la ecuación \eqref{ec:form_bil_form_cuadr},
pero en este texto hemos preferido usar notación multiplicativa,
ya que será más conveniente para nuestros propósitos.
Notemos también que los inversos que aparecen en
la ecuación \eqref{ec:polarizacion}
no tienen efecto,
pero así es como se escribe habitualmente esta fórmula.

El invariante de Arf de una aplicación
$ \mu : T \to \{ \pm 1 \} $
definida en un conjunto finito $T$
es el valor que toma más a menudo dicha aplicación.
Es decir,
si hay más elementos en $T$ de valor $+1$ que $-1$,
entonces el invariante de Arf de $\mu$ es $+1$.
Pero si hay menos,
entonces es $-1$.
También puede ocurrir que haya el mismo número,
y entonces decimos que el invariante de Arf es $0$.

\selectlanguage{spanish}
\chapter{Primer artículo}\label{cap:art_1}

\begin{enumerate}

\item[\cite{Rod2016}]
A. Rodrigo-Escudero,
Classification of division gradings on fi\-nite-di\-men\-sional simple real algebras,
Linear Algebra Appl. {\bf 493} (2016), 164--182.
\\
\url{https://arxiv.org/abs/1506.01552}
\\
\url{https://doi.org/10.1016/j.laa.2015.11.025}

\end{enumerate}

\section{Estado del arte}\label{secc:est_art}

Justo antes de que yo empezara mi tesis mi director,
Alberto Elduque,
y uno de sus principales colaboradores,
Mikhail Kochetov,
publicaron la monografía \cite{EK2013}.
Esto ha sido una gran suerte para mí.
Por un lado porque aceleró considerablemente
mi tarea de búsqueda bibliográfica,
que es el paso previo al trabajo de investigación propiamente dicho.
De hecho este libro,
y las referencias que contiene,
son el punto de partida recomendable para cualquiera
que vaya a empezar a trabajar con graduaciones.
En él se resumen y mejoran los trabajos de muchos autores,
como Patera, Zassenhaus, Bahturin, Zaicev, Draper,
o Martín González
(\cite{PZ1989}, \cite{HPP1998},
\cite{BSZ2001}, \cite{BZ2002}, \cite{BSZ2005}, \cite{BZ2006}, \cite{BZ2007},
\cite{DM2006}, \cite{DM2009},
\cite{BK2010}, \cite{Eld2010},
etcétera).

Y por otro lado porque uno de los temas de la monografía
son las graduaciones en álgebras complejas,
así que los problemas de clasificación que se abordan en
mis tres primeros artículos \cite{Rod2016}, \cite{BKR2018a} y \cite{BKR2018b}
ya estaban resueltos en \cite{EK2013}
para el caso en el que el cuerpo de escalares
son los números complejos en lugar de los números reales.
La primera aproximación para atacar esas clasificaciones de graduaciones
ha sido intentar adaptar las técnicas del caso complejo al caso real.

Uno de los resultados que ya estaba demostrado
y en el que se basa mi tesis
es el análogo graduado al teorema de Artin--Wedderburn.
Aparece en \cite[corolario 2.12]{EK2013}
en la versión más general posible,
aunque ya se había estudiado en
\cite{BSZ2001}, \cite{BZ2002}, e incluso \cite{NO1982}.
Ahora vamos a enunciarlo en el caso particular
de álgebras simples y de dimensión finita.

Supongamos que tenemos un álgebra graduada de división $\mathcal{D}$
por un grupo abeliano $G$,
y que $ g_1 , g_2 , \dots , g_k $ son $k$ elementos
(no necesariamente distintos) de $G$.
Con estos ingredientes podemos construir de manera natural
un álgebra $G$-graduada;
tomamos como álgebra subyacente el álgebra $M_k(\mathcal{D})$
de matrices $ k \times k $ con entradas en $\mathcal{D}$,
y la graduación la definimos diciendo que
la matriz cuyas entradas son todas nulas,
excepto la entrada $(i,j)$ que es $ d \in \mathcal{D}_t $,
tiene grado $ g_i t g_j^{-1} $.
El teorema nos dice que toda
álgebra asociativa de dimensión finita simple y $G$-graduada
es isomorfa a una de la forma $M_k(\mathcal{D})$.

Además,
este resultado es realmente una clasificación,
porque también nos dice
cuándo dos de estas álgebras $G$-graduadas son isomorfas.
En este texto no vamos a enunciar la condición de isomorfismo,
porque requiere de una notación más formal.
Por otro lado,
la clasificación salvo equivalencia es un problema difícil,
que solo se conoce para el caso de las graduaciones finas.

Lo que nos importa en este momento es que,
para completar la clasificación salvo isomorfismo de las graduaciones
en las álgebras asociativas de dimensión finita y simples,
hay que probar un análogo graduado del teorema de Frobenius.
Es decir, hay que clasificar las álgebras graduadas de división.
Esta cuestión se resuelve en \cite[teorema 2.15]{EK2013}
(ver también \cite{BSZ2001} y \cite{BK2010})
en el caso en el que el cuerpo de escalares son los números complejos;
pero el caso real era un problema abierto al comienzo de mi tesis.

\section{Motivación y objetivo}\label{secc:art_1_mot_obj}

En otoño de 2014 mi director me plantea
mi primer problema de investigación.
Se trata de hallar todas las graduaciones
(por grupos abelianos)
en el álgebra real $M_2(\mathbb{C})$.
Recordamos que una graduación es simplemente
una descomposición del álgebra
compatible con sus operaciones
de suma, producto y multiplicación por escalares.
Resulta que como álgebra compleja de dimensión $4$
sí que se conocían las graduaciones de $M_2(\mathbb{C})$,
pero no como álgebra real de dimensión $8$.
Como ya hemos visto,
gracias al análogo graduado del teorema de Artin--Wedderburn,
la cuestión se reduce a buscar aquellas graduaciones que son de división.
El primer descubrimiento fue la graduación del siguiente ejemplo.

\begin{ejemplo}\label{ejem:grad_M2C_dim1}
La figura \ref{fig_es:grad_M2C_dim1} nos muestra
una graduación de división en el álgebra real $M_2(\mathbb{C})$,
en la que el grupo graduador es
$ \mathbb{Z}_2 \times \mathbb{Z}_4 =
\langle a,b \mid a^2=e=b^4 , \allowbreak \, ab=ba \rangle $
y los grados se asignan en el siguiente orden:
$e$,~$a$; $b$,~$ab$; $b^2$,~$ab^2$; $b^3$,~$ab^3$.
\begin{figure}
\begin{align*}
M_2(\mathbb{C}) = {} &
	\mathbb{R} \begin{pmatrix} 1 & 0 \\ 0 & 1 \end{pmatrix}
	\oplus
	\mathbb{R} \begin{pmatrix} 0 & 1 \\ 1 & 0 \end{pmatrix}
	\oplus
\\ &
	\mathbb{R} \begin{pmatrix} 1+i & 0 \\ 0 & -1-i \end{pmatrix}
	\oplus
	\mathbb{R} \begin{pmatrix} 0 & -1-i \\ 1+i & 0 \end{pmatrix}
	\oplus
\\ &
	\mathbb{R} \begin{pmatrix} i & 0 \\ 0 & i \end{pmatrix}
	\oplus
	\mathbb{R} \begin{pmatrix} 0 & i \\ i & 0 \end{pmatrix}
	\oplus
\\ &
	\mathbb{R} \begin{pmatrix} -1+i & 0 \\ 0 & 1-i \end{pmatrix}
	\oplus
	\mathbb{R} \begin{pmatrix} 0 & 1-i \\ -1+i & 0 \end{pmatrix}
\end{align*}
\caption{Graduación de división del ejemplo \ref{ejem:grad_M2C_dim1}.}
\label{fig_es:grad_M2C_dim1}
\end{figure}
\end{ejemplo}

Subrayemos que la presencia de un factor $\mathbb{Z}_4$
en el grupo graduador del ejemplo \ref{ejem:grad_M2C_dim1}
es algo excepcional.
Utilizando argumentos similares
a los que ya se habían empleado para resolver el caso complejo,
y que más adelante explicaremos con algo de detalle,
encontramos todas las graduaciones de división posibles en $M_2(\mathbb{C})$
cuyas componentes homogéneas tienen dimensión $1$.

El problema estaba en las graduaciones
con componentes de dimensión mayor que $1$.
Después de algunos cálculos,
obtuvimos una lista que esperábamos que contuviera a todas.
Y en efecto,
probamos que no hay más,
mediante un argumento basado en el teorema de Cayley--Hamilton.
La mala noticia es que dicha demostración
no se podía generalizar al caso de matrices
de tamaño mayor que $ 2 \times 2 $.
Ahora bien,
conocer lo que ocurre en los casos de dimensión baja siempre es útil,
y en esta ocasión nos permitió observar que
todas estas graduaciones eran engrosamientos de
aquellas cuyas componentes homogéneas tienen dimensión $1$.
Dicha observación se convirtió en conjetura y,
con algo de ingenio,
en el siguiente resultado.

\begin{teorema}\label{teor:engros}
Si una graduación de división por un grupo abeliano
en un álgebra asociativa real de dimensión finita es fina,
entonces o bien todas las componentes homogéneas tienen dimensión $1$,
o bien se trata de una graduación como álgebra compleja.
\end{teorema}

Este teorema fue fundamental,
ya que gracias a él
pensamos que podríamos resolver
no solo el caso de matrices $ 2 \times 2 $,
sino el de matrices de cualquier tamaño.
Es en este momento que
nos planteamos como factible el siguiente objetivo,
que acabaría siendo el propósito de
mi primer artículo \cite{Rod2016}.

\begin{objetivo}\label{obj_es:art_1}
Clasificamos,
salvo isomorfismo y salvo equivalencia,
las graduaciones de división
(por grupos abelianos)
en las álgebras asociativas reales de dimensión finita y simples.
\end{objetivo}

Como ya hemos mencionado anteriormente,
esto es un análogo graduado del teorema de Frobenius,
y al sumarlo a la versión graduada del teorema de Artin--Wedderburn,
obtenemos la clasificación salvo isomorfismo
de las graduaciones (no necesariamente de división)
en las álgebras asociativas reales de dimensión finita y simples,
$M_n(\mathbb{R})$, $M_n(\mathbb{C})$ y $M_n(\mathbb{H})$.
Por otro lado,
otro motivo que teníamos en mente
para clasificar graduaciones en álgebras asociativas
era usar estos resultados para hacer lo mismo en álgebras de Lie.
Pero ya volveremos a esta cuestión
cuando hablemos del tercer artículo \cite{BKR2018b}
en el capítulo \ref{cap:art_3}.

Nuestra estrategia para llevar a cabo la clasificación
va a ser separar el problema en tres casos,
de acuerdo con la siguiente observación,
que es consecuencia del Teorema de Frobenius.

\begin{observacion}\label{obs:dim_comp}
Las componentes homogéneas
de una graduación de división
en un álgebra asociativa real de dimensión finita
tienen todas la misma dimensión,
que es igual a $1$, $2$ o $4$,
dependiendo de si la componente neutra es isomorfa a
$\mathbb{R}$, $\mathbb{C}$ o $\mathbb{H}$,
respectivamente.
\end{observacion}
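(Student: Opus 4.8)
The plan is to deduce the statement from Frobenius's theorem, which we may assume. Everything hinges on two elementary facts about a graded-division algebra $\mathcal{D} = \bigoplus_{g \in G} \mathcal{D}_g$: that its neutral component $\mathcal{D}_e$ is itself a real associative division algebra, and that every nonzero homogeneous component is a linear copy of $\mathcal{D}_e$.

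First I would note that the unit $1$ is homogeneous of degree $e$: writing $1 = \sum_g u_g$ and comparing homogeneous components in $x = 1 \cdot x$ for homogeneous $x$, one finds $u_g x = 0$ whenever $g \neq e$, so $u_e$ is again a unit and hence $1 = u_e \in \mathcal{D}_e$. Thus $\mathcal{D}_e$ is a unital subalgebra. The key lemma is that the inverse of a nonzero homogeneous element $x \in \mathcal{D}_g$ is homogeneous of degree $g^{-1}$: expanding $x^{-1} = \sum_h y_h$, the identity $x x^{-1} = 1 \in \mathcal{D}_e$ together with $x y_h \in \mathcal{D}_{gh}$ forces $x y_h = 0$ for $h \neq g^{-1}$, and left-invertibility of $x$ then gives $y_h = 0$. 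In particular the inverse of a nonzero element of $\mathcal{D}_e$ again lies in $\mathcal{D}_e$, so $\mathcal{D}_e$ is a finite-dimensional real associative division algebra; by Frobenius's theorem it is isomorphic to $\mathbb{R}$, $\mathbb{C}$ or $\mathbb{H}$, of dimension $1$, $2$ or $4$ respectively.

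It then remains to show that every component of the support has the same dimension as $\mathcal{D}_e$. I would first check that the support $T = \supp(\mathcal{D})$ is a subgroup of $G$: it contains $e$; it is closed under products, since the product of two invertible homogeneous elements is invertible, hence nonzero, by the product rule \eqref{ec:grad_cond_prod}; and it is closed under inverses by the lemma above. For each $t \in T$ I would then fix a nonzero, hence invertible, element $X_t \in \mathcal{D}_t$ and consider the right-multiplication map $\mathcal{D}_e \to \mathcal{D}_t$, $a \mapsto a X_t$. It is $\mathbb{R}$-linear with image in $\mathcal{D}_{et} = \mathcal{D}_t$; it is injective because $X_t$ is invertible; and it is surjective because any $y \in \mathcal{D}_t$ equals $(y X_t^{-1}) X_t$ with $y X_t^{-1} \in \mathcal{D}_{t t^{-1}} = \mathcal{D}_e$. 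Hence $\dim_{\mathbb{R}} \mathcal{D}_t = \dim_{\mathbb{R}} \mathcal{D}_e$ for every $t \in T$, which is the assertion. There is no genuine obstacle here: the only substantial input is Frobenius's theorem, and the sole point needing care is the homogeneity of inverses, from which both the division structure of $\mathcal{D}_e$ and the subgroup property of the support follow; the rest is bookkeeping with the grading conditions.
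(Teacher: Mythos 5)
Your proposal is correct and takes essentially the same approach as the paper: the paper states this remark without proof, simply as a consequence of Frobenius's theorem, and your argument supplies exactly the standard details behind that assertion (homogeneity of the unit and of inverses, so that $\mathcal{D}_e$ is a finite-dimensional real associative division algebra, hence $\mathbb{R}$, $\mathbb{C}$ or $\mathbb{H}$ by Frobenius, and right multiplication by an invertible homogeneous element giving a linear bijection $\mathcal{D}_e \to \mathcal{D}_t$ for every $t$ in the support). There is no gap; your two lemmas are precisely the bookkeeping the paper leaves implicit.
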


\section{Dimensión uno}

Abordemos la clasificación
de las graduaciones de división
en las álgebras asociativas reales de dimensión finita y simples.
Primero consideramos el caso en el que
las componentes homogéneas tienen dimensión $1$,
de acuerdo con la observación \ref{obs:dim_comp}.

Empezamos analizando el álgebra graduada de división $\mathcal{D}$
del ejemplo \ref{ejem:grad_M2R_dim1}.
El álgebra subyacente es $M_2(\mathbb{R})$
y el soporte es $ T = \mathbb{Z}_2^2 = \{ e,a,b,ab \} $.
Llamemos $X_a$ a la segunda matriz que aparece
en la ecuación \eqref{ec:grad_M2R_dim1},
y $X_b$ a la tercera.
Así,
dicha ecuación se escribe de la siguiente forma:
\begin{equation}\label{ec:ejem_grad_clas_dim1}
M_2(\mathbb{R}) = \mathbb{R} I \oplus \mathbb{R} X_a
\oplus \mathbb{R} X_b \oplus \mathbb{R} X_a X_b
\end{equation}
Notamos además que $X_a$ y $X_b$ satisfacen las siguientes relaciones:
\begin{equation}\label{ec:ejem_rel_clas_dim1}
X_a^2 = + I \qquad X_b^2 = + I \qquad X_a X_b = - X_b X_a
\end{equation}
La idea está en que las ecuaciones
\eqref{ec:ejem_grad_clas_dim1} y \eqref{ec:ejem_rel_clas_dim1}
determinan completamente el álgebra graduada $\mathcal{D}$.
Por ejemplo,
$ ( X_a + 3 X_a X_b ) ( X_a + 2 X_b ) =
X_a^2 + 2 X_a X_b + 3 X_a X_b X_a + 6 X_a X_b^2 =
I + 6 X_a - 3 X_b + 2 X_a X_b $.

Las tres relaciones
de la ecuación \eqref{ec:ejem_rel_clas_dim1}
son suficientes,
pero surge un problema.
Supongamos que hubiéramos escogido la cuarta matriz,
$X_{ab}$,
en lugar de la tercera,
entonces la segunda relación sería distinta:
$ X_a^2 = + I $,
$ X_{ab}^2 = - I $,
$ X_a X_{ab} = - X_{ab} X_a $.
Sin embargo está claro que estas nuevas relaciones
definen la misma álgebra graduada.
La solución para evitar redundancias
va a ser quedarnos con todas las relaciones.

Definimos la aplicación
$ \beta : T \times T \to \mathbb{R} \setminus \{ 0 \} $
mediante las relaciones de conmutación de los elementos homogéneos
($ X_e = I $):
\begin{equation}\label{ec:rel_conm}
X_u X_v = \beta(u,v) X_v X_u
\end{equation}
También definimos la aplicación $ \mu : T \to \{ \pm 1 \} $
mediante los signos de los cuadrados de los elementos homogéneos:
\begin{equation}\label{ec:sign_cuadr}
X_t^2 = \mu(t) I
\end{equation}
Con un pequeño cálculo comprobamos que
$\beta$ es un bicarácter alternado de tipo I,
y que $\mu$ es una forma cuadrática
cuya polarización $\beta_{\mu}$ es precisamente $\beta$.
Además,
estas aplicaciones $\beta$ y $\mu$ son los invariantes que caracterizan
el álgebra graduada $\mathcal{D}$ salvo isomorfismo.
Esto quiere decir que,
primero,
$\beta$ y $\mu$ no dependen de la elección de los $X_t$,
y segundo,
que si repetimos esta construcción
partiendo de otra álgebra graduada $\mathcal{D}'$,
obteniendo $\beta'$ y $\mu'$,
entonces $ \beta = \beta' $ y $ \mu = \mu' $ si y solo si
$\mathcal{D}$ es isomorfa a $\mathcal{D}'$.
De hecho,
como $ \beta_{\mu} = \beta $,
toda la información está contenida en la forma cuadrática $\mu$.

Estos argumentos siguen siendo válidos en el caso general
en el que el álgebra subyacente de $\mathcal{D}$
es $M_n(\mathbb{R})$ o $M_{n/2}(\mathbb{H})$.
Como $\beta$ es de tipo I,
la teoría de bicaracteres alternados nos dice que
el grupo $T$ es isomorfo a $\mathbb{Z}_2^{2m}$.
Por lo tanto hay $2^{2m}$ componentes homogéneas de dimensión $1$,
y ya sabemos que la dimensión de $\mathcal{D}$ es $n^2$,
luego necesariamente $ n = 2^m $.
Es decir,
estas graduaciones de división existen solo si
el tamaño de las matrices es una potencia de $2$.

Consideremos ahora la clasificación salvo equivalencia.
Dos de estas álgebras graduadas son equivalentes si y solo si
sus correspondientes formas cuadráticas son equivalentes,
es decir,
si después de renombrar los elementos de $T$,
ambas formas cuadráticas son iguales.
La clasificación de las formas cuadráticas con polarización de tipo I
es bien conocida (la idea es coger una base simpléctica);
se dividen en dos clases de equivalencia,
una correspondiente a las formas cuadráticas cuyo invariante de Arf es $+1$,
y otra correspondiente a las de invariante de Arf $-1$.

Por otro lado tenemos que probar,
en el caso general $ n = 2^m $,
la existencia de estas graduaciones de división.
Consideramos el siguiente producto tensorial graduado,
donde la graduación de cada factor
es la del ejemplo \ref{ejem:grad_M2R_dim1}:
\begin{equation}
M_n(\mathbb{R}) \cong M_2(\mathbb{R}) \otimes
\dots \otimes M_2(\mathbb{R})
\end{equation}
Es,
en efecto,
un álgebra graduada de división
con componentes homogéneas de dimensión $1$,
y su correspondiente forma cuadrática tiene invariante de Arf $+1$.
Por lo tanto todas las demás formas cuadráticas
en $ (\mathbb{Z}_2^2)^m \cong \mathbb{Z}_2^{2m} $
con polarización de tipo I e invariante de Arf $+1$
también se obtienen de graduaciones de división en $M_n(\mathbb{R})$,
ya que son equivalentes a la que acabamos de construir.
Análogamente,
gracias a este otro producto tensorial
cuyo factor $\mathbb{H}$ está graduado
como en el ejemplo \ref{ejem:grad_H_dim1},
vemos que aquellas con invariante de Arf $-1$
provienen de graduaciones de división en $M_{n/2}(\mathbb{H})$:
\begin{equation}
M_{n/2}(\mathbb{H}) \cong M_2(\mathbb{R}) \otimes
\dots \otimes M_2(\mathbb{R}) \otimes \mathbb{H}
\end{equation}

Ya podemos enunciar la clasificación salvo equivalencia,
y la clasificación salvo isomorfismo,
por ejemplo en el caso de matrices de cuaternios.
Si el álgebra subyacente es $M_n(\mathbb{C})$
el razonamiento es similar,
pero se obtienen bicaracteres alternados de tipo II;
esto hace que aparezca una segunda clase de equivalencia,
en la que los argumentos se complican un poco
ya que el grupo graduador tiene elementos de orden $4$.

\begin{teorema}\label{teor:art_uno_dim1_equiv}
Toda graduación de división,
por un grupo abeliano,
en un álgebra asociativa real de dimensión finita y simple,
cuyas componentes homogéneas tienen dimensión $1$,
es equivalente a uno, y solo uno,
de los productos tensoriales graduados de la siguiente lista,
donde las graduaciones de cada factor son las de los ejemplos
\ref{ejem:grad_M2R_dim1}, \ref{ejem:grad_H_dim1},
\ref{ejem:grad_C_dim1} y \ref{ejem:grad_M2C_dim1}.
\begin{enumerate}
\item[(1-a)]
	$ M_n(\mathbb{R}) \cong M_2(\mathbb{R}) \otimes
	\dots \otimes M_2(\mathbb{R}) $,
	$ n = 2^m \geq 1 $.\\
	El grupo graduador es
	$ (\mathbb{Z}_2^2)^m \cong \mathbb{Z}_2^{2m} $.
\item[(1-b)]
	$ M_{n/2}(\mathbb{H}) \cong M_2(\mathbb{R}) \otimes
	\dots \otimes M_2(\mathbb{R}) \otimes \mathbb{H} $,
	$ n = 2^m \geq 2 $.\\
	El grupo graduador es
	$ (\mathbb{Z}_2^2)^m \cong \mathbb{Z}_2^{2m} $.
\item[(1-c)]
	$ M_n(\mathbb{C}) \cong M_2(\mathbb{R}) \otimes \dots
	\otimes M_2(\mathbb{R}) \otimes \mathbb{C} $,
	$ n = 2^m \geq 1 $.\\
	El grupo graduador es
	$ (\mathbb{Z}_2^2)^m \times \mathbb{Z}_2 \cong \mathbb{Z}_2^{2m+1} $.
\item[(1-d)]
	$ M_n(\mathbb{C}) \cong M_2(\mathbb{R}) \otimes \dots
	\otimes M_2(\mathbb{R}) \otimes M_2(\mathbb{C}) $,
	$ n = 2^m \geq 2 $.\\
	El grupo graduador es
	$ (\mathbb{Z}_2^2)^{m-1} \times ( \mathbb{Z}_2 \times \mathbb{Z}_4 )
	\cong \mathbb{Z}_2^{2m-1} \times \mathbb{Z}_4 $.
\end{enumerate}
\end{teorema}

\begin{teorema}\label{teor:art_uno_dim1_isom}
Las graduaciones de división,
por grupos abelianos,
en el álgebra $M_{n/2}(\mathbb{H})$,
cuyas componentes homogéneas tienen dimensión $1$,
existen si y solo si $ n = 2^m \geq 2 $,
y están determinadas salvo isomorfismo
por el signo de los cuadrados de los elementos homogéneos.
De esta manera obtenemos una correspondencia biyectiva
entre las clases de isomorfismo
y las formas cuadráticas en $\mathbb{Z}_2^{2m}$
con polarización de tipo I e invariante de Arf $-1$.
Todas estas graduaciones pertenecen a la misma clase de equivalencia,
representada por (1-b) en la lista del teorema \ref{teor:art_uno_dim1_equiv}.
\end{teorema}

\section{Dimensión cuatro}

El siguiente paso en la clasificación
es considerar las graduaciones de división
cuyas componentes homogéneas tienen dimensión $4$.
Curiosamente este caso,
en el que la componente neutra es isomorfa a $\mathbb{H}$,
es más sencillo que el de dimensión $2$,
en el que la componente neutra es isomorfa a $\mathbb{C}$.
También en el teorema \ref{teor:art_uno_dim1_equiv},
el álgebra de matrices complejas
nos daba una clase de equivalencia más que las otras.
Esta dificultad extra es debida al centro.

Dada un álgebra $\mathcal{D}$,
su centro $Z(\mathcal{D})$
es el conjunto de elementos de dicha álgebra
que conmutan con todos los demás:
\begin{equation}
Z(\mathcal{D}) = \{ X \in \mathcal{D} \mid
XY = YX \text{ para todo } Y \in \mathcal{D} \}
\end{equation}
En las álgebras reales $M_n(\mathbb{R})$ y $M_n(\mathbb{H})$,
las únicas matrices que conmutan con todas las demás
son los múltiplos de la matriz identidad;
por tanto $ Z ( M_n(\mathbb{R}) ) $ y $ Z ( M_n(\mathbb{H}) ) $
son espacios vectoriales reales de dimensión $1$.
Por otro lado,
si consideramos $M_n(\mathbb{C})$ como álgebra
sobre el cuerpo de los números complejos,
entonces su centro también está formado
por los múltiplos de la matriz identidad
y es un espacio vectorial complejo de dimensión $1$.
Sin embargo,
como espacio vectorial real,
$ Z ( M_n(\mathbb{C}) ) $ tiene dimensión $2$.
Se dice que las álgebras simples reales
$M_n(\mathbb{R})$ y $M_n(\mathbb{H})$
son simples centrales;
mientras que el álgebra $M_n(\mathbb{C})$
es simple central como álgebra compleja,
pero no como álgebra real.

Volvamos al problema de la clasificación.
Estamos tratando el caso en el que
nuestra álgebra graduada de división $\mathcal{D}$
tiene una componente neutra $\mathcal{D}_e$ isomorfa a $\mathbb{H}$.
Consideramos el centralizador
$ C_{\mathcal{D}} (\mathcal{D}_e) $
de $\mathcal{D}_e$ en $\mathcal{D}$,
que se define,
de manera similar al centro,
como el conjunto de los elementos de $\mathcal{D}$
que conmutan con todos los elementos de $\mathcal{D}_e$:
\begin{equation}
C_{\mathcal{D}} (\mathcal{D}_e) = \{ X \in \mathcal{D} \mid
XY = YX \text{ para todo } Y \in \mathcal{D}_e \}
\end{equation}
El centralizador $ C_{\mathcal{D}} (\mathcal{D}_e) $
es una subálgebra de $\mathcal{D}$,
ya que si realizamos operaciones
con elementos de $ C_{\mathcal{D}} (\mathcal{D}_e) $
el resultado sigue siendo un elemento de $ C_{\mathcal{D}} (\mathcal{D}_e) $;
más aún,
la subálgebra $ C_{\mathcal{D}} (\mathcal{D}_e) $ es graduada,
porque si descomponemos
cualquier elemento de $ C_{\mathcal{D}} (\mathcal{D}_e) $
como suma de elementos homogéneos,
dichos elementos también pertenecen a $ C_{\mathcal{D}} (\mathcal{D}_e) $.
Está claro que si dos álgebras graduadas son isomorfas o equivalentes,
entonces los centralizadores de sus componentes neutras
también son isomorfos o equivalentes.

Gracias a que $\mathcal{D}_e$ es un álgebra simple central,
podemos aplicarle un resultado profundo
de la teoría de álgebras asociativas,
el teorema del doble centralizador
(ver por ejemplo \cite[teorema 4.7]{Jac1989}).
Adaptando dicho teorema a la estructura de graduación,
tenemos que el álgebra graduada $\mathcal{D}$
es isomorfa de manera natural al producto tensorial graduado
de $\mathcal{D}_e$ y su centralizador $ C_{\mathcal{D}} (\mathcal{D}_e) $:
\begin{equation}\label{ec:teor_doble_centr}
\mathcal{D} \cong \mathcal{D}_e \otimes C_{\mathcal{D}} (\mathcal{D}_e)
\end{equation}
Por lo tanto obtenemos el recíproco:
si los centralizadores de las componentes neutras
de dos de estas álgebras graduadas
son isomorfos o equivalentes,
entonces las álgebras también son isomorfas o equivalentes.
Además,
la ecuación \eqref{ec:teor_doble_centr} también nos indica que
las componentes homogéneas de $ C_{\mathcal{D}} (\mathcal{D}_e) $
tienen dimensión $1$.
Enunciemos estos argumentos en forma de teorema,
con el mínimo de hipótesis.

\begin{teorema}\label{teor:art_uno_dim4}
Sea $\mathcal{D}$ un álgebra asociativa real y unitaria
dotada de una graduación de división
cuyas componentes homogéneas tienen dimensión $4$.
Entonces el centralizador de la componente neutra
$ C_{\mathcal{D}} (\mathcal{D}_e) $
es también un álgebra graduada de división,
pero con componentes homogéneas de dimensión $1$.
Además,
el álgebra graduada $\mathcal{D}$
es isomorfa al producto tensorial graduado
$ \mathcal{D}_e \otimes C_{\mathcal{D}} (\mathcal{D}_e) $.
Por lo tanto,
las clases de isomorfismo y equivalencia de $\mathcal{D}$
están determinadas por las de
$ C_{\mathcal{D}} (\mathcal{D}_e) $.
\end{teorema}

Por ejemplo,
como $ M_n(\mathbb{R}) \cong \mathbb{H} \otimes M_{n/4}(\mathbb{H}) $,
juntando los teoremas
\ref{teor:art_uno_dim1_isom} y \ref{teor:art_uno_dim4}
deducimos que
existen graduaciones de división
por grupos abelianos en el álgebra $M_n(\mathbb{R})$
con componentes homogéneas de dimensión $4$
si y solo si $ n = 2^m \geq 4 $,
y que las clases de isomorfismo están en correspondencia biyectiva
con las formas cuadráticas en $\mathbb{Z}_2^{2m-2}$
con polarización de tipo I e invariante de Arf $-1$.
Análogamente,
cada una de las clases de equivalencia
del teorema \ref{teor:art_uno_dim1_equiv}
nos da exactamente una clase de equivalencia
para el caso de componentes homogéneas de dimensión $4$.

\section{Dimensión dos}

Finalmente analicemos el caso en el que
las componentes homogéneas tienen dimensión $2$.
Empezamos estudiando la graduación de división
en el álgebra real $ \mathcal{D} = M_2(\mathbb{C}) $
dada por la siguiente ecuación:
\begin{equation}\label{ec:grad_M2C_Z22}
\begin{split}
M_2(\mathbb{C}) = {} &
\left[
	\mathbb{R} \begin{pmatrix} 1 & 0 \\ 0 & 1 \end{pmatrix}
	\oplus
	\mathbb{R} \begin{pmatrix} 0 & -1 \\ 1 & 0 \end{pmatrix}
\right]
\oplus
\left[
	\mathbb{R} \begin{pmatrix} 0 & 1 \\ 1 & 0 \end{pmatrix}
	\oplus
	\mathbb{R} \begin{pmatrix} -1 & 0 \\ 0 & 1 \end{pmatrix}
\right]
\oplus
\\ &
\left[
	\mathbb{R} \begin{pmatrix} i & 0 \\ 0 & i \end{pmatrix}
	\oplus
	\mathbb{R} \begin{pmatrix} 0 & -i \\ i & 0 \end{pmatrix}
\right]
\oplus
\left[
	\mathbb{R} \begin{pmatrix} 0 & i \\ i & 0 \end{pmatrix}
	\oplus
	\mathbb{R} \begin{pmatrix} -i & 0 \\ 0 & i \end{pmatrix}
\right]
\end{split}
\end{equation}
El grupo graduador es
$ T = \mathbb{Z}_2^2 =
\langle g,f \mid g^2=e=f^2 , \allowbreak \, gf=fg \rangle $
y los grados se asignan en el siguiente orden:
$e$,~$g$; $f$,~$gf$.
A diferencia de lo que pasaba
cuando las componentes homogéneas tenían dimensión $1$,
ahora los signos de los cuadrados de los elementos homogéneos
no siempre están definidos.
En efecto,
en la componente de grado $e$ hay matrices cuyo cuadrado es positivo,
pero también hay otras cuyo cuadrado es negativo;
y lo mismo sucede en la componente de grado $f$.
Llamemos $K$ al conjunto de estos grados,
$ K = \{ e,f \} $.
Observamos que este conjunto es precisamente
el soporte del centralizador de la componente neutra:
\begin{equation}
K = \supp ( C_{\mathcal{D}} (\mathcal{D}_e) )
\end{equation}

Por otro lado,
dada cualquier matriz homogénea no nula de grado $g$,
su cuadrado es siempre un múltiplo positivo de la matriz identidad.
Ocurre lo mismo si en lugar de matrices de grado $g$
cogemos matrices de grado $gf$,
pero ahora obtenemos múltiplos negativos de $I$.
Por lo tanto podemos definir,
tomando matrices $X_t$ normalizadas,
la aplicación $ \nu : T \setminus K \to \{ \pm 1 \} $
mediante los signos de los cuadrados de los elementos homogéneos:
\begin{equation}
X_t^2 = \nu(t) I
\end{equation}

La clave está en que,
gracias al teorema \ref{teor:engros},
la aplicación $\nu$ sigue estando bien definida
en el caso general en el que $\mathcal{D}$ es cualquier
álgebra asociativa real de dimensión finita y simple
(excepto por algunas complicaciones cuando $T$ tiene elementos de orden $4$).
Además,
aplicando el teorema del doble centralizador se demuestra que $\nu$
determina el álgebra graduada $\mathcal{D}$ salvo isomorfismo.
Enunciemos esto en forma de teorema,
en los casos en los que $\mathcal{D}$ es simple central,
es decir,
$M_n(\mathbb{R})$ o $M_n(\mathbb{H})$,
ya que entonces el grupo graduador no tiene elementos de orden $4$.

\begin{teorema}
Las graduaciones de división,
por grupos abelianos,
en las álgebras asociativas reales de dimensión finita y simples centrales,
con componentes homogéneas de dimensión $2$,
están determinadas salvo isomorfismo
por los signos de los cuadrados de
los elementos homogéneos que no conmutan con la componente neutra.
\end{teorema}

No todas las aplicaciones $ \nu : T \setminus K \to \{ \pm 1 \} $
provienen de graduaciones de división,
sino solo las que cumplen determinadas propiedades,
que caracterizamos en el primer artículo \cite{Rod2016}.

En los casos simples centrales
$ \mathcal{D} = M_n(\mathbb{R}) $ y $ \mathcal{D} = M_n(\mathbb{H}) $,
todas estas graduaciones pertenecen a la misma clase de equivalencia,
que llamamos (2-a) y (2-b) respectivamente.
Sin embargo,
si $ \mathcal{D} = M_n(\mathbb{C}) $ la situación se complica.
Veamos cuál es la lista de las clases de equivalencia en este caso.

\begin{ejemplo}\label{ejem:grad_M2C_Z4}
La figura \ref{fig_es:grad_M2C_Z4}
define una graduación de división
en el álgebra real $M_2(\mathbb{C})$
por el grupo $\mathbb{Z}_4$.
Notemos que sus componentes homogéneas tienen dimensión $2$,
y que simplemente es un engrosamiento de la graduación
del ejemplo \ref{ejem:grad_M2C_dim1}.
\begin{figure}
\begin{align*}
M_2(\mathbb{C}) = {} &
\left[
	\mathbb{R} \begin{pmatrix} 1 & 0 \\ 0 & 1 \end{pmatrix}
	\oplus
	\mathbb{R} \begin{pmatrix} 0 & i \\ i & 0 \end{pmatrix}
\right]
\oplus
\\ &
\left[
	\mathbb{R} \begin{pmatrix} 1+i & 0 \\ 0 & -1-i \end{pmatrix}
	\oplus
	\mathbb{R} \begin{pmatrix} 0 & 1-i \\ -1+i & 0 \end{pmatrix}
\right]
\oplus
\\ &
\left[
	\mathbb{R} \begin{pmatrix} i & 0 \\ 0 & i \end{pmatrix}
	\oplus
	\mathbb{R} \begin{pmatrix} 0 & 1 \\ 1 & 0 \end{pmatrix}
\right]
\oplus
\\ &
\left[
	\mathbb{R} \begin{pmatrix} -1+i & 0 \\ 0 & 1-i \end{pmatrix}
	\oplus
	\mathbb{R} \begin{pmatrix} 0 & -1-i \\ 1+i & 0 \end{pmatrix}
\right]
\end{align*}
\caption{Graduación de división del ejemplo \ref{ejem:grad_M2C_Z4}.}
\label{fig_es:grad_M2C_Z4}
\end{figure}
\end{ejemplo}

\begin{teorema}\label{teor:art_uno_dim2_equiv}
Toda graduación de división,
por un grupo abeliano,
en el álgebra real $M_n(\mathbb{C})$,
con componentes homogéneas de dimensión $2$,
es equivalente a una, y solo una,
de las álgebras graduadas de la siguiente lista,
donde las graduaciones del primer factor de cada producto tensorial
son las de los ejemplos
\ref{ejem:grad_M2R_dim2} y \ref{ejem:grad_M2C_Z4},
mientras que el resto de factores están graduados como en los ejemplos
\ref{ejem:grad_M2R_dim1}, \ref{ejem:grad_C_dim1} y \ref{ejem:grad_M2C_dim1}.
\begin{enumerate}
\item[(2-c)]
	$ M_n(\mathbb{C}) \cong M_2(\mathbb{R}) \otimes M_2(\mathbb{R})
	\otimes \dots \otimes M_2(\mathbb{R}) \otimes \mathbb{C} $,
	$ n = 2^m \geq 2 $.\\
	El grupo graduador es
	$ \mathbb{Z}_2 \times (\mathbb{Z}_2^2)^{m-1} \times \mathbb{Z}_2
	\cong \mathbb{Z}_2^{2m} $.
\item[(2-d)]
	$ M_n(\mathbb{C}) \cong M_2(\mathbb{R}) \otimes M_2(\mathbb{R})
	\otimes \dots \otimes M_2(\mathbb{R}) \otimes M_2(\mathbb{C}) $,
	$ n = 2^m \geq 4 $.\\
	El grupo graduador es
	$ \mathbb{Z}_2 \times (\mathbb{Z}_2^2)^{m-2} \times
	( \mathbb{Z}_2 \times \mathbb{Z}_4 ) \cong
	\mathbb{Z}_2^{2m-2} \times \mathbb{Z}_4 $.
\item[(2-e)]
	$ M_n(\mathbb{C}) \cong M_2(\mathbb{C}) \otimes
	M_2(\mathbb{R}) \otimes \dots \otimes M_2(\mathbb{R}) $,
	$ n = 2^m \geq 2 $.\\
	El grupo graduador es
	$ \mathbb{Z}_4 \times (\mathbb{Z}_2^2)^{m-1}
	\cong \mathbb{Z}_2^{2m-2} \times \mathbb{Z}_4 $.
\item[(2-f)]
	Graduaciones en $M_n(\mathbb{C})$ como álgebra sobre
	el cuerpo de escalares de los números complejos $\mathbb{C}$.
\end{enumerate}
\end{teorema}

Si comparamos las listas de los teoremas
\ref{teor:art_uno_dim1_equiv} y \ref{teor:art_uno_dim2_equiv},
vemos que la clase de equivalencia (1-c) da lugar a la (2-c).
Es decir,
la situación es similar a lo que pasaba en los casos simples centrales.
Sin embargo,
la (1-d) da lugar a dos clases de equivalencia distintas,
(2-d) y (2-e).
Esta última ocurre cuando $ T \setminus K $
es precisamente el conjunto de elementos de orden $4$ de $T$,
lo que dificulta la definición del invariante $\nu$.
De hecho advertimos a quien se lea el primer artículo \cite{Rod2016}
de que este caso (2-e) es más complicado que los demás.

\section{Publicación}

En mayo de 2015 habíamos escrito el artículo,
así que lo dejamos de lado un tiempo
para revisarlo después una última vez antes de subirlo al Arxiv.
Entonces nos informaron de que
Yuri Bahturin y Mikhail Zaicev
estaban trabajando exactamente en el mismo problema que nosotros,
aunque en su caso estaban interesados solo
en la clasificación salvo equivalencia,
pero no salvo isomorfismo.
Mentiría si dijera que no me preocupé en aquel momento.
Sin embargo,
creo que esta coincidencia resultó ser muy beneficiosa.
El hecho de que dos grupos se interesen en un mismo problema
significa que dicha cuestión es importante.
Al final,
ambos publicamos nuestros artículos
\cite{BZ2016} y \cite{Rod2016} simultáneamente,
llegando a los mismos resultados
(aunque en \cite{BZ2016} la clase de equivalencia (2-d) fue pasada por alto)
usando técnicas distintas.
Además,
como consecuencia de esta coincidencia,
me fui tres meses de estancia
a la Memorial University of Newfoundland (Canadá)
en la primavera de 2016,
y continuamos la investigación de manera conjunta.

\selectlanguage{spanish}
\chapter{Segundo artículo}\label{cap:art_2}

\begin{enumerate}

\item[\cite{BKR2018a}]
Y. Bahturin, M. Kochetov\ and\ A. Rodrigo-Escudero,
Classification of involutions on graded-division simple real algebras,
Linear Algebra Appl. {\bf 546} (2018), 1--36.
\\
\url{https://arxiv.org/abs/1707.05526}
\\
\url{https://doi.org/10.1016/j.laa.2018.01.040}

\end{enumerate}

\section{Objetivo}\label{secc:art_2_obj}

Recordemos que el propósito final de mi tesis es
clasificar graduaciones en álgebras de Lie reales simples.
Para ello adaptamos los argumentos de la monografía \cite{EK2013}
del caso en el que el cuerpo de escalares son los números complejos
al caso en el que son los números reales.
En el capítulo \ref{cap:art_3} explicaremos este proceso con más detalle,
pero la idea es que podemos transferir
la clasificación de las graduaciones
en un álgebra asociativa a un álgebra de Lie,
a condición de que la primera esté dotada de una estructura adicional:
una involución.

Así,
uno de los pasos es estudiar las involuciones
que son compatibles con las graduaciones de división
que hemos visto en el capítulo \ref{cap:art_1}.
El análogo de esta cuestión sobre el cuerpo de los complejos
se resuelve en \cite[proposiciones 2.51 y 2.53]{EK2013}
(ver también \cite{BZ2006});
las involuciones se dividen en dos clases de equivalencia.
Sin embargo
si el cuerpo de escalares es $\mathbb{R}$
aparecen muchas más clases.
Por ello consideramos que esta clasificación
podía ser interesante por sí misma,
y nos planteamos el objetivo del segundo artículo \cite{BKR2018a}.

\begin{objetivo}\label{obj_es:art_2}
Clasificamos,
salvo isomorfismo y salvo equivalencia,
las involuciones
en las álgebras asociativas reales de dimensión finita simples
y graduadas de división,
cuando el grupo graduador es abeliano.
\end{objetivo}

\section{Involuciones}

A lo largo de este capítulo fijamos
un grupo abeliano $G$,
un álgebra asociativa real de dimensión finita y simple $\mathcal{D}$,
y una $G$-graduación de división $\Gamma$ en $\mathcal{D}$.
En otras palabras,
$\mathcal{D}$ es una de las álgebras graduadas
que clasificamos en el capítulo \ref{cap:art_1}.

Por un momento olvidémonos de la graduación.
Un antiautomorfismo del álgebra $\mathcal{D}$
es una aplicación $ \varphi : \mathcal{D} \to \mathcal{D} $
que es isomorfismo de espacios vectoriales
y que invierte el orden del producto.
Es decir,
$\varphi$ es biyectiva
y para todos $ X,Y \in \mathcal{D} $
y todo $ \lambda \in \mathbb{R} $
se satisfacen las siguientes condiciones:
$ \varphi(X+Y) = \varphi(X) + \varphi(Y) $,
$ \varphi( \lambda X ) = \lambda \varphi(X) $,
y $ \varphi(XY) = \varphi(Y) \varphi(X) $.
Si decimos que $\varphi$ es
un antiautomorfismo del álgebra graduada $\mathcal{D}$,
significa que estamos teniendo en cuenta su graduación,
y que por tanto se debe satisfacer una condición más,
que $ \varphi(X_g) \in \mathcal{D}_g $
para todo $ g \in G $ y todo $ X_g \in \mathcal{D}_g $.
Finalmente,
una involución $\varphi$ es un antiautomorfismo
tal que $ \varphi(\varphi(X)) = X $ para todo $ X \in \mathcal{D} $.

Por ejemplo,
la aplicación $ \varphi : M_2(\mathbb{C}) \to M_2(\mathbb{C}) $
dada por transponer y conjugar,
$ \varphi(X) = \overline{X^T} $ para toda matriz $ X \in M_2(\mathbb{C}) $,
es una involución en el álgebra real $M_2(\mathbb{C})$.
Observamos que esta involución respeta
la graduación de la ecuación \eqref{ec:grad_M2C_Z22}.
Sin embargo,
no respeta la graduación del ejemplo \ref{ejem:grad_M2C_dim1}.
De hecho se puede demostrar que,
si el soporte $T$ es isomorfo a $ \mathbb{Z}_2^a \times \mathbb{Z}_4 $
(lo que implica que el álgebra subyacente $\mathcal{D}$
es isomorfa a $M_n(\mathbb{C})$),
entonces todo antiautomorfismo en el álgebra graduada de división $\mathcal{D}$
actúa como la identidad en el centro de $\mathcal{D}$.
Este fue uno de los primeros resultados de la clasificación
que nos llamó la atención.
Otro ejemplo de involución,
esta vez en el álgebra graduada del ejemplo \ref{ejem:grad_H_dim1},
es la aplicación $ \varphi : \mathbb{H} \to \mathbb{H} $
dada por $ \varphi(a+bi+cj+dk) = a+bi+cj-dk $.

Sean $ \varphi : \mathcal{D} \to \mathcal{D} $
y $ \varphi' : \mathcal{D}' \to \mathcal{D}' $
involuciones en dos álgebras graduadas $\mathcal{D}$ y $\mathcal{D}'$,
cuyas graduaciones denotamos $\Gamma$ y $\Gamma'$.
Decimos que el par $(\Gamma,\varphi)$ es isomorfo
(respectivamente equivalente)
al par $(\Gamma',\varphi')$
si existe un isomorfismo
(respectivamente equivalencia)
de álgebras graduadas
$ \psi : \mathcal{D} \to \mathcal{D}' $
que también conmuta con las involuciones $\varphi$ y $\varphi'$,
es decir,
tal que $ \varphi'(\psi(X)) = \psi(\varphi(X)) $
para todo $ X \in \mathcal{D} $.

En el segundo artículo \cite{BKR2018a}
clasificamos los pares $(\Gamma,\varphi)$,
salvo isomorfismo y salvo equivalencia,
para todas las posibles involuciones $\varphi$
en el álgebra graduada $\mathcal{D}$.
Aunque aparecen muchos casos,
en dicho artículo hicimos el esfuerzo de escribir explícitamente todos,
para que la clasificación sirva como referencia.
En las siguientes secciones vamos a mostrar
algunos de los casos más ilustrativos.

\section{Caso (1-a)}

Supongamos que las componentes homogéneas
de la graduación $\Gamma$ tienen dimensión $1$,
y que el álgebra subyacente $\mathcal{D}$
es isomorfa a $M_n(\mathbb{R})$.
Es decir,
el álgebra graduada $\mathcal{D}$ es equivalente
al representante (1-a) del teorema \ref{teor:art_uno_dim1_equiv}.
Recordamos que,
en esta situación,
el soporte $T$ de $\Gamma$ es un grupo isomorfo a $\mathbb{Z}_2^{2m}$
($ n = 2^m $),
la aplicación $ \beta : T \times T \to \{ \pm 1 \} $
definida por la ecuación \eqref{ec:rel_conm}
es un bicarácter alternado de tipo I,
y la aplicación $ \mu : T \to \{ \pm 1 \} $
definida por la ecuación \eqref{ec:sign_cuadr}
es una forma cuadrática tal que
$ \beta_{\mu} = \beta $ y $ \Arf(\mu) = +1 $.
Además,
la clase de isomorfismo de $\Gamma$ está determinada por $\mu$.

Un cálculo sencillo nos dice que
las involuciones en el álgebra graduada $\mathcal{D}$
están en correspondencia biyectiva con
las formas cuadráticas $ \eta : T \to \{ \pm 1 \} $
tales que $ \beta_{\eta} = \beta $.
A saber,
la correspondencia viene dada por la siguiente ecuación,
donde $ X_t \in \mathcal{D}_t $:
\begin{equation}\label{ec:eta}
\varphi(X_t) = \eta(t) X_t
\end{equation}
Notemos que hay $2^{2m}$ posibles formas cuadráticas $\eta$
para el bicarácter alternado $\beta$ fijado,
es decir,
$2^{2m}$ involuciones no isomorfas entre sí.
En total,
el par $(\Gamma,\varphi)$ está determinado salvo isomorfismo por
dos formas cuadráticas $\mu$ y $\eta$ definidas en el mismo grupo,
el soporte $ T \cong \mathbb{Z}_2^{2m} $.

Más complicada es la clasificación salvo equivalencia.
Una conjetura razonable es
dividir las $2^{2m}$ formas cuadráticas $\eta$ en tres conjuntos:
\begin{enumerate}
	\item[(1)] $ \eta = \mu $.
	\item[(2)] $ \Arf(\eta) = +1 $ pero $ \eta \neq \mu $.
	\item[(3)] $ \Arf(\eta) = -1 $.
\end{enumerate}
Es decir,
por un lado tenemos el caso distinguido $ \eta = \mu $,
y por otro lado separamos
las formas cuadráticas que corresponderían
con una graduación de división en $M_n(\mathbb{R})$
de aquellas que corresponderían
con una graduación de división en $M_{n/2}(\mathbb{H})$.

En efecto,
estas son las tres clases de equivalencia de involuciones,
pero la demostración puede ser laboriosa,
porque hay muchos casos que analizar.
Afortunadamente
encontramos una manera de tratar con todos ellos a la vez.
Aquí no vamos a repetir los argumentos,
pero al menos indiquemos que la idea clave
está en considerar los dos lemas siguientes.

\begin{lema}
Sean $\mu$ y $\eta$ dos formas cuadráticas diferentes
en un grupo abeliano finito $T$
tales que $ \beta_{\mu} = \beta_{\eta} $.
Entonces $ \{ t \in T \mid \mu(t) = \eta(t) \} $
es un subgrupo de $T$ de índice $2$.
\end{lema}

\begin{lema}
Sea $\beta$ un bicarácter alternado de tipo I
en un grupo abeliano finito $T$.
Entonces la siguiente aplicación es biyectiva:
\begin{equation}
\begin{split}
T & \longrightarrow \{ S \mid
S \text{ es un subgrupo de } T \text{ de índice } 1 \text{ o } 2 \}
\\
u & \longmapsto u^{\perp} = \{ v \in T \mid \beta(u,v) = 1 \}
\end{split}
\end{equation}
\end{lema}

Finalmente,
subrayemos de nuevo que
la involución correspondiente a $ \eta = \mu $ es distinguida.
Si tomamos como $\mathcal{D}$
el producto tensorial (1-a) del teorema \ref{teor:art_uno_dim1_equiv},
y lo identificamos con $M_n(\mathbb{R})$ de la manera natural,
entonces esta involución distinguida $\varphi$
es la transposición de matrices,
$ \varphi(X) = X^T $ para toda matriz $ X \in M_n(\mathbb{R}) $.

\section{Caso (1-c)}

El caso (1-c),
en el que el álgebra subyacente $\mathcal{D}$
es isomorfa a $M_n(\mathbb{C})$
y el soporte $T$ es un grupo isomorfo a $\mathbb{Z}_2^{2m+1}$
($ n = 2^m $),
es muy similar.
De hecho las ecuaciones \eqref{ec:rel_conm},
\eqref{ec:sign_cuadr} y \eqref{ec:eta}
siguen siendo válidas,
y la clase de isomorfismo está determinada por
las formas cuadráticas $\mu$ y $\eta$.
Señalemos las diferencias.
Ahora el bicarácter alternado $\beta$ es de tipo II,
y las condiciones que tiene que satisfacer $\mu$ son
$ \beta_{\mu} = \beta $ y $ \mu(f_{\beta}) = -1 $
(recordemos que $ \rad(\beta) = \{ e , f_{\beta} \} $).
El invariante de Arf de $\mu$ es siempre $0$.

La condición $ \mu(f_{\beta}) = -1 $ es interesante.
Es debida a que el elemento central $iI$
es homogéneo de grado $f_{\beta}$
y a que $ (iI)^2 = -I $.
Sin embargo
cuando calculamos la ecuación \eqref{ec:eta},
nos damos cuenta de que la única condición que tiene que satisfacer $\eta$
es $ \beta_{\eta} = \beta $;
así que es posible que $ \eta(f_{\beta}) = +1 $.
De hecho,
las $2^{2m+1}$ formas cuadráticas $\eta$
se dividen en cuatro clases de equivalencia:
\begin{enumerate}
	\item[(1)] $ \eta = \mu $.
	\item[(2)] $ \eta(f_{\beta}) = -1 $ pero $ \eta \neq \mu $.
	\item[(3)] $ \eta(f_{\beta}) = +1 $ y $ \Arf(\eta) = +1 $.
	\item[(4)] $ \eta(f_{\beta}) = +1 $ y $ \Arf(\eta) = -1 $.
\end{enumerate}

Para llegar a esta conclusión,
calculamos las graduaciones de división que corresponderían con
las formas cuadráticas $\mu$ tales que
$ \beta_{\mu} = \beta $ y $ \mu(f_{\beta}) = +1 $
(en lugar de $ \mu(f_{\beta}) = -1 $).
Obtuvimos dos clases de equivalencia,
una para el caso $ \Arf(\mu) = +1 $,
con álgebra subyacente $ M_n(\mathbb{R}) \times M_n(\mathbb{R}) $,
y la otra para el caso $ \Arf(\mu) = -1 $,
con álgebra subyacente $ M_{n/2}(\mathbb{H}) \times M_{n/2}(\mathbb{H}) $.
Vemos que estas dos clases están reflejadas
en los casos (3) y (4) de la lista anterior.
Por lo tanto,
la clasificación de las graduaciones de división
en las álgebras (no necesariamente simples)
cuyo centro tiene dimensión $1$ o $2$
nos ayuda a entender la de
los pares $(\Gamma,\varphi)$ en las álgebras simples.
Así que también abordamos
parte de esta clasificación de graduaciones de división
en el segundo artículo \cite{BKR2018a}.

Finalmente notemos que,
en la literatura,
una forma cuadrática $\mu$ se dice regular si
o bien su polarización $\beta$ es de tipo I,
o bien $\beta$ es de tipo II y además $ \mu(f_{\beta}) = -1 $.
Esta definición puede parecer un poco extraña.
Sin embargo,
ahora vemos que las formas cuadráticas regulares
son precisamente aquellas que corresponden,
mediante la ecuación \eqref{ec:sign_cuadr},
con graduaciones de división en álgebras simples.

\section{Nuevas graduaciones de división}

Recordemos que los representantes de las clases de equivalencia
del teorema \ref{teor:art_uno_dim1_equiv}
están escritos como productos tensoriales graduados
de copias de las cuatro álgebras graduadas de división de los ejemplos
\ref{ejem:grad_M2R_dim1}, \ref{ejem:grad_H_dim1},
\ref{ejem:grad_C_dim1} y \ref{ejem:grad_M2C_dim1}.
Ahora nos aparecen más graduaciones de división
en álgebras que no son simples,
concretamente en
$ M_n(\mathbb{R}) \times M_n(\mathbb{R}) $,
$ M_{n/2}(\mathbb{H}) \times M_{n/2}(\mathbb{H}) $ y
$ M_n(\mathbb{R}) \times M_{n/2}(\mathbb{H}) $.
Para expresarlas como productos tensoriales graduados,
necesitamos tres nuevos tipos de factores,
las álgebras graduadas de división del ejemplo \ref{ejem:grad_semisimple}.
Subrayamos que es curioso que halláramos
las dos graduaciones de división de la figura \ref{fig_es:grad_semisimple}
porque estuviéramos buscando las involuciones
en el álgebra graduada de la figura \ref{fig_es:grad_M2C_dim1}.

\begin{ejemplo}\label{ejem:grad_semisimple}
La siguiente ecuación define una graduación de división
en el álgebra $ \mathbb{R} \times \mathbb{R} $
por el grupo $\mathbb{Z}_2$:
\begin{equation}
\mathbb{R} \times \mathbb{R} = \mathbb{R}(1,1) \oplus \mathbb{R}(1,-1)
\end{equation}
Además,
la figura \ref{fig_es:grad_semisimple} muestra dos graduaciones de división
por el grupo $ \mathbb{Z}_2 \times \mathbb{Z}_4 =
\langle a,b \mid a^2=e=b^4 , \allowbreak \, ab=ba \rangle $
en las álgebras $ M_2( \mathbb{R} \times \mathbb{R} ) $
y $ M_2(\mathbb{R}) \times \mathbb{H} $.
Los grados se asignan en el mismo orden que seguimos
en el ejemplo \ref{ejem:grad_M2C_dim1}:
$e$,~$a$; $b$,~$ab$; $b^2$,~$ab^2$; $b^3$,~$ab^3$.
\begin{figure}
\begin{align*}
M_2( \mathbb{R} \times \mathbb{R} ) = {} &
	\mathbb{R} \begin{pmatrix} (1,1) & (0,0) \\ (0,0) & (1,1) \end{pmatrix}
	\oplus
	\mathbb{R} \begin{pmatrix} (1,1) & (0,0) \\ (0,0) & (-1,-1) \end{pmatrix}
	\oplus
\\
&
	\mathbb{R} \begin{pmatrix} (0,0) & (1,-1) \\ (1,1) & (0,0) \end{pmatrix}
	\oplus
	\mathbb{R} \begin{pmatrix} (0,0) & (1,-1) \\ (-1,-1) & (0,0) \end{pmatrix}
	\oplus
\\
&
	\mathbb{R} \begin{pmatrix} (1,-1) & (0,0) \\ (0,0) & (1,-1) \end{pmatrix}
	\oplus
	\mathbb{R} \begin{pmatrix} (1,-1) & (0,0) \\ (0,0) & (-1,1) \end{pmatrix}
	\oplus
\\
&
	\mathbb{R} \begin{pmatrix} (0,0) & (1,1) \\ (1,-1) & (0,0) \end{pmatrix}
	\oplus
	\mathbb{R} \begin{pmatrix} (0,0) & (1,1) \\ (-1,1) & (0,0) \end{pmatrix}
\end{align*}
\begin{align*}
M_2(\mathbb{R}) \times \mathbb{H} = {} &
	\mathbb{R} \left( \begin{pmatrix} 1 & 0 \\ 0 & 1 \end{pmatrix} , 1 \right)
	\oplus
	\mathbb{R} \left( \begin{pmatrix} 0 & -1 \\ 1 & 0 \end{pmatrix} , i \right)
	\oplus
\\
&
	\mathbb{R} \left( \begin{pmatrix} 1 & 0 \\ 0 & -1 \end{pmatrix} , j \right)
	\oplus
	\mathbb{R} \left( \begin{pmatrix} 0 & 1 \\ 1 & 0 \end{pmatrix} , k \right)
	\oplus
\\
&
	\mathbb{R} \left( \begin{pmatrix} 1 & 0 \\ 0 & 1 \end{pmatrix} , -1 \right)
	\oplus
	\mathbb{R} \left( \begin{pmatrix} 0 & -1 \\ 1 & 0 \end{pmatrix} , -i \right)
	\oplus
\\
&
	\mathbb{R} \left( \begin{pmatrix} 1 & 0 \\ 0 & -1 \end{pmatrix} , -j \right)
	\oplus
	\mathbb{R} \left( \begin{pmatrix} 0 & 1 \\ 1 & 0 \end{pmatrix} , -k \right)
\end{align*}
\caption{Graduaciones de división del ejemplo \ref{ejem:grad_semisimple}.}
\label{fig_es:grad_semisimple}
\end{figure}
\end{ejemplo}

\section{Caso (2-f)}

Finalmente consideremos el caso (2-f),
en el que el álgebra real $\mathcal{D}$
es isomorfa a $M_n(\mathbb{C})$,
y la componente neutra de la graduación $\Gamma$
coincide con el centro de $\mathcal{D}$.
Esto significa que $\Gamma$ puede ser interpretada
como una graduación del álgebra compleja $M_n(\mathbb{C})$.

En esta situación,
las relaciones de conmutación de la ecuación \eqref{ec:rel_conm}
son suficientes para determinar el álgebra graduada $\mathcal{D}$.
La forma cuadrática $\mu$ de la ecuación \eqref{ec:sign_cuadr}
ya no está definida.
Ahora $T$ es un grupo isomorfo a
$ \mathbb{Z}_{\ell_1}^2 \times \dots \times \mathbb{Z}_{\ell_r}^2 $,
donde $ \ell_1 \cdots \ell_r = n $.
Mientras que el bicarácter alternado $\beta$
puede tomar valores en $ \mathbb{C} \setminus \{ 0 \} $,
y la única condición que tiene que satisfacer es $ \rad(\beta) = \{ e \} $.

Si $\varphi$ es una involución en el álgebra graduada $\mathcal{D}$,
entonces o bien $ \varphi(iI) = +iI $ o bien $ \varphi(iI) = -iI $.
La clasificación de los pares $(\Gamma,\varphi)$
tales que $ \varphi(iI) = +iI $
ya había sido estudiada en \cite[proposiciones 2.51 y 2.53]{EK2013},
porque dichas involuciones respetan la estructura compleja de $\mathcal{D}$.
Es muy similar a las clasificaciones de los casos
que hemos visto anteriormente,
en los que las componentes homogéneas tienen dimensión $1$.

Centrémonos en las involuciones $\varphi$ tales que $ \varphi(iI) = -iI $.
Su clasificación es totalmente diferente.
Es más,
creo que la sección 8 de \cite{BKR2018a}
es una de las partes más interesantes del artículo;
por cierto,
puede ser leída independientemente del resto del texto.

Definimos $ T_{[2]} =  \{ t\in T \mid t^2 = e \} $.
Las clases de isomorfismo de los pares $(\Gamma,\varphi)$
están en correspondencia biyectiva con
los subgrupos $S$ de $T_{[2]}$ de índice $1$ o $2$
a través de la siguiente ecuación:
\begin{equation}
S = \{ t \in T_{[2]} \mid
\exists X \in \mathcal{D}_t
\text{ tal que } X^2 = +I
\text{ y } \varphi(X) = X \}
\end{equation}

Este es un buen ejemplo de algunas de las dificultades
que aparecen en la investigación matemática.
La demostración no es tan complicada,
el verdadero reto fue darnos cuenta de qué es lo que teníamos que demostrar,
sin siquiera saber si este era un problema resoluble.
Por otro lado,
tenemos que advertir de que la clasificación salvo equivalencia
es una de la partes más difíciles del artículo.

Notemos que hay una clase de isomorfismo distinguida,
la correspondiente a $ S = T_{[2]} $.
Las involuciones que pertenecen a esta clase
tienen signatura $ \sqrt{ \vert T_{[2]} \vert } $,
mientras que el resto de involuciones del álgebra graduada $\mathcal{D}$
tienen signatura $0$.

\selectlanguage{spanish}
\chapter{Tercer artículo}\label{cap:art_3}

\begin{enumerate}

\item[\cite{BKR2018b}]
Y. Bahturin, M. Kochetov\ and\ A. Rodrigo-Escudero,
Gradings on classical central simple real Lie algebras,
J. Algebra {\bf 506} (2018), 1--42.
\\
\url{https://arxiv.org/abs/1707.07909}
\\
\url{https://doi.org/10.1016/j.jalgebra.2018.02.036}

\end{enumerate}

\section{Objetivo}\label{secc:art_3_obj}

Un álgebra de Lie $\mathcal{L}$ es
un álgebra (no necesariamente asociativa)
cuyo producto bilineal
$ [ \cdot , \cdot ] : \mathcal{L} \times \mathcal{L} \to \mathcal{L} $
satisface las siguientes dos propiedades:
\begin{enumerate}
\item Anticonmutatividad:
$ [x,x] = 0 $ para todo $ x \in \mathcal{L} $.
\item Identidad de Jacobi:
$ [[x,y],z] + [[y,z],x] + [[z,x],y] = 0 $
para todos $ x,y,z \in \mathcal{L} $.
\end{enumerate}

Killing (1890) y Cartan \cite{Car1894} clasificaron
las álgebras de Lie complejas (de dimensión finita) semisimples
gracias a una graduación:
la descomposición en espacios de raíces
relativa a una subálgebra de Cartan.
Aparecen cuatro familias infinitas de álgebras de Lie simples,
$\{A_r\}_{r=1}^{\infty}$,
$\{B_r\}_{r=2}^{\infty}$,
$\{C_r\}_{r=3}^{\infty}$ y
$\{D_r\}_{r=4}^{\infty}$,
llamadas clásicas,
y cinco álgebras de Lie simples excepcionales,
$E_6$, $E_7$, $E_8$, $F_4$ y $G_2$.
Por otro lado,
en \cite{Wed1908} Wedderburn resolvió
el problema análogo en el caso asociativo.
Pese a que el resultado es más sencillo,
todas las álgebras asociativas complejas (de dimensión finita) simples
pertenecen a la misma familia infinita
$ \{ M_n(\mathbb{C}) \}_{n=1}^{\infty} $,
es 14 años posterior al de Cartan.
Este es claramente un buen ejemplo
de la importancia que tienen las álgebras de Lie
dentro de las matemáticas y la física.

Nosotros estamos interesados en
las álgebras de Lie reales (de dimensión finita) simples.
El centroide de una tal álgebra $\mathcal{L}$
es o bien isomorfo a $\mathbb{R}$ o bien a $\mathbb{C}$.
En el segundo caso $\mathcal{L}$ es simplemente
un álgebra sobre el cuerpo de los complejos,
pero considerada como un álgebra sobre el cuerpo de los reales.
En el primer caso $\mathcal{L}$ es una forma real de su complexificación
$ \mathbb{C} \otimes_{\mathbb{R}} \mathcal{L} $,
y se dice que $\mathcal{L}$ es simple central.
El tipo de un álgebra de Lie real simple
es el del álgebra compleja del que proviene.
Así,
tanto el álgebra compleja $E_6$,
vista como álgebra real,
como sus formas reales son álgebras de tipo $E_6$.

Recordemos que nuestro objetivo no es clasificar las álgebras de Lie,
porque estas ya han sido extensamente estudiadas,
sino sus posibles graduaciones.
En concreto,
el resultado del tercer artículo \cite{BKR2018b} es el siguiente.

\begin{objetivo}\label{obj_es:art_3}
Para todo grupo abeliano $G$,
clasificamos salvo isomorfismo todas las $G$-graduaciones
en las álgebras de Lie simples centrales clásicas,
excepto las de tipo $D_4$,
sobre el cuerpo de los números reales
(o cualquier cuerpo real cerrado).
\end{objetivo}

Remarquemos que se trata de un artículo muy técnico,
en el que necesitamos muchos parámetros para enunciar los teoremas.
La principal diferencia con respecto a
la clasificación análoga sobre el cuerpo de los complejos
(ver \cite{BK2010} y \cite{EK2013})
es la aparición de un parámetro $\sigma$ de signaturas.

\section{Álgebras asociativas con involución}\label{secc:alg_inv}

Veamos cómo
la clasificación de graduaciones en álgebras de Lie
se reduce a
la clasificación de graduaciones en álgebras asociativas con involución.

Sea $\mathcal{R}$ un álgebra asociativa,
con producto denotado mediante yuxtaposición.
Consideramos el nuevo producto en $\mathcal{R}$
dado por $ [x,y] = xy-yx $ para todos $ x,y \in \mathcal{R} $.
Entonces $\mathcal{R}$,
dotada de este nuevo producto,
es un álgebra de Lie,
que denotamos mediante $\mathcal{R}^{(-)}$.
Si además $\varphi$ es una involución en $\mathcal{R}$,
podemos considerar el espacio de elementos antisimétricos:
\begin{equation}
\Skew(\mathcal{R},\varphi) = \{ x \in \mathcal{R} \mid \varphi(x) = -x \}
\end{equation}
Tenemos que
$\Skew(\mathcal{R},\varphi)^{(-)}$ es una subálgebra
del álgebra de Lie $\mathcal{R}^{(-)}$.
Es bien conocido que las
álgebras de Lie reales simples centrales clásicas
se pueden expresar mediante esta construcción.
Por ejemplo,
las de tipo $C_r$ vienen dadas por tomar
o bien $ \mathcal{R} = M_{2r}(\mathbb{R}) $
o bien $ \mathcal{R} = M_r(\mathbb{H}) $
y $\varphi$ simpléctica.

Toda graduación en $(\mathcal{R},\varphi)$ induce
una graduación en $ \mathcal{L} = \Skew(\mathcal{R},\varphi)^{(-)} $
por restricción.
La teoría de esquemas afines en grupos permite demostrar que
esta correspondencia es en realidad una biyección
entre las graduaciones de grupos abelianos en $(\mathcal{R},\varphi)$
y las graduaciones en $\mathcal{L}$
(excepto si $\mathcal{L}$ es de tipo $A_1$ o $D_4$).
Si fijamos un grupo abeliano $G$,
obtenemos una biyección entre las respectivas
clases de isomorfismo de $G$-graduaciones.

Esta es la razón por la que en los artículos anteriores
clasificábamos graduaciones en álgebras asociativas.
Por supuesto que dichas clasificaciones son interesantes por sí mismas,
pero nuestra motivación era usarlas en el estudio de las álgebras de Lie.
Además,
ahora podemos explicar por qué siempre hemos asumido que
los grupos graduadores eran abelianos:
el soporte de una graduación en un álgebra de Lie simple
siempre genera un subgrupo abeliano del grupo graduador
(ver por ejemplo \cite[lema 2.1]{BZ2006} o \cite[proposición 1]{DM2006}).

\section{Resumen}

En esta sección vamos a dar una breve idea
de los ingredientes que aparecen en la clasificación.

Queremos caracterizar salvo $G$-isomorfismo
el álgebra graduada con involución $(\mathcal{R},\varphi)$.
Olvidémonos de $\varphi$ por el momento.
Escribimos $\mathcal{R}$ como $M_k(\mathcal{D})$,
de la misma forma que en la sección \ref{secc:est_art}.
Recordemos que,
como álgebra $G$-graduada,
$M_k(\mathcal{D})$ estaba definida por
el álgebra graduada de división $\mathcal{D}$
y $k$ elementos del grupo abeliano $G$.
En el primer artículo \cite{Rod2016} clasificamos las posibles $\mathcal{D}$.
Llamamos $T$ al soporte de $\mathcal{D}$
y fijamos en cada componente homogénea $\mathcal{D}_t$
un elemento homogéneo $X_t$.
Curiosamente,
en algunos casos el $X_e$ que escogemos no es la unidad
(esto sucede si $\mathcal{L}$ es
$\mathfrak{sp}_{2r}(\mathbb{R})$ o $ \mathfrak{u}^* (r) $,
y $ \mathcal{D}_e \cong \mathbb{C} $).
En realidad,
la clase de isomorfismo de $M_k(\mathcal{D})$
no cambia si permutamos los $k$ elementos de $G$,
o si multiplicamos cualquiera de ellos por un elemento de $T$.
Es decir,
la información está contenida en una ``función de multiplicidad''
$ \kappa : G/T \to \mathbb{Z}_{ \geq 0 } $.
Si enumeramos las multiplicidades no nulas,
$ \kappa(x_i) = k_i $ con $ 1 \leq i \leq s $,
entonces su suma total es $ k_1 + \dots + k_s = k $.

Tengamos ahora en cuenta la involución $\varphi$.
Después de algunos cálculos vemos que podemos escribirla como
$ \varphi(X) = \Phi^{-1} \varphi_0(X^T) \Phi $,
donde $\Phi$ es una matriz de $M_k(\mathcal{D})$ diagonal por bloques
y $\varphi_0$ es una involución de $\mathcal{D}$,
que actúa sobre $ X^T \in M_k(\mathcal{D}) $ elemento a elemento.
Recordemos que clasificamos las posibles $\varphi_0$
en el segundo artículo \cite{BKR2018a}.
En muchos casos vamos a poder elegir que
la involución $\varphi_0$ sea distinguida.
Los bloques de la matriz $\Phi$ son de la forma $ X_{t_i} S_i $,
donde las $S_i$ son matrices de tamaño $ k_i \times k_i $.
Como ejemplo de los posibles valores que puede tomar $S_i$
podemos mencionar las siguientes dos matrices;
en la primera $ k_i = p_i + q_i $,
y en la segunda $k_i$ tiene que ser par:
\begin{equation*}
\begin{pmatrix} I_{p_i} & 0 \\ 0 & - I_{q_i} \end{pmatrix}
\qquad \qquad
\begin{pmatrix} 0 & I_{ k_i / 2 } \\ - I_{ k_i / 2 } & 0 \end{pmatrix}
\end{equation*}
Definimos una ``función de signatura'' $ \sigma : G/T \to \mathbb{Z} $
como $ \sigma(x_i) = p_i - q_i $ para los valores de $i$ con signatura,
y $ \sigma(x) = 0 $ para todos los demás $ x \in G/T $.
Además,
necesitamos otros dos ingredientes para describir $\varphi$,
un parámetro $ \delta \in \{ \pm 1 \} $
dado por $ \varphi_0(\Phi^T) = \delta \Phi $,
y un elemento $ g_0 \in G $ que en cierto sentido recoge el grado de $\Phi$.
En total,
el álgebra graduada con involución $(\mathcal{R},\varphi)$
viene dada por seis parámetros:
$\mathcal{D}$, $\varphi_0$, $g_0$, $\kappa$, $\sigma$, $\delta$.

El siguiente paso es considerar dos de estas álgebras graduadas con involución,
$ (\mathcal{R},\varphi) = M( \mathcal{D} , \allowbreak
\varphi_0 , \allowbreak g_0 , \allowbreak
\kappa , \allowbreak \sigma , \allowbreak \delta ) $ y
$ (\mathcal{R}',\varphi') = M( \mathcal{D}' , \allowbreak
\varphi_0' , \allowbreak g_0' , \allowbreak
\kappa' , \allowbreak \sigma' , \allowbreak \delta' ) $,
y determinar cuándo son isomorfas.
Para ello podemos suponer que $ \mathcal{D} = \mathcal{D}' $,
$ \varphi_0 = \varphi_0' $ y $ \delta = \delta' $,
pero por diferentes motivos.
Por un lado,
si las álgebras graduadas de división $\mathcal{D}$ y $\mathcal{D}'$
no son isomorfas,
entonces tampoco lo son $\mathcal{R}$ y $\mathcal{R}'$.
Por lo tanto la clasificación total será la suma de las clasificaciones
según $\mathcal{D}$ recorre todas las posibles clases de isomorfismo.
Por otro lado,
podemos cambiar la involución $\varphi_0$ y elegir la que más nos convenga
(si es posible elegimos la involución distinguida de $\mathcal{D}$).
Si escogiéramos otra,
obtendríamos la misma clasificación,
pero parametrizada de distinta forma.
Finalmente también podemos asumir que $\delta$ está fijada;
curiosamente si $\mathcal{L}$ es de tipo $A_r$,
esto es debido al segundo motivo,
mientras que si $\mathcal{L}$ es de tipo $B_r$, $C_r$ o $D_r$,
es por el primer motivo.

La cuestión está en los parámetros $(g_0,\kappa,\sigma)$.
La función de signatura $\sigma$ no es el invariante adecuado
para resolver la condición de isomorfismo,
ya que el signo de $ p_i - q_i $
depende del representante que tomemos
en la clase lateral $ x_i \in G/T $.
Para evitar esta ambigüedad,
consideramos un nuevo parámetro,
que llamamos ``función de signatura extendida''
$ \tilde{\sigma} : G \to \mathbb{Z} $,
y que es equivalente a $\sigma$.
El resultado lo presentamos mediante
la acción de un grupo que actúa sobre los parámetros,
de forma que $(\mathcal{R},\varphi)$ y $(\mathcal{R}',\varphi')$
son isomorfas si y solo si
$(g_0,\kappa,\sigma)$ y $(g_0',\kappa',\sigma')$
están en la misma órbita.
Por ejemplo,
en los tipos $B_r$, $C_r$ y $D_r$
esta acción consiste en la suma de
la acción natural de $ g \in G $ sobre $\kappa$ y $\tilde{\sigma}$
(que también reemplaza $g_0$ por $ g^{-2} g_0 $),
y la acción de cambiar todas las signaturas de signo.

Finalmente particularizamos la clasificación
a cada uno de los tipos de
álgebras de Lie reales simples centrales clásicas.
En algunos casos tenemos que calcular
la signatura global de la involución $\varphi$
en función de los parámetros,
lo que por cierto nos da una bonita fórmula.

\selectlanguage{spanish}
\chapter{Cuarto artículo}\label{cap:art_4}

\begin{enumerate}

\item[\cite{ER2018}]
A. Elduque\ and\ A. Rodrigo-Escudero,
Clifford algebras as twisted gro\-up algebras and the Arf invariant,
Adv. Appl. Clifford Algebr. {\bf 28} (2018), no.~2, 28:41.
\\
\url{https://arxiv.org/abs/1801.07002}
\\
\url{https://doi.org/10.1007/s00006-018-0862-y}

\end{enumerate}

\section{Introducción}\label{secc:art_4_intr}

El propósito de los tres primeros artículos de mi tesis
ha sido estudiar las graduaciones en las álgebras reales y clasificarlas.
Ahora abordamos un tema más práctico;
el cuarto artículo \cite{ER2018}
es un ejemplo de aplicación de las graduaciones
para resolver problemas en otras áreas de las matemáticas.

Mientras clasificábamos graduaciones de división en álgebras asociativas,
nos dimos cuenta de que estas están íntimamente relacionadas
con las álgebras de Clifford \cite[observación 17]{Rod2016}.
Así que una vez alcanzada la meta inicial del doctorado,
decidimos volver a analizar con más detalle esta conexión.
El resultado es un artículo más relajado que los tres anteriores,
ya que los teoremas que aparecen ya eran conocidos.
Como vamos a ver,
gracias a las graduaciones podemos demostrarlos
de una manera alternativa y bastante sencilla,
en la que el invariante de Arf juega un papel fundamental.

Este artículo posee
una peculiaridad muy poco frecuente entre los artículos matemáticos:
se puede explicar en una charla.
De hecho este capítulo es una adaptación
de una conferencia que impartí en febrero de 2018
en el seminario que realizamos entre los estudiantes de doctorado.

\begin{objetivo}\label{obj_es:art_4}
En este capítulo mostraremos cómo la teoría de graduaciones
permite dar demostraciones alternativas a teoremas clásicos.
Así,
repasaremos las propiedades básicas de las álgebras de Clifford
y veremos que,
en el caso real,
estas están determinadas por el invariante de Arf.
\end{objetivo}

\section{Graduaciones de división en álgebras de Clifford}

Empezamos recordando la definición de álgebra de Clifford.
Sea $V$ un espacio vectorial de dimensión finita $N$
sobre un cuerpo $\mathbb{F}$ de característica distinta de $2$.
Más adelante nos restringiremos al caso $ \mathbb{F} = \mathbb{R} $,
pero de momento los argumentos son válidos en general.
Sea $ Q : V \to \mathbb{F} $ una forma cuadrática en $V$.
Esto es equivalente a decir que existe una forma bilineal simétrica
$ B : V \times V \to \mathbb{F} $ satisfaciendo,
para todo $ v \in V $,
la siguiente ecuación:
\begin{equation}
2 Q(v) = B(v,v)
\end{equation}
Podemos recuperar $B$ a partir de $Q$,
ya que para todos $ u,v \in V $ tenemos la siguiente fórmula:
\begin{equation}\label{ec:form_bil_form_cuadr}
B(u,v) = Q(u+v) - Q(u) - Q(v)
\end{equation}
Notemos que la ecuación \eqref{ec:polarizacion}
no es más que la ecuación \eqref{ec:form_bil_form_cuadr}
escrita con notación multiplicativa.

Sea $ T(V) = \mathbb{F} \oplus V \oplus ( V \otimes V)
\oplus ( V \otimes V \otimes V ) \oplus \dots $
el álgebra tensorial de $V$,
y sea $I(Q)$ el ideal bilátero de $T(V)$
generado por todos los elementos de la forma
$ v \otimes v - Q(v)1 $ con $ v \in V $.
El álgebra de Clifford de $(V,Q)$ es $ \Cl(V,Q) = T(V) / I(Q) $.
Así,
$\Cl(V,Q)$ es un álgebra asociativa sobre el cuerpo $\mathbb{F}$,
pero de momento no sabemos cuál es su dimensión.
En los dos próximos párrafos vamos a demostrar que
el álgebra $\Cl(V,Q)$ es unitaria,
es decir,
que existe un elemento distinto de cero y neutro para el producto,
que como siempre denotamos $ 1 \in \Cl(V,Q) $.
Esta cuestión es equivalente a demostrar que
$ 1 \in T(V) $ no pertenece a $I(Q)$,
y es algo más complicada de lo que parece.
Como corolario obtenemos que la dimensión del álgebra $\Cl(V,Q)$
es estrictamente mayor que $0$.

Primero observamos que el álgebra de Clifford
satisface la siguiente propiedad universal:
Para cualquier $\mathbb{F}$-álgebra $\mathcal{A}$ asociativa y unitaria
dotada de una aplicación $\mathbb{F}$-lineal $ f : V \to \mathcal{A} $
tal que $ f(v)^2 = Q(v) 1 $ para todo $ v \in V $,
existe un único homomorfismo de $\mathbb{F}$-álgebras unitarias
$ \varphi : \Cl(V,Q) \to \mathcal{A} $ tal que
$ f(v) = \varphi(v+I(Q)) $ para todo $ v \in V $.
La existencia de una tal álgebra unitaria $\mathcal{A}$
implicará que $ 1 \in T(V) $ no pertenece a $I(Q)$,
ya que $\varphi(1+I(Q))$ es la unidad de $\mathcal{A}$,
que es diferente del cero de $\mathcal{A}$.

Ahora vamos a ver,
siguiendo \cite[página 38]{Che1954},
que podemos tomar como $\mathcal{A}$
el álgebra de endomorfismos del álgebra exterior $\Lambda(V)$.
Sea $ B_0 : V \times V \to \mathbb{F} $
la forma bilineal simétrica dada por $ 2 B_0 = B $
(luego $ Q(v) = B_0(v,v) $ para todo $ v \in V $).
Para todo $ v \in V $,
consideramos los endomorfismos $L_v$ y $\delta_v$ de $\Lambda(V)$,
donde $L_v$ es la multiplicación a izquierda por $v$,
y $\delta_v$ es la antiderivación correspondiente
a la forma lineal $B_0(v,\cdot)$.
Es decir,
$\delta_v$ está definida por $ \delta_v(1) = 0 $ y la siguiente ecuación:
\begin{equation*}
\delta_v ( u_1 \wedge \dots \wedge u_k )
= \sum_{i=1}^{k} (-1)^{i-1} B_0(v,u_i)
u_1 \wedge \dots \wedge u_{i-1}
\wedge u_{i+1} \wedge \dots \wedge u_k
\end{equation*}
Para todo $ u \in V $ y todo $ x \in \Lambda(V) $ tenemos que
$ \delta_v ( u \wedge x ) = B_0(v,u) x - u \wedge \delta_v(x) $.
De esta fórmula deducimos que $ \delta_v^2 = 0 $
y que $ L_u \delta_v + \delta_v L_u = B_0(v,u) 1 $.
Por lo tanto $ ( L_v + \delta_v )^2 = Q(v)1 $,
y hemos construido nuestra $\mathcal{A}$.
Esto concluye la demostración de que $\Cl(V,Q)$ es un álgebra unitaria.

Por la definición de álgebra de Clifford,
en $\Cl(V,Q)$ tenemos que,
para todo $ v \in V $,
se cumple la siguiente fórmula:
\begin{equation}\label{ec:Cl_cuadr}
v^2 = Q(v)
\end{equation}
Por lo tanto $ uv + vu = B(u,v) $ para todos $ u,v \in V $.
Sea $ v_1 , \dots , v_N $ una base ortogonal de $V$,
para todos $ i \neq j $ tenemos la siguiente ecuación:
\begin{equation}\label{ec:Cl_anticonm}
v_i v_j = - v_j v_i
\end{equation}
Si además tenemos que $ Q(v_1) = \dots = Q(v_N) = 1 $,
denotamos el álgebra de Clifford $\Cl_N(\mathbb{F})$.
En el caso $ \mathbb{F} = \mathbb{R} $,
si $ Q(v_1) = \dots = Q(v_p) = +1 $ y
$ Q(v_{p+1}) = \dots = Q(v_N) = -1 $,
denotamos el álgebra de Clifford $\Cl_{p,q}(\mathbb{R})$,
donde $ p+q = N $.

Llamemos $ v_I = v_{i_1} \cdots v_{i_r} $,
donde $ 1 \leq i_1 < \dots < i_r \leq N $
e $ I = \{ i_1 , \allowbreak \dots , \allowbreak i_r \} $,
y denotemos también $ v_{\emptyset} = 1 $.
Debido a las ecuaciones \eqref{ec:Cl_cuadr} y \eqref{ec:Cl_anticonm},
$\Cl(V,Q)$ está linealmente generada por
$ \{ v_I \}_{ I \subseteq \{ 1,\dots,N \} } $.
Luego la dimensión de $\Cl(V,Q)$ es como mucho $2^N$.
De hecho $ \dim \Cl(V,Q) = 2^N $,
pero la demostración,
al igual que la de que $\Cl(V,Q)$ es un álgebra unitaria,
no es trivial,
ver por ejemplo \cite[II.1.2]{Che1954} o \cite[teorema V.1.8]{Lam2005}.
La siguiente graduación nos va a permitir dar
una demostración alternativa bastante sencilla.

\begin{ejemplo}\label{ejem:Cl_grad}
Podemos dotar al álgebra de Clifford $\Cl(V,Q)$
de una graduación por el grupo abeliano $\mathbb{Z}_2^N$,
como en \cite[proposición 2.2]{AM2002}.
Primero definimos una $\mathbb{Z}_2^N$-graduación
en el álgebra tensorial $T(V)$
mediante la siguiente asignación:
\begin{equation}
\deg v_i = ( \bar{0},\dots,\bar{0}, \overbrace{\bar{1}}^i
,\bar{0},\dots,\bar{0} ) \in \mathbb{Z}_2^N
\end{equation}
El ideal $I(Q)$ está generado por los elementos de la forma
$ v_i \otimes v_j + v_j \otimes v_i - B(v_i,v_j) $,
que son homogéneos porque nuestra base $ v_1 , \dots , v_N $ es ortogonal,
luego es un ideal graduado.
Por lo tanto $\Cl(V,Q)$ hereda la $\mathbb{Z}_2^N$-graduación de $T(V)$
mediante $ \Cl(V,Q)_g = T(V)_g + I(Q) $.
\end{ejemplo}

\begin{proposicion}
La dimensión de un álgebra de Clifford $\Cl(V,Q)$ es $2^N$,
donde $N$ es la dimensión del espacio vectorial $V$:
\begin{equation}
\dim \Cl(V,Q) = 2^N
\end{equation}
\end{proposicion}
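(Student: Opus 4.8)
The plan is to pair the upper bound $\dim \Cl(V,Q) \leq 2^N$, already obtained from the spanning set $\{ v_I \}_{ I \subseteq \{ 1,\dots,N \} }$, with a matching lower bound. The key observation is that the $\mathbb{Z}_2^N$-grading of example \ref{ejem:Cl_grad} assigns to each $v_I$ its own homogeneous component, namely the one indexed by the indicator vector of $I$. Hence the family $\{ v_I \}$ consists of elements lying in pairwise distinct components, so it is automatically linearly independent provided only that each $v_I$ is nonzero. The entire problem therefore reduces to proving $v_I \neq 0$ for every $I \subseteq \{ 1,\dots,N \}$.

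First I would treat the nondegenerate case, in which $Q(v_i) \neq 0$ for all $i$. Squaring $v_I$ and using equations \eqref{ec:Cl_cuadr} and \eqref{ec:Cl_anticonm} to move each factor past the others yields
\begin{equation*}
v_I^2 = \pm \, Q(v_{i_1}) \cdots Q(v_{i_r}) ,
\end{equation*}
a nonzero scalar multiple of $1$, which is itself nonzero since we have already proved that $1 \neq 0$ in $\Cl(V,Q)$. Thus $v_I \neq 0$ and the equality $\dim \Cl(V,Q) = 2^N$ holds in this case.

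The hard part is the degenerate case, where some $Q(v_i)$ may vanish: then $v_I^2$ can be zero, so nonvanishing of $v_I$ cannot be detected by squaring. The idea is instead to map $\Cl(V,Q)$ into a larger algebra whose structure is already understood and in which the image of $v_I$ is visibly nonzero. I would take
\begin{equation*}
\mathcal{A} = \mathbb{F}[X_1]/(X_1^2-Q(v_1)) \otimes \dots \otimes \mathbb{F}[X_N]/(X_N^2-Q(v_N)) \otimes \Cl_N(\mathbb{F}) ,
\end{equation*}
fix an orthonormal basis $u_1 , \dots , u_N$ of $\Cl_N(\mathbb{F})$ (for which the nondegenerate case already gives that $\{ u_I \}$ is a basis), and define $f : V \to \mathcal{A}$ by sending $v_i$ to $w_i = 1 \otimes \dots \otimes X_i \otimes \dots \otimes 1 \otimes u_i$. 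The crux is the verification that $w_i w_j + w_j w_i = B(v_i,v_j) \, 1$: for $i = j$ the polynomial slot contributes $X_i^2 = Q(v_i)$ while $u_i^2 = 1$, giving $w_i^2 = Q(v_i) 1$; for $i \neq j$ the commuting polynomial factors multiply the anticommuting $u_i$, $u_j$, and the two terms cancel, matching $B(v_i,v_j) = 0$. This relation is exactly what the universal property of $\Cl(V,Q)$ requires, so it produces a homomorphism sending each $v_i$ to $w_i$. Since the image $w_{i_1} \cdots w_{i_r}$ has nonzero $\Cl_N(\mathbb{F})$-component $u_I$ together with a nonzero monomial in the polynomial factors, it is a nonzero element of $\mathcal{A}$; therefore its preimage $v_I$ is nonzero, which completes the lower bound and hence the proposition. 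The one point demanding genuine care is checking the displayed anticommutation relation for all pairs, since it is precisely this that legitimizes invoking the universal property.
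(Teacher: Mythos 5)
Your proposal is correct and follows essentially the same route as the paper's own proof: the same use of the $\mathbb{Z}_2^N$-grading to reduce everything to showing $v_I \neq 0$, the same squaring argument $v_I^2 = \pm Q(v_{i_1}) \cdots Q(v_{i_r})$ in the nondegenerate case, and the same auxiliary algebra $\mathcal{A}$ together with the universal property in the degenerate case. The only difference is that you spell out the verification of $w_i w_j + w_j w_i = B(v_i,v_j)1$, which the paper states without detail.
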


\begin{proof}
Como los $v_I$ pertenecen a distintas componentes homogéneas,
son linealmente independientes siempre y cuando sean distintos de cero.
Supongamos primero que la forma cuadrática $Q$ es no degenerada,
es decir,
que $ Q(v_i) \neq 0 $ para todo $ 1 \leq i \leq N $.
Entonces $ v_I^2 = v_{i_1} \cdots v_{i_r} v_{i_1} \cdots v_{i_r}
= \pm Q(v_{i_1}) \cdots Q(v_{i_r}) $.
Este último término es distinto de cero,
porque ya hemos visto que $ 1 \neq 0 $ en $\Cl(V,Q)$,
por lo tanto $ v_I \neq 0 $ para todo
$ I \subseteq \{ 1,\dots,N \} $.

Por otro lado,
permitamos ahora que algunos de los $Q(v_i)$ sean cero.
Consideramos la $\mathbb{F}$-álgebra $\mathcal{A}$:
\begin{equation*}
\mathcal{A} = \mathbb{F}[X_1] / (X_1^2-Q(v_1)) \otimes \dots \otimes
\mathbb{F}[X_N] / (X_N^2-Q(v_N)) \otimes \Cl_N(\mathbb{F})
\end{equation*}
Notemos que la forma bilineal $B'$ de $\Cl_N(\mathbb{F})$
es distinta de la forma bilineal $B$ de $\Cl(V,Q)$.
Sea $ u_1 , \dots , u_N $ una base ortonormal de $(V,B')$;
ya hemos demostrado que
$ \{ u_I \}_{ I \subseteq \{ 1,\dots,N \} } $
es una base lineal de $\Cl_N(\mathbb{F})$.
Sea $ f : V \to \mathcal{A} $ la aplicación lineal
que envía $v_i$ al siguiente elemento $w_i$:
\begin{equation*}
w_i = 1 \otimes \dots \otimes 1 \otimes \overbrace{X_i}^i
\otimes 1 \otimes \dots \otimes 1 \otimes u_i \in \mathcal{A}
\end{equation*}
Tenemos que $ w_i w_j + w_j w_i = B(v_i,v_j) 1 $,
luego podemos aplicar la propiedad universal para obtener
un homomorfismo de $\mathbb{F}$-álgebras unitarias
de $\Cl(V,Q)$ a $\mathcal{A}$ que envía $v_i$ a $w_i$.
Concluimos que $ v_I = v_{i_1} \cdots v_{i_r} $ no puede ser cero,
porque su imagen $ w_{i_1} \cdots w_{i_r} $ es distinta de cero,
y por consiguiente $ \dim \Cl(V,Q) = 2^N $.
\end{proof}

Notemos que si la forma cuadrática $Q$ es no degenerada,
entonces la $\mathbb{Z}_2^N$-graduación en $\Cl(V,Q)$
del ejemplo \ref{ejem:Cl_grad}
es de división y sus componentes homogéneas tienen dimensión $1$.

\begin{proposicion}
La dimensión del centro de un álgebra de Clifford
$ \Cl ( V , \allowbreak Q ) $
viene dada por la siguiente ecuación,
donde $N$ es la dimensión del espacio vectorial $V$:
\begin{equation}
\dim Z( \Cl(V,Q) ) =
\begin{cases}
	1 & \text{si } N \text{ es par} \\
	2 & \text{si } N \text{ es impar}
\end{cases}
\end{equation}
\end{proposicion}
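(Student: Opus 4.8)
The plan is to exploit the $\mathbb{Z}_2^N$-grading on $\Cl(V,Q)$ introduced in example~\ref{ejem:Cl_grad}, which lets us avoid computing with arbitrary elements of the algebra. By the previous proposition the family $\{v_I\}_{I\subseteq\{1,\dots,N\}}$ is a basis, and each $v_I$ is homogeneous of a distinct degree (the indicator vector of $I$ in $\mathbb{Z}_2^N$). The decisive observation is that the center $Z(\Cl(V,Q))$ is a graded subspace. Indeed, since the grading group is abelian, for a homogeneous $y_h$ the summands $x_g y_h$ and $y_h x_g$ both lie in the component of degree $gh$, and distinct $g$ produce distinct degrees $gh$; hence $x=\sum_g x_g$ satisfies $x y_h = y_h x$ if and only if $x_g y_h = y_h x_g$ for every $g$. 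So $x$ is central if and only if each $x_g$ is, and it suffices to decide which basis elements $v_I$ are central.

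Next I would compute, for a fixed generator $v_j$, the sign $\varepsilon$ in $v_j v_I = \varepsilon\, v_I v_j$. By the anticommutation relation~\eqref{ec:Cl_anticonm}, moving $v_j$ past each factor $v_{i_k}$ with $i_k\neq j$ contributes a $-1$. Writing $r=|I|$, if $j\notin I$ there are $r$ such factors and $\varepsilon=(-1)^r$, whereas if $j\in I$ there are $r-1$ of them and $\varepsilon=(-1)^{r-1}$. Thus $v_I$ is central exactly when $r$ is even for every $j\notin I$ and $r$ is odd for every $j\in I$. If $I$ were a proper nonempty subset, choosing one index inside $I$ and one outside would force $r$ to be simultaneously odd and even; so the only candidates are $I=\emptyset$, giving $v_\emptyset=1$, and $I=\{1,\dots,N\}$, giving $v_1\cdots v_N$.

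Finally, the element $1$ is always central, while $v_1\cdots v_N$ (for which there is no index outside $I$) is central if and only if $r=N$ is odd. Hence $Z(\Cl(V,Q))$ is spanned by $1$ alone when $N$ is even and by $1$ together with $v_1\cdots v_N$ when $N$ is odd, yielding the stated dimensions $1$ and $2$. I expect the only genuinely subtle point to be the reduction to homogeneous elements in the first paragraph; once that is in place the sign bookkeeping is routine and, notably, does not require $Q$ to be nondegenerate, since only the commutation relations~\eqref{ec:Cl_anticonm}, and not the squares~\eqref{ec:Cl_cuadr}, enter the argument.
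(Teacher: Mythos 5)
Your proposal is correct and follows essentially the same route as the paper: you use the $\mathbb{Z}_2^N$-grading of example~\ref{ejem:Cl_grad} to show the center is a graded subspace (via the same abelian-group component-comparison argument), and then determine by sign bookkeeping that among the basis elements $v_I$ only $1$ and $v_1\cdots v_N$ can be central, the latter exactly when $N$ is odd. The only difference is presentational — you make the signs $(-1)^r$ and $(-1)^{r-1}$ explicit where the paper merely notes that one of $v_i$, $v_j$ commutes and the other anticommutes, and you add the (correct) remark that nondegeneracy of $Q$ is never used.
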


\begin{proof}
La graduación del ejemplo \ref{ejem:Cl_grad} es útil de nuevo,
ya que nos permite saltarnos un paso en el cálculo.
En efecto,
$Z(\Cl(V,Q))$ es un subespacio graduado porque
el grupo graduador $\mathbb{Z}_2^N$ es abeliano
($ x = \sum_{g \in G} x_g \in Z(\mathcal{D}) $
si y solo si $ x y_h = y_h x $
para todo $ h \in G $ y todo $ y_h \in \mathcal{D}_h $;
si y solo si $ x_g y_h = y_h x_g $
para todos $ g,h \in G $ y todo $ y_h \in \mathcal{D}_h $;
si y solo si $ x_g \in Z(\mathcal{D}) $
para todo $ g \in G $),
por lo tanto solo tenemos que comprobar qué $v_I$ están en el centro.
Si $v_i$ es parte del producto
$ v_I = v_{i_1} \cdots v_{i_r} $ pero $v_j$ no,
entonces uno de ellos conmuta con $v_I$ y el otro anticonmuta.
Luego los únicos candidatos a pertenecer al centro
son $1$ y $ v_1 \cdots v_N $.
Este último elemento es central si y solo si $N$ es impar.
\end{proof}

\section{Invariante de Arf}

A partir de ahora nos centramos en el caso $ \mathbb{F} = \mathbb{R} $.
Dado un número real $x$ distinto de cero,
denotamos mediante $\sign(x)$ su signo,
que puede tomar los valores $+1$ o $-1$.
Además,
definimos $ \sign(0) = 0 $.
Recordemos que
el invariante de Arf de una aplicación
$ \mu : T \to \{ \pm 1 \} $
definida en un conjunto finito $T$
es el valor que toma más a menudo dicha aplicación:
$ \Arf(\mu) = \sign (
\vert \mu^{-1} (+1) \vert -
\vert \mu^{-1} (-1) \vert
) $.
Una de las consecuencias de las clasificaciones
de los capítulos \ref{cap:art_1} y \ref{cap:art_2}
es el siguiente teorema.

\begin{teorema}\label{teor:Cl_Arf_isom}
Sea $\mathcal{D}$ un álgebra asociativa real y de dimensión finita
cuyo centro $Z(\mathcal{D})$ tiene dimensión $1$ o $2$.
Supongamos que podemos dotar a $\mathcal{D}$
de una graduación de división
con componentes homogéneas de dimensión $1$
y cuyo soporte $T$ es un grupo isomorfo a $\mathbb{Z}_2^N$.
Entonces la ecuación \eqref{ec:sign_cuadr}
define una forma cuadrática $ \mu : T \to \{ \pm 1 \} $
cuyo invariante de Arf
determina el álgebra real $\mathcal{D}$ salvo isomorfismo,
de acuerdo a la siguiente lista.
\begin{itemize}
	\item Si $ \dim Z (\mathcal{D}) = 1 $
	($ \Leftrightarrow N = 2m $),
	entonces:
	\begin{itemize}
		\item $ \Arf(\mu) = +1 $
		implica que
		$ \mathcal{D} \cong M_{2^m} (\mathbb{R}) $.
		\item $ \Arf(\mu) = -1 $
		implica que
		$ \mathcal{D} \cong M_{2^{m-1}} (\mathbb{H}) $.
	\end{itemize}
	\item Si $ \dim Z (\mathcal{D}) = 2 $
	($ \Leftrightarrow N = 2m+1 $),
	entonces:
	\begin{itemize}
		\item $ \Arf(\mu) = +1 $
		implica que
		$ \mathcal{D} \cong M_{2^m} (\mathbb{R})
		\times M_{2^m} (\mathbb{R}) $.
		\item $ \Arf(\mu) = 0 $
		implica que
		$ \mathcal{D} \cong M_{2^m} (\mathbb{C}) $.
		\item $ \Arf(\mu) = -1 $
		implica que
		$ \mathcal{D} \cong M_{2^{m-1}} (\mathbb{H})
		\times M_{2^{m-1}} (\mathbb{H}) $.
	\end{itemize}
\end{itemize}
\end{teorema}

La graduación del ejemplo \ref{ejem:Cl_grad}
nos permite aplicar el teorema \ref{teor:Cl_Arf_isom}
al álgebra de Clifford $\Cl_{p,q}(\mathbb{R})$.
Denotemos mediante $\mu_{p,q}$ a la correspondiente forma cuadrática.
El objetivo de esta sección es calcular
el invariante de Arf de $\mu_{p,q}$
en función de los valores de $p$ y $q$.

\begin{teorema}\label{teor:Cl_period}
Para todos $ p,q \in \mathbb{N} \cup \{0\} $
tenemos los siguientes isomorfismos de álgebras reales.
\begin{enumerate}
	\item $ \Cl_{p+1,q+1}(\mathbb{R})
	\cong \Cl_{p,q}(\mathbb{R})
	\otimes M_2(\mathbb{R}) $.
	\item $ \Cl_{p+2,q}(\mathbb{R})
	\cong \Cl_{q,p}(\mathbb{R})
	\otimes M_2(\mathbb{R}) $.
	\item $ \Cl_{p,q+2}(\mathbb{R})
	\cong \Cl_{q,p}(\mathbb{R})
	\otimes \mathbb{H} $.
\end{enumerate}
\end{teorema}

\begin{proof}
Por el teorema \ref{teor:Cl_Arf_isom},
el primer isomorfismo es equivalente a
$ \Arf(\mu_{p+1,q+1}) = \Arf(\mu_{p,q}) $.
Identificamos
$ \Cl_{p,q}(\mathbb{R}) \subseteq \Cl_{p+1,q+1}(\mathbb{R}) $
de manera que
$ v_1^2 = \dots = v_p^2 = +1 $,
$ v_{p+1}^2 = \dots = v_{p+q}^2 = -1 $,
$ v_{N+1}^2 = +1 $ y $ v_{N+2}^2 = -1 $,
donde $ N = p+q $.
Para hallar el invariante de Arf de $\mu_{p,q}$
calculamos $v_I^2$ para todo $ I \subseteq \{ 1,\dots,N \} $
y contamos la diferencia entre el número de $+1$ y de $-1$.
Análogamente para $\Arf(\mu_{p+1,q+1})$,
pero ahora tenemos cuatro tipos de términos
según $I$ recorre los subconjuntos de $ \{ 1,\dots,N \} $:
\begin{equation*}
\text{(1) los } v_I
\text{; (2) los } v_I v_{N+1}
\text{; (3) los } v_I v_{N+2}
\text{; (4) los } v_I v_{N+1} v_{N+2} \text{.}
\end{equation*}
Como $ ( v_I v_{N+1} )^2 = -( v_I v_{N+2} )^2 $,
los términos del segundo tipo se cancelan con
los términos del tercer tipo.
Además,
$ v_{N+1} v_{N+2} $ conmuta con $v_I$
y $ ( v_{N+1} v_{N+2} )^2 = +1 $,
luego $ ( v_I v_{N+1} v_{N+2} )^2 = v_I^2 $,
y la contribución de los términos del cuarto tipo
es la misma que la de los términos del primer tipo,
que viene dada por $\Arf(\mu_{p,q})$.

Para el segundo isomorfismo,
sean $ u_1 , \dots , u_N $ y
$ v_1 , \dots , v_N , v_{N+1} , v_{N+2} $
sistemas generadores de
$\Cl_{q,p}(\mathbb{R})$ y $\Cl_{p+2,q}(\mathbb{R})$
satisfaciendo la ecuación \eqref{ec:Cl_anticonm}
y tales que $ v_{N+1}^2 = v_{N+2}^2 = +1 $
y $ u_i^2 = - v_i^2 $ para todo $ 1 \leq i \leq N $.
Tenemos los mismos cuatro tipos de términos
en $\Cl_{p+2,q}(\mathbb{R})$ que antes.
Esta vez $ ( v_{N+1} v_{N+2} )^2 = -1 $,
luego los términos del cuarto tipo
se cancelan con los términos del primer tipo.
Además,
$ ( v_I v_{N+1} )^2 = ( v_I v_{N+2} )^2 $,
por lo que la contribución de los términos del tercer tipo
es la misma que la de los términos del segundo tipo.
Si $ \vert I \vert $ es impar,
entonces $ ( v_I v_{N+1} )^2 = - v_I^2 v_{N+1}^2 = u_I^2 $;
mientras que si $ \vert I \vert $ es par,
también $ ( v_I v_{N+1} )^2 = v_I^2 v_{N+1}^2 = u_I^2 $.
Concluimos que $ \Arf(\mu_{p+2,q}) = \Arf(\mu_{q,p}) $,
lo que implica que $ \Cl_{p+2,q}(\mathbb{R})
\cong \Cl_{q,p}(\mathbb{R}) \otimes M_2(\mathbb{R}) $.

La demostración del tercer isomorfismo es análoga a la del segundo,
pero en este caso $ v_{N+1}^2 = v_{N+2}^2 = -1 $,
por lo que obtenemos que
$ \Arf(\mu_{p,q+2}) = - \Arf(\mu_{q,p}) $.
\end{proof}

\begin{lema}\label{lem:Cl_Arf}
$ \Arf(\mu_{p,0}) =
\sign (
\cos ( p \pi / 4 ) + \allowbreak
\sin ( p \pi / 4 )
) $
para todo $ p \in \mathbb{N} \cup \{0\} $,
y
$ \Arf(\mu_{0,q}) =
\sign (
\cos ( - q \pi / 4 ) + \allowbreak
\sin ( - q \pi / 4 )
) $
para todo $ q \in \mathbb{N} \cup \{0\} $.
\end{lema}

\begin{proof}
Empecemos con el caso en el que $p=N$ y $q=0$.
Como $ v_1^2 = \dots = v_N^2 = +1 $,
el valor de $v_I^2$ solo depende de $ \vert I \vert = r $.
Específicamente,
$ v_I^2 = v_{i_1} \cdots v_{i_r} v_{i_1} \cdots v_{i_r}
= (-1)^{\binom{r}{2}} $.
Así,
podemos contar la diferencia entre el número de $+1$ y de $-1$:
\begin{equation*}
\vert \mu_{N,0}^{-1} (+1) \vert -
\vert \mu_{N,0}^{-1} (-1) \vert
= \sum_{r=0}^{N} \binom{N}{r} (-1)^{(r-1)r/2}
= S_0 + S_1 - S_2 - S_3
\end{equation*}
En el último paso hemos abreviado
la escritura de las sumas binomiales
con la notación del lema \ref{lem:sum_bin}
($S_0$, $S_1$, $S_2$ y $S_3$).
Precisamente aplicando dicho lema
obtenemos la fórmula deseada para $\Arf(\mu_{p,0})$.

El caso en el que $p=0$ y $q=N$ es análogo,
pero ahora $ v_I^2 = (-1)^{\binom{r}{2}} (-1)^r $.
Luego obtenemos la siguiente ecuación:
\begin{equation*}
\vert \mu_{0,N}^{-1} (+1) \vert -
\vert \mu_{0,N}^{-1} (-1) \vert
= \sum_{r=0}^{N} \binom{N}{r} (-1)^{(r-1)r/2+r}
= S_0 - S_1 - S_2 + S_3
\end{equation*}
\end{proof}

Calculemos,
mediante un argumento algebraico,
las sumas binomiales $S_0$, $S_1$, $S_2$ y $S_3$
que hemos usado en la demostración del lema \ref{lem:Cl_Arf}.

\begin{lema}\label{lem:sum_bin}
Para todo número entero $N$ mayor o igual que $1$
se satisfacen las siguientes fórmulas binomiales:
\begin{equation*}
S_0 := \binom{N}{0} + \binom{N}{4} + \dots
+ \binom{N}{ 4 \lfloor \frac{N-0}{4} \rfloor + 0 }
= \frac{1}{2} \left( 2^{N-1} + 2^{N/2}
\cos \frac{N\pi}{4} \right)
\end{equation*}
\begin{equation*}
S_1 := \binom{N}{1} + \binom{N}{5} + \dots
+ \binom{N}{ 4 \lfloor \frac{N-1}{4} \rfloor + 1 }
= \frac{1}{2} \left( 2^{N-1} + 2^{N/2}
\sin \frac{N\pi}{4} \right)
\end{equation*}
\begin{equation*}
S_2 := \binom{N}{2} + \binom{N}{6} + \dots
+ \binom{N}{ 4 \lfloor \frac{N-2}{4} \rfloor + 2 }
= \frac{1}{2} \left( 2^{N-1} - 2^{N/2}
\cos \frac{N\pi}{4} \right)
\end{equation*}
\begin{equation*}
S_3 := \binom{N}{3} + \binom{N}{7} + \dots
+ \binom{N}{ 4 \lfloor \frac{N-3}{4} \rfloor + 3 }
= \frac{1}{2} \left( 2^{N-1} - 2^{N/2}
\sin \frac{N\pi}{4} \right)
\end{equation*}
\end{lema}

\begin{proof}
Consideramos el siguiente isomorfismo $\varphi$
de $\mathbb{R}$-álgebras
dado por el teorema chino del resto:
\begin{align*}
\varphi : \mathbb{R}[T] / (T^4-1)
& \longrightarrow \mathbb{R}[X] / (X-1)
\times \mathbb{R}[Y] / (Y+1)
\times \mathbb{R}[Z] / (Z^2+1)
\\ f(T)+(T^4-1)
& \longmapsto ( f(X)+(X-1)
, f(Y)+(Y+1) , f(Z)+(Z^2+1) )
\end{align*}
Si escribimos $ i = Z+(Z^2+1) $
y $ t = T+(T^4-1) $,
entonces $\varphi$ está definido por
$ \varphi(1) = (1,1,1) $,
$ \varphi(t) = (1,-1,i) $,
$ \varphi(t^2) = (1,1,-1) $ y
$ \varphi(t^3) = (1,-1,-i) $.
Recíprocamente,
$\varphi^{-1}$ está determinado por
$ \varphi^{-1}(1,0,0) = (1+t+t^2+t^3)/4 $,
$ \varphi^{-1}(0,1,0) = (1-t+t^2-t^3)/4 $,
$ \varphi^{-1}(0,0,1) = (1-t^2)/2 $ y
$ \varphi^{-1}(0,0,i) = (t-t^3)/2 $.

Por un lado,
podemos calcular $(1+t)^N$ en $\mathbb{R}[t]$:
\begin{equation*}
(1+t)^N = \sum_{r=0}^N \binom{N}{r} t^r
= S_0 1 + S_1 t + S_2 t^2 + S_3 t^3
\end{equation*}
Por otro lado,
podemos aplicar primero $\varphi$ a $(1+t)$,
que da $(2,0,1+i)$,
y elevar el resultado a la $N$-ésima potencia:
\begin{equation*}
(2,0,1+i)^N = (2^N,0,(1+i)^N) = \left( 2^N,0,
2^{N/2} \cos \frac{N\pi}{4} +
2^{N/2} \sin \frac{N\pi}{4} i \right)
\end{equation*}
Aplicando $\varphi^{-1}$ a esta última expresión,
y comparándola con la otra que tenemos,
obtenemos las fórmulas deseadas.
\end{proof}

Juntando el teorema \ref{teor:Cl_period} y el lema \ref{lem:Cl_Arf}
obtenemos el teorema final de este capítulo.
Por supuesto,
la ecuación \eqref{ec:Cl_Arf_pq_mod} es bien conocida,
aunque ni mi director ni yo la habíamos visto escrita antes
en la forma de la ecuación \eqref{ec:Cl_Arf_pq_trigon}.

\begin{teorema}
Para todos $ p,q \in \mathbb{N} \cup \{0\} $
el invariante de Arf de $\mu_{p,q}$
viene dado por las siguientes fórmulas:
\begin{align}
\Arf(\mu_{p,q}) & {} =
\sign \left( \cos \frac{ (p-q) \pi }{4}
+ \sin \frac{ (p-q) \pi }{4} \right)
\label{ec:Cl_Arf_pq_trigon}
\\ & {} =
\begin{cases}
	1	& \text{si } p-q+1 \equiv 1,2,3	\pmod{8} \\
	0	& \text{si } p-q+1 \equiv 0,4	\pmod{8} \\
	-1	& \text{si } p-q+1 \equiv 5,6,7	\pmod{8}
\end{cases}
\label{ec:Cl_Arf_pq_mod}
\end{align}
\end{teorema}

\selectlanguage{spanish}
\chapter*{Conclusiones}
\addcontentsline{toc}{chapter}{Conclusiones}

Estos han sido los principales resultados de mi tesis.
Primero,
la clasificación salvo isomorfismo y equivalencia
de las graduaciones de división
en las álgebras asociativas reales simples.
Recordemos que este resultado completaba
la clasificación salvo isomorfismo
de las graduaciones (no necesariamente de división)
en dichas álgebras.
Segundo,
la clasificación salvo isomorfismo y equivalencia
de las involuciones
en las álgebras asociativas reales simples graduadas de división.
Y tercero,
la clasificación salvo isomorfismo
de las graduaciones
en las álgebras de Lie reales simples centrales clásicas,
excepto en las de tipo $D_4$.

Creo que he sido afortunado en el doctorado,
ya que estas clasificaciones han sido abordables.
El número de casos que aparecen es lo suficientemente grande
para que los problemas sean interesantes,
y lo suficientemente pequeño
para que se puedan resolver en una tesis.
Además hemos obtenido algunos resultados bonitos,
como el teorema \ref{teor:engros},
la $ \mathbb{Z}_2 \times \mathbb{Z}_4 $-graduación de división
en el álgebra $ M_2(\mathbb{R}) \times \mathbb{H} $
del ejemplo \ref{ejem:grad_semisimple},
o la ecuación \eqref{ec:Cl_Arf_pq_trigon}.
También hemos podido aplicar esta teoría de graduaciones
a las álgebras de Clifford.

Por supuesto,
no hemos resuelto todas las cuestiones que
la clasificación de graduaciones en álgebras de Lie reales simples
plantea.
Nuestro modelo de reducir
la clasificación de graduaciones en álgebras de Lie a
la clasificación de graduaciones en álgebras asociativas con involución
cubre una buena parte de los casos cuando el álgebra de Lie es de tipo $D_4$,
pero no todos.
El resto han sido recientemente analizados en \cite{EK2018arx},
por lo que la clasificación en el tipo $D_4$ ya está completa.

El trabajo futuro se divide de manera natural en tres vías.
Uno,
clasificar las graduaciones finas salvo equivalencia
tanto en las álgebras asociativas reales simples
como en las álgebras de Lie reales simples centrales clásicas,
y calcular sus respectivos grupos de Weyl.
Dos,
clasificar las graduaciones en las álgebras de Lie complejas simples,
vistas como álgebras sobre el cuerpo de los números reales.
Y tres,
clasificar las graduaciones en las álgebras de Lie reales simples excepcionales.
Notemos que algunos casos de este tercer punto ya han sido estudiados en
\cite{CDM2010} y \cite{DG2016}.

\selectlanguage{english}
\clearpage
\phantomsection
\addcontentsline{toc}{chapter}{Bibliography}

\end{document}